\title{Isotropic and Coisotropic Subvarieties \\ of Grassmannians}
\author{Kathlén Kohn, James Mathews}
\theoremstyle{definition}
\newtheorem{defn}{Definition}
\newtheorem{example}[defn]{Example}
\newtheorem{defCor}[defn]{Definition/Corollary}	
\newtheorem{remDef}[defn]{Remark/Definition}	
\theoremstyle{plain}
\newtheorem{thm}[defn]{Theorem}
\newtheorem{lem}[defn]{Lemma}
\newtheorem{prop}[defn]{Proposition}
\newtheorem{cor}[defn]{Corollary}
\newtheorem{open}{Question}
\theoremstyle{remark}
\newtheorem{rem}[defn]{Remark}
\newcommand{\bs}[1]{\boldsymbol{#1}}
\newcommand{\pl}{\mathrm{pl}}
\newcommand{\TT}{\mathbb{T}}
\newcommand{\ZZ}{\mathbb{Z}}
\newcommand{\CC}{\mathbb{C}}
\newcommand{\PP}{\mathbb{P}}
\newcommand{\KK}{\mathbb{K}}
\newcommand{\Hom}{\mathrm{Hom}}
\newcommand{\Reg}{\mathrm{Reg}}
\newcommand{\Sing}{\mathrm{Sing}}
\newcommand{\Seg}{\mathrm{Seg}}
\newcommand{\Sec}{\mathrm{Sec}}
\newcommand{\Osc}{\mathrm{Osc}}
\newcommand{\rank}{\mathrm{rank}}
\newcommand{\spann}{\mathrm{span}}
\newcommand{\Gr}{\mathrm{Gr}}
\newcommand{\im}{\mathrm{im}\,}
\newcommand{\codim}{\mathrm{codim}\,}
\newcommand{\isom}{\cong}
\newcommand{\longdashrightarrow}{\,\dashrightarrow}
\newlength\bshft
\def\fakebold#1{\ThisStyle{\ooalign{$\SavedStyle#1$\cr%
  \kern-\bshft$\SavedStyle#1$\cr%
  \kern\bshft$\SavedStyle#1$}}}
\newcommand{\aff}[1]{\fakebold{\fakebold{#1}}}
\begin{document}
\maketitle
\abstract{We generalize the notion of coisotropic hypersurfaces to subvarieties of Grassmannians having arbitrary codimension. 
To every projective variety $X$, Gel'fand, Kapranov and Zelevinsky associate a series of coisotropic hypersurfaces in different Grassmannians.
These include the Chow form and the Hurwitz form of $X$.
Gel'fand, Kapranov and Zelevinsky characterized coisotropic hypersurfaces by a rank one condition on conormal spaces, which we use as the starting point for our generalization.
We also study the dual notion of isotropic varieties by imposing rank one conditions on tangent spaces instead of conormal spaces.
}

\section{Introduction}
The parametrization of varieties with some fixed properties is one of the most important problems in the history of algebraic geometry.
All subvarieties of $\PP^n$ with fixed dimension and degree can be parametrized by their \emph{Chow forms}.
Chow forms of curves in $\PP^3$ were first introduced by Cayley~\cite{cayley2}.
The generalization to arbitrary projective varieties was given by Chow and van der Waerden~\cite{chow}.

For a given variety $X \subset \mathbb{P}^n$ of dimension $k$, projective subspaces of dimension $n-k-1$ have typically no intersection with the variety $X$, but those subspaces that do intersect $X$ form the \emph{Chow hypersurface} of $X$ in the corresponding Grassmannian~$\Gr(n-k-1, \PP^n)$. 
%The \emph{Chow form} of $X$ is the defining polynomial of this hypersurface. It is a unique (up to scaling with constants) polynomial in the coordinate ring of $\Gr(n-k-1, \PP^n)$ and has the same degree as $X$. 
The \emph{Chow form} of $X$ is an element in the coordinate ring of $\Gr(n-k-1, \PP^n)$ that defines the Chow hypersurface.
It is defined uniquely up to a constant factor and can be represented by a polynomial in the Pl\"ucker coordinates (uniquely modulo the Pl\"ucker relations and scaling with constants).
The Chow form has the same degree as $X$ and one can recover the variety $X$ from its Chow form.
Thus, the variety of Chow forms with a fixed degree in the coordinate ring of the Grassmannian $\Gr(n-k-1, \PP^n)$ is a parameter space for the set of all $k$-dimensional subvarieties $X \subset \PP^n$ with that fixed degree.

\emph{Coisotropic hypersurfaces} are generalizations of Chow hypersurfaces, first introduced by Gel'fand, Kapranov and Zelevinsky \cite[Ch.~4]{gkz}.
A hypersurface in a Grassmannian is called \emph{coisotropic} if its conormal vectors are homomorphisms of rank at most one. Their theorem  \cite[Ch.~4, Thm.~3.14]{gkz} is concerned with the variety of projective linear subspaces of dimension $\ell$ which meet $X$ non-transversely, which may be called the $\ell$-th \emph{associated variety} of $X$. Their proof shows that the class of coisotropic hypersurfaces in $\Gr(\ell, \PP^n)$ is exactly the class of varieties, associated to some $X$, which happen to be hypersurfaces. They show moreover that $X$ may be recovered from any one of its associated hypersurfaces.

Technically, the result \cite[Ch.~4, Thm.~3.14]{gkz} holds only in the dimensional range of $\ell$ for which the associated variety is a hypersurface. Namely $n-k-1 \leq \ell \leq \dim X^{\vee}$, see Theorem~\ref{thm:coisotropicHSdimension}. Although the lower bound $\ell\geq n-k-1$ appears in their statement, the upper bound $\ell\leq \dim X^{\vee}$ is missing.

Our generalization clarifies this issue and dispenses with explicit dimensional restrictions on $\ell$ by invoking a new definition of coisotropy for subvarieties of the Grassmannian of arbitrary codimension, wholly compatible with the definition in codimension 1. This leads to a complete series of associated subvarieties of $X$, one in each Grassmannian $\Gr(\ell,\PP^n)$, which are not necessarily hypersurfaces. 

% 
% I know you have intended to emphasize (and state first) the definition of types of subvarieties in the Grassmannian, rather than their origin/generator subvariety in projective space, but doing so makes it difficult to state the GKZ theorem and your own theorem for comparison. Although the "definition first" approach is shown to be good, it is shown to be good mainly by comparison with the previous, normal way of dealing with coisotropy (i.e. as a criterion for when a Grassmann hypersurface is the image of some X under this transformation).
% -JM

We also study the dual notion of \emph{isotropic varieties} by imposing rank-one-conditions on tangent vectors instead of conormal vectors.
We summarize our results in Subsection~\ref{ssec:results}, pose some open questions in Subsection~\ref{ssec:open}, and list related work in Subsection~\ref{ssec:related}.
In Section~\ref{sec:prelim}, we fix notations and collect various preliminary results.
We prove our results on coisotropic varieties in Section~\ref{sec:coiso}.
Finally, we present our proofs on isotropic varieties in Section~\ref{sec:Iso}.

\subsection{Results}
\label{ssec:results}
We aim to generalize the notion of coisotropic hypersurfaces to subvarieties of Grassmannians having codimension larger than one.
For a subvariety with codimension $c$, the conormal space at a smooth point is a $c$-dimensional vector space of homomorphisms (see Subsection~\ref{ssec:grass}).
There are two natural candidates for a notion of \emph{coisotropic subvarieties} which generalize the case of hypersurfaces:
either we require \emph{every} homomorphism in the conormal spaces of the given variety to have rank at most one, or we require each conormal space to be \emph{spanned} by rank one homomorphisms.
If a subvariety of a Grassmannian satisfies the first condition, we call it \emph{strongly coisotropic}.
If it satisfies the latter condition, we say that the subvariety is \emph{coisotropic}.
As we want our latter condition for coisotropy to be Zariski closed, we need to adjust it slightly:
thus, we call a subvariety of a Grassmannian \emph{coisotropic} if each conormal space is in the Zariski closure of the set of all linear spaces that are spanned by homomorphisms of rank one.
To give a more precise definition in Definition~\ref{defn:coisotropy}, we introduce \emph{Grassmann secant varieties} in Subsection~\ref{ssec:grassSec}.

For the stronger of the two notions, we provide a full geometric characterization, which is a generalization of 
the geometric description of coisotropic hypersurfaces by Gel'fand, Kapranov and Zelevinsky~\cite[Ch.~4, Thm.~3.14]{gkz}:
\begin{thm}
\label{thm:stronglyCoisoFULL}
An irreducible subvariety $\Sigma \subset \Gr(\ell, \mathbb{P}^n)$, not necessarily a hypersurface, is strongly coisotropic if and only if 
there is an irreducible variety $X \subset \PP^n$
such that $\Sigma$ is the Zariski closure of the set of all $\ell$-dimensional projective subspaces 
which intersect $X$ at some smooth point non-transversely. 
%\\ \hspace*{2mm}
%\hfill
\emph{(Proof in Subsection~\ref{ssec:stronglyCoisotropic})}
\end{thm}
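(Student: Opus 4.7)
My plan is to prove the two implications separately: the forward direction ($\Leftarrow$) via a direct computation of the conormal space at a generic point of an associated variety, and the backward direction ($\Rightarrow$) via a linear-algebra dichotomy for spaces of rank-$\leq 1$ tensors, which reconstructs $X$ from $\Sigma$.

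For the direction from $X$ to $\Sigma$, I would introduce the universal flag variety
\[
\mathcal{F}_\ell = \{(x, L, H) \in \PP^n \times \Gr(\ell, \PP^n) \times (\PP^n)^\vee : x \in L \subset H\}
\]
with its three coordinate projections. Given irreducible $X \subset \PP^n$, I pull the classical conormal variety $\{(x,H) : x \in X_{\Reg},\ T_xX \subset H\} \subset \PP^n \times (\PP^n)^\vee$ back along the projection forgetting $L$, then push forward to $\Gr(\ell, \PP^n)$; the image is exactly $\Sigma$, since $L$ meets $X$ non-transversely at a smooth $x \in L$ if and only if $T_xX + L$ is contained in some hyperplane $H$. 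At a generic smooth $L \in \Sigma$ with witness $(x,H)$, I differentiate this projection to compute $T_L\Sigma \subset \Hom(\tilde L, \CC^{n+1}/\tilde L)$. In coordinates adapted to the flag $\tilde x \subset \tilde L \subset \tilde H$, the tangent space is cut out by linear equations of the form $\alpha(\phi(\tilde x))=0$ where $\alpha$ ranges over the descent to $(\CC^{n+1}/\tilde L)^*$ of the conormal $N^*_xX$. Dualizing, $N^*_L\Sigma$ is spanned by simple tensors $\tilde x \otimes \alpha$, all of rank one, and strong coisotropy follows.

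For the converse, let $\Sigma$ be irreducible and strongly coisotropic. At each smooth $L \in \Sigma$ the conormal $N^*_L\Sigma$ lies in $T^*_L\Gr \isom \tilde L \otimes (\CC^{n+1}/\tilde L)^*$ and consists of rank-$\leq 1$ tensors. The structural lemma I plan to use asserts: any such subspace is contained either (A) in $\tilde v_0 \otimes (\CC^{n+1}/\tilde L)^*$ for some $\tilde v_0 \in \tilde L$, or (B) in $\tilde L \otimes \alpha_0$ for some $\alpha_0 \in (\CC^{n+1}/\tilde L)^*$. This follows by fixing a nonzero $\tilde v_1 \otimes \alpha_1 \in N^*_L\Sigma$ and noting that, for any other rank-one element $\tilde v \otimes \alpha$, the sum can have rank $\leq 1$ only when $\tilde v \parallel \tilde v_1$ or $\alpha \parallel \alpha_1$; the two linear subspaces cut out by these conditions cover $N^*_L\Sigma$, and over $\CC$ one of them must equal the whole. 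Since $\Sigma$ is irreducible, one of cases (A), (B) prevails on a dense open subset. In Case (A) I set $X$ to be the Zariski closure of $\{[\tilde v_0(L)] \in L : L \in \Sigma \text{ generic}\}$; in Case (B) I build $X^\vee$ analogously as the closure of the $[\alpha_0(L)] \in (\PP^n)^\vee$ and set $X = (X^\vee)^\vee$ by biduality.

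It remains to verify that $\Sigma$ equals the associated variety of this $X$. Having the forward direction in hand, I compute the associated variety of $X$ and compare it with $\Sigma$. The main obstacle is showing that $[\tilde v_0(L)]$ (respectively $[\alpha_0(L)]$) genuinely witnesses a non-transverse intersection of $L$ with $X$ at a smooth point: this requires identifying the span of rank-one tensors $\tilde v_0(L) \otimes \alpha$ in $N^*_L\Sigma$ with a suitable quotient of the conormal $N^*_{[\tilde v_0(L)]}X$. Dimension counting and a closedness argument handle the generic stratum, after which irreducibility of both $\Sigma$ and the associated variety of $X$ forces them to coincide.
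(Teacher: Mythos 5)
Your overall architecture mirrors the paper's (a direct conormal computation for one implication, and the dichotomy ``common kernel line or common image functional'' for spaces of rank-$\leq 1$ homomorphisms to reconstruct $X$ for the other; your covering-by-two-subspaces argument is a correct proof of that dichotomy, which is the paper's Lemma~\ref{lem:alphaBetaSegre}). However, there are two genuine gaps. First, in the direction from $X$ to $\Sigma$, your claim that $T_{L}\Sigma$ is cut out by the equations $\alpha(\varphi(\aff{x}))=0$ for a single witness $x$ — so that $N^{\ast}_{L}\Sigma$ is spanned by $\aff{x}\otimes\alpha$ with $\aff{x}$ fixed — fails whenever the contact locus of a general $L\in\mathcal{G}_\ell(X)$ is positive-dimensional, which happens precisely for $\ell>\dim X^{\vee}$. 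There a general $L$ lies in a single tangent hyperplane $H$ but touches $X$ along a positive-dimensional family of points, $\aff{\TT_{X,x}}+\aff{L}$ is a hyperplane, so your formula would give a one-dimensional conormal space, whereas $\codim\mathcal{G}_\ell(X)=\ell+1-\dim X^{\vee}$ can be arbitrarily large; the true conormal space is of the form $(\text{varying image})\otimes\alpha_0$ with the \emph{functional} fixed, not the point. The conclusion (strong coisotropy) is still true, but your computation does not establish it in that regime; the paper sidesteps this by proving the statement only for $\ell\leq\codim X-1$ (Proposition~\ref{prop:associatedVarsAreStronglyCoisotropic}), quoting the hypersurface case from Gel'fand--Kapranov--Zelevinsky, and transporting the remaining range through the duality $\mathcal{G}_\ell(X)^{\perp}=\mathcal{G}_{n-\ell-1}(X^{\vee})$ (Lemma~\ref{lem:extendedDuality}) — a lemma you also implicitly need in your Case (B) when you pass from the reconstructed $X^{\vee}$ back to $X$.

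Second, and more seriously, in the converse direction the sentence ``dimension counting and a closedness argument handle the generic stratum, after which irreducibility \ldots forces them to coincide'' conceals the hardest part of the proof. Having built $X$ as the closure of the points $x_L$ and shown $\TT_{X,x_L}\subset P_L$ (so that $\Sigma\subset\mathcal{G}_\ell(X)$), irreducibility gives $\Sigma=\mathcal{G}_\ell(X)$ only once you know $\dim\mathcal{G}_\ell(X)=\dim\Sigma$, and this forces you to prove that $\dim X$ equals exactly $n-\codim\Sigma-\ell$. A priori the construction only bounds $\dim X$ by $n-\codim\Sigma$, and if $\dim X$ were larger than $n-\codim\Sigma-\ell$ then $\mathcal{G}_\ell(X)$ would be strictly bigger than $\Sigma$. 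The paper's Theorem~\ref{thm:stronglyCoisotropicBeta} devotes two separate contradiction arguments to excluding $\dim X=n-\codim\Sigma$ and the intermediate range $n-\codim\Sigma-\ell<\dim X<n-\codim\Sigma$, each via an explicit parametrization of an auxiliary variety $\Sigma'$ or $\Sigma''$ containing $\Sigma$ and a dimension estimate that contradicts $\codim\Sigma\geq 2$. Without some version of these estimates your reconstruction argument is incomplete: you would need to supply them (or an equivalent) before the final ``irreducibility forces equality'' step is legitimate.
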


\begin{example}
\label{ex:segreStart}
To illustrate the extent of Theorem~\ref{thm:stronglyCoisoFULL} and compare it with~\cite[Ch.~4, Thm.~3.14]{gkz}, 
we consider the four-dimensional Segre variety $X \subset \PP^7$ consisting of $2 \times 4$-matrices of rank one. 
For every $\ell \in \lbrace 0, 1, \ldots, 6 \rbrace$, we can associate a strongly coisotropic variety $\mathcal{G}_\ell(X) \subset \Gr(\ell, \mathbb{P}^7)$ to $X$:
$$ \mathcal{G}_\ell(X)  := \left\lbrace L \in \Gr(\ell, \PP^7) \mid \exists x \in X \cap L : L \text{ intersects } X \text{ at } x \text{ non-transversely}  \right\rbrace. $$ 
As $X$ is smooth, $\mathcal{G}_\ell(X)$ is Zariski-closed.
We will see in Theorem~\ref{thm:coisotropicHSdimension} 
that the variety $\mathcal{G}_\ell(X)$ is a hypersurface in $\Gr(\ell, \mathbb{P}^7)$ if and only if $2 \leq \ell \leq 4$. Note that the upper bound here is $\dim X^{\vee}=4$ (this Segre variety is self-dual).
These are the three coisotropic hypersurfaces of $X$ governed by~\cite[Ch.~4, Thm.~3.14]{gkz}.
Theorem~\ref{thm:stronglyCoisoFULL} comprises all associated subvarieties $\mathcal{G}_0(X), \mathcal{G}_1(X), \ldots, \mathcal{G}_6(X)$ of Grasmannians. We will get back to this example in Example~\ref{ex:segreContinued}.
$\hfill \diamondsuit$
\end{example}

The strongly coisotropic varieties associated to a projective variety $X$ in $\PP^n$ are the strongly coisotropic varieties associated to its projectively dual variety $X^\vee$ in $(\PP^n)^\ast$.
This relation is observed for coisotropic hypersurfaces in \cite[Thm.~20]{coisotropicHS}.
\begin{lem}
\label{lem:dual}
Under the isomorphism $ \Gr(\ell, \mathbb{P}^n) \cong  \Gr(n-\ell-1, (\mathbb{P}^n)^\ast)$, 
the strongly coisotropic variety in $\Gr(\ell, \mathbb{P}^n)$ associated to an underlying variety $X \subset \PP^n$
is the strongly coisotropic variety in $\Gr(n-\ell-1, (\mathbb{P}^n)^\ast)$ associated to the dual variety $X^\vee \subset (\PP^n)^\ast$.
\hfill
\emph{(Proof in Subsection~\ref{ssec:stronglyCoisotropic})}
\end{lem}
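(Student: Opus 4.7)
The plan is to package the characterization from Theorem~\ref{thm:stronglyCoisoFULL} as a projection of an incidence variety built over the conormal variety of $X$, and then invoke the biduality theorem to swap $X$ with $X^\vee$.

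First I would reformulate the strongly coisotropic variety $\mathcal{G}_\ell(X)$ in terms of the conormal variety $N_X \subset \PP^n \times (\PP^n)^\ast$, defined as the Zariski closure of the set of pairs $(x,H)$ with $x \in \Reg(X)$ and $T_x X \subset H$. Consider the incidence variety
\[
  \mathcal{I}_X \;=\; \bigl\{ (x, H, L) \in N_X \times \Gr(\ell, \PP^n) \,:\, x \in L \text{ and } L \subset H \bigr\}.
\]
I would verify that $\mathcal{G}_\ell(X)$ equals the Zariski closure of the image of $\mathcal{I}_X$ in $\Gr(\ell, \PP^n)$. In one direction: if $L \in \Gr(\ell, \PP^n)$ meets $X$ non-transversely at a smooth point $x$, then $T_x X + L \subsetneq \PP^n$, so some hyperplane $H$ contains both, giving $(x, H, L) \in \mathcal{I}_X$. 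Conversely, if $(x, H, L) \in \mathcal{I}_X$ with $x \in \Reg(X)$, the inclusions $T_x X \subset H$ and $L \subset H$ witness the non-transverse intersection at $x$; taking closures handles the remaining points.

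Next I would invoke the biduality theorem: under the canonical identification of $(\PP^n)^{\ast\ast}$ with $\PP^n$, the swap map $\sigma$ on $\PP^n \times (\PP^n)^\ast$ sends $N_X$ isomorphically onto $N_{X^\vee}$. At the same time, the isomorphism $\Phi\colon \Gr(\ell, \PP^n) \to \Gr(n-\ell-1, (\PP^n)^\ast)$ given by $L \mapsto L^\perp$ transports the incidence conditions as $x \in L \iff L^\perp \subset x^\perp$ and $L \subset H \iff H \in L^\perp$. Combining these, the product isomorphism $\sigma \times \Phi$ carries $\mathcal{I}_X$ onto $\mathcal{I}_{X^\vee}$, with the roles of ``point lies in subspace'' and ``subspace lies in hyperplane'' interchanged. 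Projecting to the two Grassmannians and taking closures yields $\Phi(\mathcal{G}_\ell(X)) = \mathcal{G}_{n-\ell-1}(X^\vee)$.

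The main subtlety is the smoothness hypothesis in Theorem~\ref{thm:stronglyCoisoFULL}: the incidence description privileges smooth points of $X$, whereas biduality does not send $\Reg(X)$ literally onto $\Reg(X^\vee)$. This is why I would formulate everything in terms of Zariski closures of open strata of $N_X$ and $N_{X^\vee}$; on a common dense open subset of $N_X = \sigma(N_{X^\vee})$ both projections land in the smooth loci of their respective varieties, and the possible discrepancies on lower-dimensional strata are absorbed into the closure. This packaging is the only real obstacle; once it is in place, the lemma follows from biduality essentially formally.
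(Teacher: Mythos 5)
Your proposal is correct and follows essentially the same route as the paper: the paper's proof (Lemma~\ref{lem:extendedDuality}) likewise writes $\mathcal{G}_\ell(X)$ as the closure of the union of the fibers $\mathcal{G}_\ell(x,y)=\{L : x\in L\subset y^\vee\}$ over the open stratum $\mathcal{U}_{X,X^\vee}$ of the conormal variety, observes $\mathcal{G}_\ell(x,y)^\perp=\mathcal{G}_{n-\ell-1}(y,x)$, and applies biduality to swap $\mathcal{U}_{X,X^\vee}$ with $\mathcal{U}_{X^\vee,X}$ — your incidence variety $\mathcal{I}_X$ is just this fibered description packaged globally. The closure subtlety you flag is handled in the paper in exactly the way you suggest, by restricting to pairs with both $x\in\Reg(X)$ and $y^\vee\in\Reg(X^\vee)$ before taking Zariski closures.
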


For the weaker notion of coisotropy, we first observe two key properties:
\begin{prop}
\label{prop:keyPropCoiso}
\emph{(Proofs in Section~\ref{sec:coiso})}
\begin{enumerate}
\item Every subvariety of $\Gr(\ell, \PP^n)$ of dimension at most $n-1$ is coisotropic.
\item If two coisotropic subvarieties $\Sigma_1, \Sigma_2 \in \Gr(\ell, \PP^n)$ intersect generically transversely, then every irreducible component of $\Sigma_1 \cap \Sigma_2$ is coisotropic.
\end{enumerate}
\end{prop}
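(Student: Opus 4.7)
For part (1), the plan is to work at any smooth point $L \in \Sigma$, where the cotangent space of $\Gr(\ell,\PP^n)$ is canonically identified with $\Hom(L,\PP^n/L)$ and the rank-one elements projectivize to the Segre variety $\PP^\ell \times \PP^{n-\ell-1}$ of projective dimension $n-1$ inside a projective space of dimension $(\ell+1)(n-\ell)-1$. If $c$ denotes the codimension of $\Sigma$, then the conormal space at $L$ is a $(c-1)$-plane in this projective space, and a standard incidence computation shows that when $(c-1)+(n-1)\ge (\ell+1)(n-\ell)-1$, i.e.\ precisely when $\dim\Sigma \le n-1$, a generic $(c-1)$-plane already meets the Segre in a positive-dimensional family of rank-one elements. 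Hence in this range the $c$-th Grassmann secant of the Segre exhausts the full Grassmannian of $(c-1)$-planes, so \emph{every} $c$-dimensional cotangent subspace of $\Gr(\ell,\PP^n)$, and in particular every conormal space of $\Sigma$, lies in it. Thus $\Sigma$ is coisotropic.

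For part (2), I plan to pick a generic smooth point $L$ of an irreducible component $Z$ of $\Sigma_1\cap\Sigma_2$. Generic transversality ensures $L$ is a smooth point of both $\Sigma_i$ with $T_L\Sigma_1+T_L\Sigma_2 = T_L\Gr(\ell,\PP^n)$; dualizing, $N_L^\ast Z = W_1\oplus W_2$ where $W_i := N_L^\ast \Sigma_i$ is a $c_i$-dimensional subspace, and $\codim Z = c_1+c_2$. By the coisotropy of $\Sigma_i$, each $W_i$ lies in the closure of the locus $S_{c_i}$ of $(c_i-1)$-planes spanned by rank-one homomorphisms, so I can take curves $W_i(t) \in S_{c_i}$ tending to $W_i$ as $t\to 0$. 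The key observation is that the direct-sum condition is open in $\Gr \times \Gr$: since $W_1\cap W_2 = 0$ at $t=0$, one has $W_1(t)\cap W_2(t) = 0$ for generic $t$, and the union of the $c_1$ rank-one generators of $W_1(t)$ with the $c_2$ rank-one generators of $W_2(t)$ then spans $W_1(t)\oplus W_2(t)$, placing it in $S_{c_1+c_2}$. Passing to the limit $t\to 0$ puts $N_L^\ast Z$ in the closure of $S_{c_1+c_2}$, which is the coisotropy condition in codimension $c_1+c_2$.

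To finish, I would invoke the fact that the conormal spaces of $Z$ vary algebraically over $Z^{\mathrm{sm}}$ and that the condition of lying in the Grassmann secant is Zariski-closed in the corresponding Grassmann bundle, so verification at a dense open choice of $L$ upgrades to every smooth point of $Z$. The main obstacle I anticipate is the limit argument in part~(2): one must select the curves $W_i(t)$ and their rank-one generators so that linear independence is maintained as $t\to 0$, but this reduces to the openness of linear independence combined with the $t=0$ transversality $W_1\cap W_2 = 0$. Part~(1) is essentially a dimension count once the Segre picture of the rank-one locus is in place, presumably available from Subsection~\ref{ssec:grassSec}.
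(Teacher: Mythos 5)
Your part (2) is correct and is essentially the paper's argument: generic transversality gives $N^\ast_{\Sigma,L} = N^\ast_{\Sigma_1,L} + N^\ast_{\Sigma_2,L}$ at a general point $L$ of a component, and your degeneration of the two summands along curves inside the loci of rank-one-spanned subspaces, keeping the direct-sum condition open, is exactly the content of the paper's Lemma~\ref{lem:grSecSum} (that the span of an element of $\Sec_{k_1}(X)$ and an element of $\Sec_{k_2}(X)$ lies in $\Sec_{\dim(L_1+L_2)}(X)$), which you have simply inlined. The upgrade from a general $L$ to every smooth $L$ via closedness of the secant condition is also how the paper concludes.

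Part (1), however, has a genuine gap at its central step. You compute correctly that $\dim\Sigma\leq n-1$ is equivalent to $(c-1)+(n-1)\geq(\ell+1)(n-\ell)-1$, so that a generic $(c-1)$-plane has nonempty intersection with the Segre variety $\Seg(\KK^{n+1}/\aff{L},\aff{L})$. But membership in $\Sec_{c-1}(\Seg(\KK^{n+1}/\aff{L},\aff{L}))$ requires the plane to be \emph{spanned} by its intersection with the Segre (or to be a limit of such planes), and ``meets the Segre in a nonempty set'' does not imply ``is spanned by that set'': the intersection could a priori span a proper subspace. In the boundary case $\dim\Sigma=n-1$ the expected intersection is in fact zero-dimensional, consisting of $\deg\Seg=\binom{n-1}{\ell}$ points, and one would still have to argue that these points are in linearly general position. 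The paper closes exactly this gap with Lemma~\ref{lem:grSecSpans}, which rests on the Ciliberto--Cools lower bound $\dim\Sec_k(X)\geq(k+1)\dim X$ whenever $\Sec_k(X)\neq\Gr(k,\PP^n)$; applied with $k=c-1\geq\codim\Seg(\KK^{n+1}/\aff{L},\aff{L})=\ell(n-\ell-1)$ this forces $\Sec_{c-1}$ of the Segre to be the whole Grassmannian of $(c-1)$-planes. So your numerical threshold is the right one, but the justification must be the dimension bound on the Grassmann secant variety (or a general-position argument for linear sections of the Segre), not the incidence count you give.
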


\begin{example}
\label{ex:coisotropicInSmallGrass}
Using the first part of Proposition~\ref{prop:keyPropCoiso}, we understand all coisotropic subvarieties of the four-dimensional Grassmannian of lines in $\mathbb{P}^3$:
an irreducible subvariety of $\Gr(1, \PP^3)$ is coisotropic if and only if it is a point, a curve, a surface, or a coisotropic hypersurface.
It was already known by Cayley~\cite{cayley2} that a hypersurface in $\Gr(1,\PP^3)$ is coisotropic if and only if it either consists of all lines intersecting a curve (so it is the Chow hypersurface of this curve), or of all lines tangent to a surface (in which case it is known as the \emph{Hurwitz hypersurface} of the surface, see~\cite{hurwitz}).
Note that this classification of coisotropic hypersurfaces in $\Gr(1, \PP^3)$ is a special case of Theorem~\ref{thm:stronglyCoisoFULL}. 
$\hfill \diamondsuit$
\end{example}

\begin{rem}
\label{rem:selfintersection}
Similarly to the second part of Proposition~\ref{prop:keyPropCoiso}, self-intersections of coisotropic hypersurfaces are coisotropic as well.
For instance, for a $k$-dimensional variety $X \subset\PP^n$, the self-intersection of its Chow hypersurface is the Zariski closure of the set of all projective subspaces of dimension $n-k-1$ which intersect $X$ at two distinct points.
$\hfill \diamondsuit$
\end{rem}

We also show that we need the Zariski closure in the definition of coisotropic varieties.
Indeed, we present a family of geometrically meaningful examples of coisotropic varieties, whose conormal spaces at general points are \emph{not} spanned by homomorphisms of rank one:
for a hypersurface $X \subset \PP^n$ and an integer $m$, we denote by $\mathcal{L}_m(X) \subset \Gr(1, \PP^n)$ the Zariski closure of the set of all lines which intersect $X$ at some smooth point with multiplicity $m$.
\begin{thm}
\label{thm:higherContactLinesFULL}
Let $X \subset \PP^n$ be a general hypersurface of degree at least three
and let $m \in \lbrace 2, 3, \ldots, \min(n, \deg X) \rbrace$.
\begin{enumerate}
\item The subvariety $\mathcal{L}_{m}(X) \subset \Gr(1, \PP^n)$ has codimension $m-1$.
\item The projectivized conormal space at a general point of $\mathcal{L}_{m}(X)$ contains exactly one homomorphism of rank one.
\item Moreover, this projectivized conormal space intersects the Segre variety of all homomorphisms of rank one with multiplicity $m-1$.
\item In particular, $\mathcal{L}_m(X)$ is coisotropic.
$\hfill$ \emph{(Proof in Subsection~\ref{ssec:higherContact})}
\end{enumerate}
\end{thm}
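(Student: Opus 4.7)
I work locally near a general point $L \in \mathcal{L}_m(X)$, which by genericity has a unique $m$-fold contact point $p \in L \cap X$. After a projective change of coordinates I may assume in affine coordinates that $p = 0$, $L$ is the $x_1$-axis, and the tangent hyperplane $T_pX$ is $\{x_n = 0\}$. Then locally $X = \{x_n = h(x_1, \ldots, x_{n-1})\}$ with $h(0) = 0$, $dh(0) = 0$, and $h(x_1, 0, \ldots, 0) = a_m x_1^m + O(x_1^{m+1})$ for some $a_m \neq 0$; for $X$ general the remaining Taylor coefficients of $h$ are as generic as the hypotheses $m \leq n$ and $m \leq \deg X$ allow. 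Parametrize nearby lines as $L_{\alpha,\beta}(s) = (s, \alpha_2 + \beta_2 s, \ldots, \alpha_n + \beta_n s)$ in affine coordinates $(\alpha, \beta) \in \CC^{2(n-1)}$. The condition of contact order $\geq m$ at some $s = s_0$ is the simultaneous vanishing of the first $m$ Taylor coefficients $d_0, \ldots, d_{m-1}$ in $s$ of $h(s, \alpha_2 + \beta_2 s, \ldots) - (\alpha_n + \beta_n s)$ expanded around $s_0$. A direct Jacobian computation at the base point (using $a_m \neq 0$ for the $s_0$-derivative of $d_{m-1}$ and genericity of $h$ for the rest) shows these $m$ equations are independent, cutting out a smooth incidence variety of dimension $2n - 1 - m$; its projection onto the $(\alpha, \beta)$-factor is generically injective, giving codimension $m - 1$ as in (1).

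\textbf{Parts (2) and (3).} Since $d_{m-1}$ is the only Jacobian row with a nonzero $s_0$-component (namely $m a_m$), the conormal space $N^*_L \mathcal{L}_m(X)$ is the span of the $(\alpha, \beta)$-projections of $d_0, d_1, \ldots, d_{m-2}$. Identifying $T^*_L \Gr(1, \PP^n)$ with $(n-1) \times 2$ matrices, the general conormal element $M_\omega = \sum_{k=0}^{m-2} \lambda_k M_k$ has last row $(-\lambda_0, -\lambda_1)$ and row $i < n$ equal to $(P_i(\lambda), Q_i(\lambda))$, where $P_i, Q_i$ are explicit linear forms in $\lambda_1, \ldots, \lambda_{m-2}$ whose coefficients are the Taylor coefficients $b_{i,k}$ of $\partial_{x_i} h$ along $L$. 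For (2): the rank-one condition (all rows proportional) combined with $m \leq n$ and generic non-degeneracy of the matrix $(b_{i,k})$ forces $\lambda_1 = \cdots = \lambda_{m-2} = 0$, leaving $\lambda_0 M_0$ as the unique rank-one element up to scalar. For (3): in the affine chart $\lambda_0 = 1$, a Koszul identity shows that all $2 \times 2$ minors lie in the ideal $I$ generated by the $n - 2$ row-$n$ minors $\lambda_1 P_i - Q_i$. Iteratively solving $Q_i = \lambda_1 P_i$ in the completed local ring yields $\lambda_k \equiv \lambda_1^k \pmod{\lambda_1^{k+1}}$ for $k = 2, \ldots, m - 2$, and the $n - m + 1$ surplus equations collapse to the single relation $u \cdot \lambda_1^{m-1} = 0$ with $u$ a generic unit. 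The local ring is therefore $\CC[\lambda_1]/(\lambda_1^{m-1})$, of length exactly $m - 1$.

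\textbf{Part (4) and main obstacle.} Part (4) follows from (3) by a standard deformation argument: in $\Gr(m - 1, T^*_L \Gr(1, \PP^n))$, a generic small perturbation of the projectivized conormal $\PP^{m-2}$ meets the Segre variety transversely in $m - 1$ distinct rank-one points, which are in general position and hence span the perturbed subspace. Passing to the limit shows that the original $\PP^{m-2}$ lies in the Zariski closure of spans of $m - 1$ rank-one elements, that is, in the Grassmann secant, so $\mathcal{L}_m(X)$ is coisotropic. The principal obstacle is the scheme-theoretic length computation in (3): when $m < n$ the two subvarieties have excess intersection (of negative expected dimension), so the length $m - 1$ must be extracted from local algebra rather than from degree or dimension counts. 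The delicate step is verifying that the generic Taylor coefficients of $h$ along $L$ contribute exactly the right torsion in $\lambda_1$—no more and no less—to collapse the local ring to $\CC[\lambda_1]/(\lambda_1^{m-1})$.
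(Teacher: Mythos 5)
Your parts (1)--(3) are essentially correct and run parallel to the paper's own computation: both arguments choose affine coordinates adapted to the flag $p_L \in L \subset \TT_{X,p_L}$, Taylor-expand the defining equation along $L$, and exhibit the conormal space as an explicit span of matrices whose entries are Taylor coefficients of the hypersurface along $L$. The paper organizes this via the cones $\mathcal{C}_{k,p_L}(X)$ and the fibers of the differential $\Phi_L\colon T_{\mathcal{L}_m(X),L}\to T_{X,p_L}$ over the flag of their tangent spaces, whereas you use an incidence variety over the contact parameter $s_0$ and read off $N^\ast_{\mathcal{L}_m(X),L}$ as the span of the $(\alpha,\beta)$-gradients of $d_0,\dots,d_{m-2}$; these are equivalent. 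For the multiplicity in (3) you take a genuinely different route: a direct length computation, where for generic $(b_{i,k})$ the minor ideal in the chart $\lambda_0=1$ reduces to $(\lambda_2-\lambda_1^2,\ \lambda_3-\lambda_1\lambda_2,\ \dots,\ \lambda_1\lambda_{m-2})$ and the local ring is $\KK[\lambda_1]/(\lambda_1^{m-1})$. The paper instead notes that $N^\ast_{\mathcal{L}_m(X),L}$ annihilates $\aff{\TT_L\mathcal{C}_{m,p_L}(X)}/\aff{L}$, restricts to $\PP\bigl(\Hom(\KK^{n+1}/\aff{\TT_L\mathcal{C}_{m,p_L}(X)},\aff{L})\bigr)\cong\PP^{2m-3}$, where the induced Segre variety has codimension $m-2$ and degree $m-1$, and concludes by B\'ezout. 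Your version is more self-contained; the paper's is shorter and, crucially, sets up the final step correctly.

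The genuine gap is in part (4). You perturb $\PP(N^\ast_{\mathcal{L}_m(X),L})$ generically inside $\Gr(m-1,T^\ast_L\Gr(1,\PP^n))$ and claim the perturbed $\PP^{m-2}$ meets the Segre variety transversely in $m-1$ points. But $\Seg(\KK^{n+1}/\aff{L},\aff{L})$ has codimension $n-2$ in $\PP^{2n-3}$, so for $m<n$ a generic $(m-2)$-plane misses it entirely, and even a generic plane in the locus of those meeting it meets it in a single reduced point; no limit of such configurations produces $m-1$ spanning points. This is exactly the excess-intersection phenomenon you flag yourself in the length computation, and it invalidates the perturbation as stated (it is fine only in the boundary case $m=n$). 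The fix is to perturb inside a family where the intersection stays proper, and the needed ingredient is already in your matrices: every conormal element kills the fixed $(n-m)$-dimensional space $\aff{\TT_L\mathcal{C}_{m,p_L}(X)}/\aff{L}$ (each row $(P_i,Q_i)$ lies in the span of the $m-2$ generic vectors $b_{\cdot,1},\dots,b_{\cdot,m-2}$), so $\PP(N^\ast_{\mathcal{L}_m(X),L})$ sits in a $\PP^{2m-3}$ in which the induced Segre $\Seg(\KK^{m-1},\KK^2)$ has codimension exactly $m-2$. Within that subspace your perturbation argument becomes valid, or one can simply invoke Lemma~\ref{lem:grSecSpans} with $k=m-2=\codim\Seg(\KK^{m-1},\KK^2)$ to conclude $\PP(N^\ast_{\mathcal{L}_m(X),L})\in\Sec_{m-2}(\Seg(\KK^{n+1}/\aff{L},\aff{L}))$.
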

If $m \geq 3$, then $\mathcal{L}_m(X)$ is not a hypersurface in $\Gr(1, \PP^n)$.
By the second part of Theorem~\ref{thm:higherContactLinesFULL}, the conormal spaces of $\mathcal{L}_m(X)$ are \emph{not} spanned by rank one homomorphisms.
However, the third part of Theorem~\ref{thm:higherContactLinesFULL} implies that each conormal space at a smooth point of $\mathcal{L}_m(X)$ is contained in the Zariski closure of the set of all linear spaces which are spanned by homomorphisms of rank one.
Hence, the variety $\mathcal{L}_m(X)$ is coisotropic.

\bigskip 
 
We also introduce a dual notion to coisotropic varieties by imposing rank-one-conditions on the tangent vectors of a subvariety of a Grassmannian instead of on its conormal vectors.
We say that a subvariety of a Grassmannian is \emph{strongly isotropic} if \emph{every} homomorphism in the tangent spaces at its smooth points has rank at most one. 
We call the subvariety \emph{isotropic} if the tangent spaces at its smooth points are \emph{spanned} by rank one homomorphisms, or if they are in the Zariski closure of the set of all linear spaces that are spanned by homomorphisms of rank one. 

Note that both definitions agree for curves in Grassmannians.
In particular, the notion of an \emph{isotropic curve} is dual to the notion of a \emph{coisotropic hypersurface}.
We provide a full geometric characterization of all isotropic curves:
\begin{thm}
\label{thm:isoCurvesFULL}
An irreducible curve $\Sigma \subset \Gr(\ell, \PP^n)$ is isotropic if and only if 
there is a subspace $P \subset \PP^n$ (together with the projection $\pi_P: \mathbb{P}^n \dashrightarrow \PP^{n- \dim P - 1}$ from $P$) and an irreducible curve $C \subset \PP^{n - \dim P -1}$ such that 
$\Sigma$ consists of the preimages of the $(\ell - \dim P - 1)$-dimensional osculating spaces of $C$ under the projection $\pi_P$. \\
$\hspace*{2mm}$ $\hfill$ \emph{(Proof in Subsection~\ref{ssec:isotropicCurves})}
\end{thm}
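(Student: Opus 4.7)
My plan is to prove both directions separately, using that isotropy of a curve just means its one-dimensional tangent line at each smooth point is spanned by a rank-one homomorphism $\hat L \to V/\hat L$, where $V = \CC^{n+1}$ and $\hat L$ is the $(\ell+1)$-dimensional vector-space lift of $L$.

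For the ``if'' direction, given $P \subset \PP^n$ and $C \subset \PP^{n - \dim P - 1}$, I would parametrize a local smooth branch of $C$ by a lift $\gamma(t) \in V/\hat P$, further lifted arbitrarily to $V$. Combined with any constant basis of $\hat P$, the vectors $\gamma(t), \gamma'(t), \ldots, \gamma^{(\ell - \dim P - 1)}(t)$ span the vector subspace $\hat L(t)$ lifting the corresponding point of $\Sigma$. Taking $t$-derivatives, every basis vector's derivative stays in $\hat L(t)$ except $\gamma^{(\ell - \dim P - 1)}(t)' = \gamma^{(\ell - \dim P)}(t)$, which for general $C$ leaves $\hat L(t)$. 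This exhibits the tangent vector of $\Sigma$ as a rank-one map, with kernel $\hat P \oplus \spann(\gamma, \ldots, \gamma^{(\ell - \dim P - 2)})$ and image direction $\gamma^{(\ell - \dim P)}(t) \bmod \hat L(t)$, so $\Sigma$ is isotropic.

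For the ``only if'' direction, let $\Sigma$ be irreducible isotropic. I would first set $\hat P := \bigcap_{L \in \Sigma} \hat L$ (over a dense smooth open subset) and $P := \PP(\hat P)$. Quotienting by $\hat P$ yields an isotropic curve $\Sigma'$ in $\Gr(\ell - \dim P - 1, \PP^{n - \dim P - 1})$ with trivial common intersection; isotropy survives the quotient because $\hat P$ automatically lies in every tangent kernel along $\Sigma$, so the rank-one tangent descends to a rank-one map on the quotient. Next, I would construct a ``kernel curve'' $\Sigma^{(1)}$ as the image of the morphism $L \mapsto \ker \phi_L$ from the smooth locus of $\Sigma'$ into $\Gr(\ell - \dim P - 2, \PP^{n - \dim P - 1})$, where $\phi_L$ denotes the rank-one tangent at $L$. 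Choosing a moving basis $v_0(t), v_1(t), \ldots, v_{\ell - \dim P - 1}(t)$ of $\hat L(t)/\hat P$ with $\ker \phi_{L(t)} = \spann(v_1, \ldots, v_{\ell - \dim P - 1})$, direct differentiation shows $\Sigma^{(1)}$ is again isotropic, with the next kernel obtained by peeling off one more basis vector.

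Iterating yields a chain $\Sigma' = \Sigma^{(0)} \supset \Sigma^{(1)} \supset \cdots \supset \Sigma^{(\ell - \dim P - 1)}$ of isotropic curves in Grassmannians of progressively smaller projective dimension. Triviality of the common intersection after quotienting prevents any intermediate $\Sigma^{(j)}$ from collapsing to a single point (such a collapse would force a nonzero subspace into every $\hat L'$), so the final iterate $C := \Sigma^{(\ell - \dim P - 1)}$ is a genuine nonconstant curve of points in $\PP^{n - \dim P - 1}$. Backwards induction on $j$ then identifies $\Sigma^{(j)}$ with $\{\Osc_{\ell - \dim P - 1 - j}(C, c) \mid c \in C\}$: the inductive step relies on the ``if''-direction computation, which shows that the osculating-space curve of $C$ satisfies the same rank-one tangent-kernel recursion as the $\Sigma^{(j)}$. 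Pulling back under $\pi_P$ finally realizes $\Sigma$ as the claimed preimages. The main obstacle will be setting up the iteration rigorously: ensuring that $L \mapsto \ker \phi_L$ extends to a morphism whose image is an irreducible isotropic curve at every stage, and that the adapted moving frame propagates consistently through all $\ell - \dim P - 1$ layers so that the final identification with osculating spaces emerges cleanly.
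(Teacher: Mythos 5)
Your proposal follows essentially the same route as the paper: your explicit moving-frame/derivative computation for osculating curves is the paper's proof of Theorem~\ref{thm:isotropicCurves}(1), and your iterated kernel curves $\Sigma^{(j)}$ are exactly the paper's $\Sigma^-$ chain, with your up-front quotient by $\bigcap_{L}\aff{L}$ playing the role of the paper's projection from $P_1=\Sigma_{\ell_1}^-$. The one step to make explicit in your backwards induction is that the kernel operation is invertible --- the image of the rank-one tangent of $\Sigma^{(j+1)}$ at $\ker\varphi_L$ is precisely $\aff{L}/\ker\varphi_L$, so $(\Sigma^-)^+=\Sigma$ (the paper's Lemma~\ref{lem:sigmaMinus}) --- which is what lets you conclude $\Sigma^{(j)}=\Osc_{\ell'-j}(C)$ from $\Sigma^{(j+1)}=\Osc_{\ell'-j-1}(C)$ rather than merely that both curves have the same kernel curve; your frame computation already supplies this image datum.
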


\begin{rem}
In Theorem~\ref{thm:isoCurvesFULL}, we allow the subspace $P \subset \PP^n$ to be empty. 
In that case, we use the convention that the empty set is a projective space of dimension~$-1$.
$\hfill\diamondsuit$
\end{rem}

\begin{example}
\label{ex:isoCurveStart}
Let $\Sigma$ be an irreducible isotropic curve in the Grassmannian $\Gr(1, \PP^3)$.
By Theorem~\ref{thm:isoCurvesFULL}, the curve $\Sigma$ either consists of the tangent lines of a curve in $\PP^3$, or of the lines on a cone over a plane curve  embedded into $\PP^3$. 
In fact, the curve $\Sigma \subset \Gr(1, \PP^3)$ is isotropic if and only if the surface $\mathcal{S}_\Sigma \subset \PP^3$ ruled by the lines on $\Sigma$ is  \emph{developable}, i.e., the projectively dual variety of $\mathcal{S}_\Sigma$ is a curve or a point. For more details, see Example~\ref{ex:developableSurfacesAreIsotropicCurves}.
$\hfill\diamondsuit$

\end{example}

Dually to Theorem~\ref{thm:stronglyCoisoFULL}, we give a full geometric description of strongly isotropic varieties. 

\begin{thm}
\label{thm:stronglyIsoFULL}
Let $\Sigma \subset \Gr(\ell, \PP^n)$ be an irreducible subvariety of dimension at least two.
The variety $\Sigma$ is strongly isotropic if and only if 
\begin{itemize}
\item either there is an $(\ell+1)$-dimensional subspace $P \subset \PP^n$ such that every $L \in \Sigma$ is contained in $P$, 
\item or there is an $(\ell-1)$-dimensional subspace $P \subset \PP^n$ such that every $L \in \Sigma$ contains $P$. $\hfill$ \emph{(Proof in Subsection~\ref{ssec:stronglyIsotropic})}
\end{itemize}
\end{thm}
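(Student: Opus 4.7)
The plan is to dualise the strategy of Theorem~\ref{thm:stronglyCoisoFULL}. The ``if'' direction is immediate: if every $L\in\Sigma$ lies in a fixed $(\ell+1)$-plane $P$, then $\Sigma\subseteq\Gr(\ell,P)$, whose tangent space at $L$ is $\Hom(\hat L,\hat P/\hat L)$; since the target is one-dimensional, every tangent vector has rank at most one. The case of a common $(\ell-1)$-plane is symmetric, with a one-dimensional source $\hat L/\hat P$.

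For the converse, the starting point is a linear-algebra lemma: a subspace $T\subseteq\Hom(V,W)$ of dimension at least two consisting entirely of rank-at-most-one homomorphisms is either of \emph{column-type} (all images lie in a common line of $W$) or of \emph{row-type} (all elements vanish on a common hyperplane of $V$). This is a short calculation, writing rank-one maps as $a\otimes\alpha$ and checking when $s\phi_1+t\phi_2$ stays of rank one. Applied at each smooth $L\in\Sigma$ to $T_L\Sigma\subseteq\Hom(\hat L,V/\hat L)$, and using irreducibility, I may assume after passing to a dense open subset that every $L$ is of column-type, with a unique associated $(\ell+1)$-plane $P(L)\supset L$ satisfying $T_L\Sigma\subseteq\Hom(\hat L,\hat P(L)/\hat L)$. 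The row-type case then follows by applying the column-type argument to $\Sigma$ viewed inside the dually isomorphic Grassmannian $\Gr(n-\ell-1,(\PP^n)^\ast)$, since transposition preserves rank and swaps the two types, in the spirit of Lemma~\ref{lem:dual}.

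The heart of the proof is to show that $P(L)$ is in fact constant on $\Sigma$. Let $X:=\overline{\bigcup_{L\in\Sigma}L}\subset\PP^n$. Differentiating the incidence relation $x\in L$ and using the column-type condition, one obtains $T_xX\subseteq P(L)$ for generic $(x,L)$ with $x\in L\in\Sigma$, so $\dim X\leq\ell+1$. Conversely $\Sigma$ contains distinct $\ell$-planes (as $\dim\Sigma\geq 2$) and is irreducible, so $\dim X\geq\ell+1$, and therefore $T_xX=P(L)$. Consequently the Gauss map $\gamma\colon X\dashrightarrow\Gr(\ell+1,\PP^n)$ factors as $\gamma(x)=P(L)$ for any $L\ni x$, and in particular is constant along each $L\in\Sigma$.

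The main obstacle is to upgrade this to the full constancy of $\gamma$. For this I invoke the classical theorem (valid in characteristic zero) that the closure of a general fiber of the Gauss map of an irreducible projective variety is a linear subspace of the ambient projective space. Setting $r:=\dim\gamma(X)$, a general fiber has dimension $\ell+1-r$ and contains the $\ell$-plane $L$, forcing $r\leq 1$. If $r=1$, the fiber must equal $L$, and the map $L\mapsto P(L)$ becomes a birational identification of $\Sigma$ with the one-dimensional image $\gamma(X)$, contradicting $\dim\Sigma\geq 2$. Hence $r=0$, the Gauss map is constant, $X$ is a fixed $(\ell+1)$-plane $P$, and every $L\in\Sigma$ lies in $P$, completing the proof.
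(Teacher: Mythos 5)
Your proposal is correct, and while it shares the paper's skeleton --- the easy ``if'' direction via $\alpha$- and $\beta$-varieties, the linear-algebra dichotomy (the paper's Lemma~\ref{lem:alphaBetaSegre}) splitting $\Sigma$ into $\alpha$-type (your row-type) and $\beta$-type (your column-type), and the duality reduction to a single type --- it diverges at the crucial step of showing that $P(L)$ is constant. The paper stays self-contained: it studies the auxiliary map $L \mapsto P_L$ into the neighbouring Grassmannian, shows its image $\Sigma_{\ker}$ is either a point or again strongly isotropic of the opposite type with the map generically injective, and then excludes the second alternative by a further tangent-space computation on the variety swept out by the $P_L$, exploiting that the fibers of its incidence correspondence over a point of that swept variety are positive-dimensional. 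You instead pass directly to the swept variety $X=\overline{\bigcup_{L\in\Sigma}L}$, identify $\TT_{X,x}$ with $P(L)$, and invoke the classical linearity of general Gauss fibers in characteristic zero to force the Gauss map to be constant. Your route is shorter and arguably more conceptual, but it imports a nontrivial external theorem (linearity of Gauss fibers, e.g.\ Griffiths--Harris or Zak), whereas the paper's argument uses only its own Lemma~\ref{lem:tangentSpaceIncidenceGrassmannian}. Two small points to tighten: the inclusion $\TT_{X,x}\subset P(L)$ should be justified by the flag-variety tangent-space computation (Corollary~\ref{cor:keyLemmaTangentCorrespondence} does exactly this), and in the case $\dim\gamma(X)=1$ you should say explicitly that a general Gauss fiber contains a general $L\in\Sigma$ (which holds because the spaces $L$ sweep out a dense subset of $X$), so that the fiber, being an $\ell$-dimensional linear space containing the $\ell$-plane $L$, equals $L$ and the map $L\mapsto P(L)$ is generically injective into the one-dimensional $\gamma(X)$, contradicting $\dim\Sigma\geq 2$.
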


\begin{example}
There are no strongly isotropic hypersurfaces in the four-dimensional Grassmannian $\Gr(1, \PP^3)$.
There are exactly two types of irreducible strongly isotropic surfaces:
Such a surface consists either of all lines passing through a fixed point, or of all lines lying in a fixed plane. $\hfill\diamondsuit$
\end{example}

Finally, we observe that subvarieties of Grassmannians of sufficiently high dimension are isotropic.

\begin{prop}
\label{prop:keyPropIso}
Every subvariety of $\Gr(\ell, \PP^n)$ of codimension at most $n-1$ is isotropic.
$\hfill$ \emph{(Proof in Section~\ref{sec:Iso})}
\end{prop}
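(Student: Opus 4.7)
The plan is to reduce the statement to a fact about the Grassmann secant variety of the Segre inside a single tangent space, and then verify that fact via an iterated Bertini-type nondegeneracy argument.

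Let $\Sigma\subset\Gr(\ell,\PP^n)$ be irreducible of codimension at most $n-1$, and let $L\in\Sigma$ be a smooth point. Write $V=\Hom(L,\KK^{n+1}/L)$ for the tangent space to the Grassmannian at $L$, and let $\Seg\subset\PP V$ denote the Segre variety of rank-one homomorphisms, which is smooth, irreducible and nondegenerate in $\PP V$, with $\dim\Seg=n-1$ and $\dim\PP V=(\ell+1)(n-\ell)-1=:N$. Setting $d=\dim\Sigma$, the assumption $\codim\Sigma\leq n-1$ translates to $d-1\geq N-(n-1)$; equivalently, the projectivized tangent subspace $\PP(T_L\Sigma)\in\Gr(d-1,\PP V)$ has codimension at most $\dim\Seg$ in $\PP V$. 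By the definition of isotropic variety, it suffices to prove that $\PP(T_L\Sigma)$ lies in the Grassmann secant variety $\Sec_{d-1}(\Seg)\subset\Gr(d-1,\PP V)$ at every such smooth point $L$.

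As $\Sec_{d-1}(\Seg)$ is Zariski closed, it is enough to prove that a general $(d-1)$-dimensional linear subspace $\Lambda\subset\PP V$ is spanned by points of $\Seg$; then the set of such spans is dense in $\Gr(d-1,\PP V)$ and forces $\Sec_{d-1}(\Seg)$ to equal the whole Grassmannian. Because the codimension of $\Lambda$ in $\PP V$ is at most $\dim\Seg$, the iterated Bertini-type principle---that a general linear section of an irreducible nondegenerate projective variety, in codimension at most its dimension, is nondegenerate in the cutting subspace---yields a general $\Lambda$ for which $\Seg\cap\Lambda$ is nondegenerate in $\Lambda$. Hence $\Lambda$ is the linear span of rank-one points of $\Seg$, as desired.

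The main technical obstacle is the cited nondegeneracy principle, which one proves by induction on the codimension of the linear section. The inductive step uses that a general hyperplane section of an irreducible nondegenerate variety of dimension at least two is again irreducible and nondegenerate in the hyperplane (Bertini); the base case cuts down to an irreducible nondegenerate curve in some $\PP^M$, whose degree is at least $M$, and invokes the uniform position principle (in characteristic zero) to conclude that a general hyperplane section consists of points in linearly general position, and so spans the hyperplane.
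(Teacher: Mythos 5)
Your proof is correct, and at the top level it follows the same reduction as the paper: both arguments show that the hypothesis $\codim \Sigma \leq n-1$ forces $\Sec_{d-1}(\Seg(\aff{L},\KK^{n+1}/\aff{L}))$ to be \emph{all} of $\Gr(d-1,\PP(\Hom(\aff{L},\KK^{n+1}/\aff{L})))$, where $d=\dim\Sigma$, so that every projectivized tangent space lies in the Grassmann secant variety for trivial reasons. The difference lies in how this fullness is established. The paper invokes Lemma~\ref{lem:grSecSpans}, which rests on the Ciliberto--Cools bound $\dim \Sec_k(X) \geq (k+1)\dim X$ from~\eqref{eq:dimGrSec}; your numerical condition that $\PP(T_{\Sigma,L})$ have codimension at most $\dim \Seg = n-1$ in $\PP(\Hom(\aff{L},\KK^{n+1}/\aff{L}))$ is exactly the condition $k \geq \codim X$ of that lemma, since $\codim \Seg(\aff{L},\KK^{n+1}/\aff{L}) = \ell(n-\ell-1)$. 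You instead prove fullness directly: a general linear space of codimension at most $\dim\Seg$ cuts the Segre variety in a nondegenerate subscheme, by iterated Bertini (keeping irreducibility and nondegeneracy as long as the section has dimension at least two) together with the uniform position principle for the final cut of a nondegenerate curve, whose degree is at least the dimension of its ambient space. This gives a self-contained argument that in fact reproves Lemma~\ref{lem:grSecSpans} for an arbitrary irreducible nondegenerate variety, at the cost of importing the general position machinery (which is where characteristic zero enters); the paper's route is shorter but leans on the cited inequality.
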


\begin{example}
All surfaces and hypersurfaces in the four-dimensional Grassmannian $\Gr(1,  \PP^3)$ are isotropic.
Together with Example~\ref{ex:isoCurveStart}, we have classified all isotropic subvarieties of this Grassmannian. 
In particular, surfaces in $\Gr(1, \PP^3)$ are both isotropic and coisotropic.
$\hfill\diamondsuit$
\end{example}

\subsection{Open problems}
\label{ssec:open}
In this article, we provide full geometric characterizations for subvarieties of Grassmannians which are either strongly isotropic or strongly coisotropic.
However, we do not give analogous descriptions for the weaker notions of isotropic or coisotropic varieties, respectively. This motivates the following open problems:

\begin{open} $\hspace*{2mm}$
\begin{enumerate}
\item Can we characterize coisotropic subvarieties of Grassmannians by underlying projective varieties, analogously to Theorem~\ref{thm:stronglyCoisoFULL}?
\item Can we find a geometric characterization of isotropic subvarieties?
\item Is there a comprehensive geometric characterization of subvarieties of Grassmannians which are both isotropic and coisotropic?
\end{enumerate}
\end{open}

Moreover, we conjecture that the irreducible components of the singular locus of a coisotropic variety are coisotropic again.
For instance, the singular locus of a Chow hypersurface of a smooth projective variety is its self-intersection, which is coisotropic by Remark~\ref{rem:selfintersection}.
Another example is given by the variety $\mathcal{L}_2(X)  \subset \Gr(1,\PP^n)$ formed by the tangent lines of a general hypersurface $X \subset \PP^n$.
Its singular locus has two irreducible components: its self-intersection, which is formed by bitangent lines, and $\mathcal{L}_3(X)$, which is coisotropic by Theorem~\ref{thm:higherContactLinesFULL}.
For a study of singular loci of coisotropic hypersurfaces in the Grassmannian of lines in $\PP^3$, see~\cite{congruences}.

\begin{open}
Are the irreducible components of the singular locus of a coisotropic subvariety of a Grassmannian again coisotropic?
\end{open} 

\subsection{Related work}
\label{ssec:related}
In addition to Cayley~\cite{cayley2}, Chow and van der Waerden~\cite{chow}, and Gel'fand, Kapranov and Zelevinsky~\cite{gkz}, many others have studied Chow hypersurfaces and their generalizations to coisotropic hypersurfaces.
For more information on Chow forms, see for instance~\cite{chowIntro}.
An introduction to coisotropic hypersurfaces is presented in~\cite{coisotropicHS}, see Theorem~\ref{thm:coisotropicHSdimension}.

As mentioned above, the variety of Chow forms with a fixed degree in the coordinate ring of the Grassmannian $\Gr(n-k-1, \PP^n)$ is a parameter space for the set of all $k$-dimensional subvarieties of $\PP^n$ with that fixed degree.
This parameter space is called \emph{Chow variety} by Gel'fand, Kapranov and Zelevinsky~\cite[Ch.~4]{gkz}.
A natural question is to describe the vanishing ideal of such a Chow variety, which was already investigated by Green and Morrison~\cite{green_morr}.
This ideal is explicitly computed for the variety of all quadratic Chow hypersurfaces in $\Gr(1, \PP^3)$ in~\cite{our_computation}.

A first type of coisotropic hypersurface which is not a Chow hypersurface is studied by Sturmfels in~\cite{hurwitz}: given an irreducible subvariety $X \subset \PP^n$ of degree at least two and dimension $k$, the \emph{Hurwitz hypersurface} of $X$ is the subvariety of the Grassmannian $\Gr(n-k, \PP^n)$ consisting of all $(n-k)$-dimensional subspaces which do not intersect $X$ in $\deg(X)$ reduced points.
As mentioned in Example~\ref{ex:coisotropicInSmallGrass}, Cayley~\cite{cayley2} and Gel'fand, Kapranov and Zelevinsky~\cite[Ch.~4]{gkz}  knew that a hypersurface of $\Gr(1, \PP^3)$ is coisotropic if and only if it is either the Chow hypersurface of a curve or the Hurwitz hypersurface of a surface.
Recently, Catanese~\cite{catanese} showed that these are exactly the self-dual hypersurfaces of $\Gr(1, \PP^3)$.
Moreover, the volume of the Hurwitz hypersurface of a projective variety $X$ plays a crucial role in the study of the condition of intersecting $X$ with varying linear subspaces~\cite{condition}.
More generally, one can compute the volume of all coisotropic hypersurfaces, which is essential for the probabilistic Schubert calculus proposed by B\"urgisser and Lerario~\cite{schubert}.

\section{Preliminaries}
\label{sec:prelim}
We work over an algebraically closed field $\KK$ of characteristic zero.
All vector spaces are finite-dimensional.
The dual vector space of a given vector space $V$ is denoted by $V^\ast$.

\subsection{Varieties}
All algebraic varieties that appear in this article are either affine or projective.
Hence, we understand a variety as the common zero locus of (homogeneous) polynomials.
In particular, varieties do not necessarily have to be irreducible.
We say that an affine or projective variety is \emph{nondegenerate} if it spans its ambient space.

We denote the affine cone over a subset $S \subset \PP^n$ by $\aff{S} \subset \KK^{n+1}$.
The projectivization of an affine cone $\mathcal{C}$ is denoted by $\PP(\mathcal{C})$.
We write $\Sing(X)$ for the singular locus of a variety $X$ and $\Reg(X) := X \setminus \Sing(X)$ for its regular locus.
If $X \subset \PP^n$, the \emph{embedded tangent space} of $X$ at $p \in \Reg(X)$ is
%\begin{align}
%\label{eq:embeddedTangentSpace}
%\TT_{X,p} := \left\lbrace
%y \in \PP^n \;\middle\vert\; \forall f \in I(X) : \sum_{i=0}^n \frac{\partial f}{\partial x_i}(p) \cdot y_i =0
%\right\rbrace.
%\end{align}

 \begin{align}
 \label{eq:embeddedTangentSpace}
 \begin{split}
 \TT_{X,p} :=  & \left\lbrace
 y \in \PP^n \;\middle\vert\; \forall f \in I(X) : \langle df|_{\aff{p}} , \aff{y} \rangle =0
 \right\rbrace.\\
 = & \left\lbrace
 y \in \PP^n \;\middle\vert\; \forall f \in I(X), \forall v \in \aff{p}, \forall w \in \aff{y} : \langle df|_{v} , w \rangle =0
 \right\rbrace.
 \end{split}
 \end{align}

A hyperplane in $\PP^n$ is called \emph{tangent} to $X$ at a smooth point $p$ if it contains $\TT_{X,p}$.
Note that each hyperplane $H \subset \PP^n$ corresponds to a point in $(\PP^n)^\ast:=\PP((\KK^{n+1})^{*})$, namely $\PP(\operatorname{Ann}(\aff{H}))$, which we denote by $H^\vee$.
The \emph{projectively dual variety} of $X$ is
\begin{align*}
X^\vee := \overline{ \left\lbrace
H^\vee \mid H \subset \PP^n \text{ hyperplane}, \exists p \in \Reg(X) : \TT_{X,p} \subset H
\right\rbrace} \subset (\PP^n)^\ast.
\end{align*}
If $X$ is irreducible, so is $X^\vee$~\cite[Ch.~1, Prop.~1.3]{gkz}.
For example, if $L$ is an $\ell$-dimensional projective subspace of $\PP^n$, then $L^\vee = \PP(\operatorname{Ann}(\aff{L}))$ is a projective subspace of $(\PP^n)^\ast$ with dimension $n-\ell-1$, whose points correspond to hyperplanes containing~$L$. 
In particular, we have $(\PP^n)^\vee = \emptyset$.
Throughout this article, we use the convention that the empty set is a projective space with dimension $-1$.
We will make frequent use of the following \emph{biduality} of projective varieties, which is also known as \emph{reflexivity}.

\begin{thm}[Biduality theorem {\cite[Thm.~4]{wallace}}]
Let $X \subset \PP^n$ be a projective variety over an algebraically closed field of characteristic zero.
If $p \in \Reg(X)$ and $H^\vee \in \Reg(X^\vee)$, then the hyperplane $H$ is tangent to $X$ at the point $p$ if and only if the hyperplane $p^\vee$ is tangent to $X^\vee$ at the point $H^\vee$.
In particular, we have $(X^\vee)^\vee = X$.
$\hfill \diamondsuit$
\end{thm}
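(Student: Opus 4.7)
The plan is to deduce biduality from the symmetry of the \emph{conormal variety}. I introduce
\[
N(X) := \overline{\{(p, H^\vee) \in \PP^n \times (\PP^n)^\ast : p \in \Reg(X),\ \TT_{X,p} \subseteq H\}},
\]
together with its two coordinate projections $\pi_1: N(X) \to X$ and $\pi_2: N(X) \to X^\vee$ (the second one being surjective by the very definition of $X^\vee$). The entire theorem is equivalent to the assertion that, under the canonical swap-and-bidual isomorphism $\sigma : \PP^n \times (\PP^n)^\ast \to (\PP^n)^\ast \times ((\PP^n)^\ast)^\ast$, one has $\sigma(N(X)) = N(X^\vee)$; once this is known, projecting to the second factor on the right gives $(X^\vee)^\vee = X$, and the tangency statement follows by reading off which pairs $(p,H^\vee)$ lie in $N(X)$ from either side.

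The computation at the heart of the proof is local. Fix a pair $(p, H^\vee)$ with $p \in \Reg(X)$ and $H^\vee \in \Reg(X^\vee)$, choose affine coordinates so that $p$ is the origin and $\TT_{X,p}=\{y_{k+1}=\cdots=y_n=0\}$, and parametrize $X$ locally as the graph $\{(x, f(x))\}$ of a map $f:\KK^k \to \KK^{n-k}$ with $f(0)=0$ and $df|_0 = 0$. The conormal space at a nearby point $q=(x,f(x))$ is then spanned by the rows of $(-df(x)^T,\, \mathrm{Id})$, giving an explicit local parametrization of $N(X)$ by pairs $(x,\lambda)\in \KK^k\times \KK^{n-k}$. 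I would then compute the differential of the Gauss-type map $\pi_2\circ \pi_1^{-1}$ and show that the image, i.e.\ the tangent cone to $X^\vee$ at $H^\vee$ inside the conormal fiber, is annihilated by $p$ regarded as a functional on $(\KK^{n+1})^\ast$. This is precisely the statement $\TT_{X^\vee, H^\vee}\subseteq p^\vee$, one half of biduality.

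For the converse, I would use irreducibility of $N(X)$ (dimension $n-1$) together with the dimension formula $\dim X^\vee = n - 1 - (\text{generic fiber dim.~of }\pi_2)$. Smoothness of $H^\vee$ on $X^\vee$ forces the fiber dimension of $\pi_2$ at $H^\vee$ to equal the codimension expected by the calculation above, so the containment $\TT_{X^\vee, H^\vee}\subseteq p^\vee$ together with a dimension count shows that the roles of $X$ and $X^\vee$ are symmetric near $(p,H^\vee)$. Interchanging them gives the other direction and hence $(X^\vee)^\vee = X$ at once.

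The main obstacle is the local Hessian computation identifying $\TT_{X^\vee, H^\vee}$: one must verify that the relevant derivatives genuinely cut out the tangent space of the \emph{image} variety $X^\vee$ and not merely a subspace of it. This is where characteristic zero enters essentially, through generic smoothness of the Gauss map $\pi_2$, ruling out inseparability phenomena where $\pi_2$ could be everywhere ramified and the computed tangent directions fail to fill out $\TT_{X^\vee,H^\vee}$.
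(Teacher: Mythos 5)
The paper does not prove this statement at all: it is quoted verbatim as the classical reflexivity theorem with a citation to Wallace, so there is no in-paper argument to compare yours against. Judged on its own, your proposal is the standard conormal-variety proof (the one in Gel'fand--Kapranov--Zelevinsky, Ch.~1, and essentially Wallace's), and the strategy is sound: reduce everything to the symmetry $\sigma(N(X)) = N(X^\vee)$, prove one containment by a local computation on a graph parametrization, and get the reverse containment from irreducibility and the fact that both conormal varieties have dimension $n-1$. Your identification of where characteristic zero enters (generic smoothness of the projection to $X^\vee$, so that the computed image of the differential really is all of $T_{X^\vee,H^\vee}$) is exactly right; this is the step that fails in positive characteristic.

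Three points where the sketch is looser than it should be. First, $\pi_2\circ\pi_1^{-1}$ is not a map: the fibers of $\pi_1$ over a smooth point are projective spaces of dimension $n-1-\dim X$, so you must differentiate $\pi_2$ along the local parametrization $(x,\lambda)\mapsto\bigl((x,f(x)),\,[-df(x)^T\lambda:\lambda:\ast]\bigr)$ of $N(X)$ itself. Second, the computation you defer as ``the main obstacle'' is in fact the whole proof, and it is not really a Hessian computation: writing the tangency conditions as $a=-df(x)^T\lambda$ and $c=\langle a,x\rangle+\langle\lambda,f(x)\rangle$, the product rule gives $dc=\langle da,x\rangle+\langle d\lambda,f(x)\rangle$ identically along $N(X)$, which vanishes at $x=0$, $f(0)=0$; that single identity is the statement $\TT_{X^\vee,H^\vee}\subseteq p^\vee$, and you should carry it out rather than flag it. Third, the theorem asserts the equivalence for \emph{every} pair $p\in\Reg(X)$, $H^\vee\in\Reg(X^\vee)$, not just generic ones; to pass from the open dense locus where your differential argument applies to all such pairs you need the observation that the incidence condition $\TT_{X,p}\subseteq H$ is closed on $\Reg(X)\times(\PP^n)^\ast$, so that $N(X)\cap(\Reg(X)\times(\PP^n)^\ast)$ coincides with the unclosed incidence set. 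With these repairs the argument is complete and is the expected one.
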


\subsection{Grassmannians}
\label{ssec:grass}
The \emph{Grassmannian} $\Gr(\ell, \PP^n)$ is the set of all projective subspaces of $\PP^n$ with dimension $\ell$.
It is naturally identified with the Grassmannian $\Gr(\ell+1, \KK^{n+1})$ of $(\ell+1)$-dimensional linear subspaces of $\KK^{n+1}$.
Moreover, it is an irreducible smooth $(\ell+1)(n-\ell)$-dimensional variety, embedded in $\PP^{\binom{n+1}{\ell+1}-1}$ via the Pl\"ucker embedding.

The tangent space of $\Gr(\ell+1, \KK^{n+1})$ at a point $\aff{L} \in \Gr(\ell+1, \KK^{n+1})$ is naturally identified with
\begin{align}
\label{eq:tangentSpaceGrassmannian}
T_{\Gr(\ell+1, \KK^{n+1}), \aff{L}} := & \; \Hom(\aff{L}, \KK^{n+1} / \aff{L}) \\
 \cong & \; \aff{L}^{\ast}\otimes (\KK^{n+1}/\aff{L}). \nonumber
\end{align}
A detailed discussion of tangent spaces of Grassmannians can be found in, for example, \cite[Lecture~16]{harris}. 
Throughout this article we closely follow the $\Hom$ convention used in \cite[Lecture~16]{harris}.
In fact, the description in~\eqref{eq:tangentSpaceGrassmannian} is very intuitive.
A tangent vector at $\aff{L}$ is a direction in which $\aff{L}$ can move.
Such a direction can be specified by describing the movement of every point on the linear subspace $\aff{L}$, i.e., by a homomorphism $\aff{L} \to \KK^{n+1}$.
If the direction of movement of a point on $\aff{L}$ lies inside $\aff{L}$, its movement does not contribute to the direction of movement of $\aff{L}$.
Thus, a tangent vector of $\Gr(\ell+1,\KK^{n+1})$ at $\aff{L}$ is already given by a homomorphism $\varphi: \aff{L} \to \KK^{n+1} / \aff{L}$.

\begin{rem}
Since $T_{\Gr(\ell, \PP^n),{L}} = \Hom(\aff{L}, \KK^{n+1} / \aff{L})$, as a special case the tangent space of projective space at a point $p \in \PP^n$ is
\begin{align*}
T_{\PP^n,p} = \Hom(\aff{p}, \KK^{n+1} / \aff{p}).
\end{align*}
% I got rid of the isomorphism we had that drops the factor of p* (non-canonical).
Hence, for a projective variety $X \subset \PP^n$, we distinguish between the \emph{Zariski tangent space} $T_{X,p}$ at $p \in \Reg(X)$ and the embedded tangent space $\TT_{X,p} \subset \PP^n$; see~\eqref{eq:embeddedTangentSpace}.
Note that the dimension of both tangent spaces is $\dim(X)$, although the embedded tangent space is a projective space, whereas the Zariski tangent space is an abstract vector space. 
For an affine variety $Y \subset \KK^{n+1}$, 
we only consider the Zariski tangent space $T_{Y,p} \subset \KK^{n+1}$ at $p \in \Reg(Y)$.
Note that
$T_{X,p} = \Hom(\aff{p}, T_{\aff{X},x} / \aff{p})$ and
$\TT_{X,p} = \PP(T_{\aff{X},x})$ for $x \in \aff{p} \setminus \lbrace 0 \rbrace$.
$\hfill\diamondsuit$
\end{rem}

For a subvariety $\Sigma \subset \Gr(\ell+1, \KK^{n+1})$, the \emph{normal space} of $\Sigma$ at $\aff{L} \in \Reg(\Sigma)$ is $N_{\Sigma, \aff{L}} := T_{\Gr(\ell+1, \KK^{n+1}), \aff{L}} / T_{\Sigma, \aff{L}}$.
Its dual vector space is the \emph{conormal space} of $\Sigma$ at~$\aff{L}$.
For two vector spaces $U$ and $W$, we identify the dual vector space of $\Hom(U,W)$ with $\Hom(W,U)$ via
% - Assignment HW (mention tensor-notation duality)
\begin{align*}
\Hom(W,U) &\longrightarrow \Hom(U,W)^\ast, \\
\phi &\longmapsto \mathrm{tr}(\cdot \circ \phi) = \mathrm{tr}(\phi \circ \cdot)
\end{align*}
Equivalently, using tensor notation,
\begin{align*}
W^{\ast}\otimes U &\longrightarrow (U^{\ast}\otimes W)^\ast, \\
l\otimes u &\longmapsto (k\otimes w\mapsto l(w)k(u))
\end{align*}
This identification is used to express the conormal space of $\Sigma$ at $\aff{L}$:
\begin{align*}
\left(N_{\Sigma, \aff{L}} \right)^\ast
&\isom \left\lbrace \varphi \in \left( T_{\Gr(\ell+1, \KK^{n+1}), \aff{L}} \right)^\ast \mid T_{\Sigma, \aff{L}} \subset \ker \varphi \right\rbrace \\
&\isom \left\lbrace \varphi \in \Hom(\KK^{n+1} / \aff{L}, \aff{L}) \mid
\forall \psi \in T_{\Sigma,\aff{L}} : \mathrm{tr}(\varphi \circ \psi)=0 
\right\rbrace \\
& =: N_{\Sigma,\aff{L}}^\ast
\end{align*}
An advantage of using \emph{co}normal spaces is that conormal vectors can be viewed as homomorphisms by these identifications, whereas normal vectors are cosets, elements of a quotient space.
In particular, each conormal vector has a rank.

\begin{remDef}
\label{rem:dualityConormalSpaces}
There is a canonical isomorphism 
\begin{align*}
\begin{split}
\Gr(\ell, \PP^n) &\overset{\sim}{\longrightarrow} \Gr(n-\ell-1, (\PP^n)^\ast), \\
L &\longmapsto L^\vee.
\end{split}
\end{align*}
Using vector spaces it can be expressed as follows:
\begin{align*}
\Gr(\ell+1, \KK^{n+1}) &\overset{\sim}{\longrightarrow} \Gr(n-\ell, (\KK^{n+1})^\ast),\\
\aff{L} &\longmapsto \operatorname{Ann}(\aff{L})\cong\left( \KK^{n+1} / \aff{L} \right)^\ast.
\end{align*}
The differential of this isomorphism at $\aff{L} \in \Gr(\ell+1, \KK^{n+1})$ is
\begin{align*}
\Hom \left(\aff{L}, \KK^{n+1} / \aff{L} \right) &\longrightarrow \Hom \left(\left( \KK^{n+1} / \aff{L} \right)^\ast, \aff{L}^\ast \right),\\
\varphi &\longmapsto \varphi^\ast.
\end{align*}
For a subvariety $\Sigma \subset \Gr(\ell+1, \KK^{n+1})$, we denote by $\Sigma^\perp \subset \Gr(n-\ell, (\KK^{n+1})^\ast)$
the image of $\Sigma$ under this isomorphism.
Since the trace of an endomorphism $\psi$ is zero if and only if the trace of $\psi^\ast$ is zero,
the isomorphism of the conormal space of $\Sigma$ at $\aff{L} \in \Reg(\Sigma)$ with the corresponding conormal space of $\Sigma^\perp$ is
\begin{align*}
\hspace{25mm}
\begin{array}{cccc}
 \Hom \left(\KK^{n+1} / \aff{L}, \, \aff{L} \right) && \Hom \left( \aff{L}^\ast, \left( \KK^{n+1} / \aff{L} \right)^\ast \right) \\
 \cup && \cup \\
N^\ast_{\Sigma, \aff{L}} & \longrightarrow & N^\ast_{\Sigma^\perp, (\KK^{n+1} / \aff{L} )^\ast}, \\
 \psi & \longmapsto & \psi^\ast.
&
\hspace{24mm} \diamondsuit
\end{array}
\end{align*}
\end{remDef} 

In this article, we will often consider incidence correspondences of Grassmannians and their tangent spaces.
For this, we introduce the following notation.
Let $\varphi: U \to V_1 / V_2$ be a homomorphism of vector spaces, where $V_2 \subset V_1$.
For a subspace $W \subset U$, we denote by $\varphi |_W: W \to V_1 / V_2$ the restriction of $\varphi$ to $W$.
Furthermore, for a vector space $V$ with $V_2 \subset V \subset V_1$ we denote by $(\varphi \mod V): U \to  V_1 / V$ the composition of the canonical projection $V_1 / V_2 \to V_1 / V$ with $\varphi$.

\begin{lem}
\label{lem:tangentSpaceIncidenceGrassmannian}
Let $k < \ell$.
The tangent space of the \emph{flag variety}
\begin{align*}
F_{k,\ell} := \left\lbrace (\aff{L_1}, \aff{L_2}) \in \Gr(k+1, \KK^{n+1}) \times \Gr(\ell+1, \KK^{n+1}) \mid \aff{L_1} \subset \aff{L_2} \right\rbrace
\end{align*}
at a point $(\aff{L_1}, \aff{L_2}) \in F_{k, \ell}$ is
\begin{align*}
T_{F_{k,\ell},(\aff{L_1}, \aff{L_2})} &= \left\lbrace
(\psi, \varphi)  \mid \varphi |_{\aff{L_1}} = (\psi \mod \aff{L_2})
\right\rbrace \\
&\subset \Hom(\aff{L_1}, \KK^{n+1} / \aff{L_1}) \times \Hom(\aff{L_2}, \KK^{n+1} / \aff{L_2}).
\end{align*}
\end{lem}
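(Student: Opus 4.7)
The plan is to establish the claimed description by first verifying the inclusion ``$\subseteq$'' via a direct differentiation argument using moving bases, and then closing the gap by a dimension count, using the well-known smoothness of the flag variety.

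For the inclusion ``$\subseteq$'', I would consider a smooth curve $(\aff{L_1}(t), \aff{L_2}(t))$ in $F_{k,\ell}$ passing through $(\aff{L_1}, \aff{L_2})$ at $t=0$, and lift it to a family of bases: pick smoothly varying vectors $v_1(t), \dots, v_{k+1}(t), v_{k+2}(t), \dots, v_{\ell+1}(t)$ such that $v_1(t), \dots, v_{k+1}(t)$ span $\aff{L_1}(t)$ and all $\ell+1$ vectors together span $\aff{L_2}(t)$. Under the standard identification recalled in~\eqref{eq:tangentSpaceGrassmannian}, the image in $T_{\Gr(k+1, \KK^{n+1}), \aff{L_1}} \times T_{\Gr(\ell+1, \KK^{n+1}), \aff{L_2}}$ of the tangent vector to this curve is the pair $(\psi, \varphi)$ with $\psi(v_i(0)) = v_i'(0) \bmod \aff{L_1}$ for $i \leq k+1$ and $\varphi(v_j(0)) = v_j'(0) \bmod \aff{L_2}$ for $j \leq \ell+1$. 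For $i \leq k+1$, the vector $v_i(0) \in \aff{L_1}$ and $(\psi(v_i(0)) \bmod \aff{L_2}) = v_i'(0) \bmod \aff{L_2} = \varphi(v_i(0))$, giving precisely the compatibility $\varphi|_{\aff{L_1}} = (\psi \bmod \aff{L_2})$. Since the tangent space is spanned by such velocities of curves, every tangent vector satisfies this condition.

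For equality, I would count dimensions. The flag variety $F_{k,\ell}$ is a smooth irreducible projective variety (e.g., realized as an iterated Grassmann bundle $\Gr(k+1,\aff{L_2})$ over $\Gr(\ell+1, \KK^{n+1})$), of dimension $(\ell+1)(n-\ell) + (k+1)(\ell-k)$. On the other hand, the linear subspace $V$ on the right-hand side fibers over $\Hom(\aff{L_2}, \KK^{n+1}/\aff{L_2})$ by sending $(\psi,\varphi) \mapsto \varphi$: given any $\varphi$, the constraint on $\psi$ fixes $(\psi \bmod \aff{L_2}) \in \Hom(\aff{L_1}, \KK^{n+1}/\aff{L_2})$, so $\psi$ is determined up to the kernel of the canonical surjection $\Hom(\aff{L_1}, \KK^{n+1}/\aff{L_1}) \twoheadrightarrow \Hom(\aff{L_1}, \KK^{n+1}/\aff{L_2})$, which has dimension $(k+1)(\ell-k)$. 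Hence $\dim V = (\ell+1)(n-\ell) + (k+1)(\ell-k) = \dim F_{k,\ell}$, forcing the containment proved above to be an equality.

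The main obstacle I anticipate is purely bookkeeping: making sure the identification~\eqref{eq:tangentSpaceGrassmannian} is applied consistently on both factors, and that the curve lift $v_i(t)$ actually exists smoothly through the chosen point (which follows from the fact that $F_{k,\ell}$ is locally trivial over $\Gr(\ell+1, \KK^{n+1})$ with smooth Grassmannian fiber, so any tangent vector is realized by such a curve). No deeper ingredient is needed.
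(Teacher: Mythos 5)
Your proposal is correct and follows essentially the same route as the paper: the inclusion ``$\subseteq$'' via the standard moving-basis description of Grassmannian tangent spaces (which the paper delegates to \cite[Example~16.2, Exercise~16.3]{harris}), followed by a dimension count to upgrade to equality. Your version merely spells out both steps — the curve-differentiation argument and the explicit computation that both sides have dimension $(\ell+1)(n-\ell)+(k+1)(\ell-k)$ — in more detail than the paper does, and both computations check out.
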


\begin{proof}
From our description of tangent spaces to Grassmannians it follows immediately that every tangent vector $(\psi, \varphi) \in T_{F_{k,\ell},(\aff{L_1}, \aff{L_2})}$ and every $v \in \aff{L_1}$ satisfy
$\varphi(v) = \psi(v)$ modulo $\aff{L_2}$; see also~\cite[Example~16.2, Exercise~16.3]{harris}.
Hence, $T_{F_{k,\ell},(\aff{L_1}, \aff{L_2})} \subset \lbrace (\psi, \varphi)  \mid \varphi |_{\aff{L_1}} = (\psi \mod \aff{L_2}) \rbrace$.
Since both vector spaces have the same dimension, they are equal.
\end{proof}

We usually apply this result to graphs of rational maps which are contained in~$F_{k,\ell}$.
Therefore, we formulate this version of the result explicitly in Corollary~\ref{cor:keyLemmaTangentCorrespondence}.
For a rational map $\varpi: X \dashrightarrow Y$ between varieties, we denote by $D_p \varpi: T_{X,p} \to T_{Y, \varpi(p)}$ the differential of $\varpi$ at $p$ where it is defined.

\begin{example}
\label{ex:tangentLinesToCurveRationalMap}
Many proofs in this article will follow the ideas presented in this example.
Let $C \subset \PP^3$ be an irreducible curve of degree at least two. The graph of the rational map
\begin{align*}
\varpi: C &\longdashrightarrow \Gr(1, \PP^3),\\
\Reg(C) \ni p &\longmapsto \TT_{C,p}
\end{align*}
is contained in the flag variety $F_{0,1}$.
We denote the Zariski closure of the image of $\varpi$ by $\mathcal{T}(C)$.
This curve of tangent lines appears in Examples~\ref{ex:isoCurveStart} and~\ref{ex:developableSurfacesAreIsotropicCurves}.
For a general point $p$ on the curve $C$ with tangent line $L := \TT_{C,p}$, the differential $D_p \varpi: T_{C,p} \to T_{\mathcal{T}(C), L}$ is an isomorphism.
Note that $$T_{C,p} = \lbrace \varphi \in \Hom(\aff{p}, \KK^4 / \aff{p}) \mid \im \varphi \subset \aff{L} / \aff{p} \rbrace.$$
By the following Corollary~\ref{cor:keyLemmaTangentCorrespondence}, we have for every $\varphi \in T_{C,p}$ that
\begin{align*}
D_p \varpi (\varphi) |_{\aff{p}} = (\varphi \mod \aff{L}).
\end{align*}
Since the image of every $\varphi \in T_{C,p}$ is contained in $\aff{L} / \aff{p}$ and $D_p \varpi$ is an isomorphism, 
the kernel of every homomorphism in $T_{\mathcal{T}(C), L} \subset \Hom(\aff{L}, \KK^4 / \aff{L})$ contains $\aff{p}$.
In particular, the one-dimensional vector space $T_{\mathcal{T}(C), L}$ is spanned by a homomorphism $\psi: \aff{L} \to \KK^4 / \aff{L}$ such that $\aff{p} \subset \ker \psi$.
Since $\aff{L}$ is a plane containing the line $\aff{p}$, the kernel of $\psi$ is equal to $\aff{p}$ and the homomorphism $\psi$ has rank one.
We will see in Subsection~\ref{ssec:isotropicCurves} that the one-dimensional image of this homomorphism is $\aff{H}/\aff{L}$, where $H \subset \PP^3$ is the \emph{osculating plane} of the curve $C$ at the point $p$.
$\hfill\diamondsuit$
\end{example}

\begin{cor}
\label{cor:keyLemmaTangentCorrespondence}
Let $\Sigma \subset \Gr(\ell+1, \KK^{n+1})$ be an irreducible subvariety with a rational map $\varpi: \Sigma \dashrightarrow \Gr(k+1, \KK^{n+1})$.
If $k \leq \ell$ and $\varpi(\aff{L}) \subset \aff{L}$ for a general $\aff{L} \in \Sigma$, then a general $\aff{L} \in \Sigma$ satisfies
\begin{align}
\label{eq:tangentCorrespondence}
\varphi |_{{\varpi(\aff{L})}} = \left( D_{\aff{L}} \varpi (\varphi)  \mod {\aff{L}} \right)
\end{align}
for every $\varphi \in T_{\Sigma, \aff{L}}$, i.e., the following diagram commutes:
\begin{center}
\begin{small}
\begin{tikzcd}[column sep=tiny, framed]
\Hom \left( \aff{L}, \KK^{n+1}/\aff{L} \right) \supset \hspace*{-5mm} & T_{\Sigma, \aff{L}}  \arrow[dr, "\cdot |_{\varpi(\aff{L})}" {below, near start}] \arrow[rr, "D_{\aff{L}} \varpi"] & & \Hom \left(\varpi(\aff{L}), \, \KK^{n+1} / \varpi(\aff{L}) \right) \arrow[dl, "(\cdot \mod \aff{L})" near start] \\ 
& & \Hom \left(\varpi(\aff{L}), \, \KK^{n+1}/\aff{L} \right)  & 
\end{tikzcd}
\end{small}
\end{center}
If $k \geq \ell$ and $\varpi(\aff{L}) \supset \aff{L}$ for a general $\aff{L} \in \Sigma$, then a general $\aff{L} \in \Sigma$ satisfies
\begin{align*}
D_{\aff{L}} \varpi(\varphi) |_{\aff{L}} = \left( \varphi \mod {\varpi(\aff{L})} \right)
\end{align*}
for every $\varphi \in T_{\Sigma, \aff{L}}$, i.e., the following diagram commutes:
\begin{center}
\begin{small}
\begin{tikzcd}[column sep=tiny, framed]
\Hom \left( \aff{L}, \KK^{n+1}/\aff{L} \right) \supset \hspace*{-5mm} & T_{\Sigma, \aff{L}}  \arrow[dr, "(\cdot \mod \varpi(\aff{L})) \hspace*{16mm}" {below, near start}] \arrow[rr, "D_{\aff{L}} \varpi"] & & \Hom \left(\varpi(\aff{L}), \, \KK^{n+1} / \varpi(\aff{L}) \right) \arrow[dl, "\cdot |_{\aff{L}}" near start] \\ 
& & \Hom \left(\aff{L}, \, \KK^{n+1}/\varpi(\aff{L}) \right)  & 
\end{tikzcd}
\end{small}
\end{center}
\end{cor}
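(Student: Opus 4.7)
The plan is to realize both identities as instances of Lemma~\ref{lem:tangentSpaceIncidenceGrassmannian}, by pushing the statement about $\varpi$ onto its graph inside an appropriate flag variety.

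For the first case ($k \leq \ell$, $\varpi(\aff{L}) \subset \aff{L}$), I would consider the Zariski closure $\Gamma_\varpi$ of the graph of $\varpi$ inside $\Gr(k+1,\KK^{n+1}) \times \Gr(\ell+1,\KK^{n+1})$. The hypothesis that $\varpi(\aff{L}) \subset \aff{L}$ for general $\aff{L}$ forces $\Gamma_\varpi \subset F_{k,\ell}$, since the containment condition is Zariski closed and holds on a dense open subset of $\Sigma$. The projection $\mathrm{pr}_2 : \Gamma_\varpi \to \Sigma$ is birational, and at a general point $\aff{L}$ (one where $\varpi$ is regular and $\mathrm{pr}_2$ is étale) its inverse is $\aff{L} \mapsto (\varpi(\aff{L}), \aff{L})$. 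Therefore the tangent space of $\Gamma_\varpi$ at $(\varpi(\aff{L}), \aff{L})$ is precisely the image of $T_{\Sigma,\aff{L}}$ under the map $\varphi \mapsto (D_{\aff{L}}\varpi(\varphi),\, \varphi)$.

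Now $T_{\Gamma_\varpi,(\varpi(\aff{L}),\aff{L})} \subset T_{F_{k,\ell},(\varpi(\aff{L}),\aff{L})}$, so Lemma~\ref{lem:tangentSpaceIncidenceGrassmannian} applies to the pair $(\psi,\varphi') = (D_{\aff{L}}\varpi(\varphi),\varphi)$ with $\aff{L_1} = \varpi(\aff{L})$, $\aff{L_2} = \aff{L}$, yielding
\[
\varphi|_{\varpi(\aff{L})} \;=\; \bigl( D_{\aff{L}}\varpi(\varphi) \;\operatorname{mod}\; \aff{L} \bigr),
\]
which is the first identity.

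For the second case ($k \geq \ell$, $\varpi(\aff{L}) \supset \aff{L}$), the same graph now lies instead in the flag variety $F_{\ell,k}$, where the roles of smaller and larger subspace are swapped: $\aff{L}$ is the smaller, $\varpi(\aff{L})$ is the larger. A tangent vector at $(\aff{L}, \varpi(\aff{L}))$ pulled back from $T_{\Sigma,\aff{L}}$ has the shape $(\varphi,\, D_{\aff{L}}\varpi(\varphi))$, and applying Lemma~\ref{lem:tangentSpaceIncidenceGrassmannian} with the roles of the two factors reversed (so that the compatibility becomes ``the restriction of the larger-factor homomorphism to the smaller subspace equals the smaller-factor homomorphism modulo the larger subspace'') delivers
\[
D_{\aff{L}}\varpi(\varphi)|_{\aff{L}} \;=\; \bigl( \varphi \;\operatorname{mod}\; \varpi(\aff{L}) \bigr).
\]

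The only genuinely substantive point is that the graph of $\varpi$ is contained in the relevant flag variety; everything else is a bookkeeping exercise of identifying the tangent space of the graph with the graph of $D_{\aff{L}}\varpi$. I expect no real obstacle, since the containment of the graph in $F_{k,\ell}$ (respectively $F_{\ell,k}$) is immediate from the hypothesis on $\varpi$, and Lemma~\ref{lem:tangentSpaceIncidenceGrassmannian} does all the work once the tangent vector to $\Gamma_\varpi$ is written in the form $(D_{\aff{L}}\varpi(\varphi),\varphi)$ or $(\varphi, D_{\aff{L}}\varpi(\varphi))$.
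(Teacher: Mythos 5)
Your proposal is correct and follows essentially the same route as the paper's own proof: the authors likewise take the Zariski closure $\Gamma$ of the graph of $\varpi$ inside $F_{k,\ell}$, observe that the projection $\pi_2\colon\Gamma\to\Sigma$ has bijective differential at a general point so that $D_{\aff{L}}\varpi = D\pi_1\circ(D\pi_2)^{-1}$, and then invoke Lemma~\ref{lem:tangentSpaceIncidenceGrassmannian}. The second case is handled in both treatments by the symmetric argument with the roles of the two flag factors exchanged.
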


\begin{proof}
We denote by $\Sigma_0 \subset \Sigma$ the subset where $\varpi$ is defined.
Let us first assume that $k \leq \ell$ and that $\varpi(\aff{L}) \subset \aff{L}$ holds for all $\aff{L} \in \Sigma_0$.
We consider the Zariski closure of the graph of $\varpi$:
\begin{align*}
\Gamma := \overline{\left\lbrace
(\varpi(\aff{L}), \aff{L}) \mid \aff{L} \in \Sigma_0
\right\rbrace} \subset \left( \Gr(k+1, \KK^{n+1}) \times \Sigma \right) \cap F_{k,\ell}.
\end{align*}
Moreover, let $\pi_1 :\Gamma \to \Gr(k+1, \KK^{n+1})$ and $\pi_2: \Gamma \to \Sigma$ denote the projections onto the first and second factor, respectively.
We can draw another commutative diagram (see below):
at a general $\aff{L} \in \Sigma$, the differential $D_{(\varpi(\aff{L}),\aff{L})} \pi_2$ is bijective and
$D_{\aff{L}} \varpi =  D_{(\varpi(\aff{L}),\aff{L})} \pi_1 \circ (D_{(\varpi(\aff{L}),\aff{L})} \pi_2)^{-1}$.
Hence, by Lemma~\ref{lem:tangentSpaceIncidenceGrassmannian}, the differential $D_{\aff{L}} \varpi$ of $\varpi$ at $\aff{L}$ satisfies~\eqref{eq:tangentCorrespondence}.
\begin{center}
\begin{small}
\begin{tikzcd}[framed]
T_{\Gamma, \, (\varpi(\aff{L}),\aff{L})}
\arrow[d, hook, two heads, "D_{(\varpi(\aff{L}),\aff{L})} \pi_2" left]
\arrow[r, "D_{(\varpi(\aff{L}),\aff{L})} \pi_1"] &[16mm] \Hom \left( \varpi(\aff{L}), \, \KK^{n+1}/\varpi(\aff{L}) \right)  \\
T_{\Sigma, \aff{L}}
\arrow[ru, "\hspace*{4mm} D_{\aff{L}} \varpi" below]
\end{tikzcd}
\end{small}
\end{center}

If $k \geq \ell$ and $\aff{L} \subset \varpi(\aff{L})$ for all $\aff{L} \in \Sigma_0$, we proceed analogously.
\end{proof}

With Lemma~\ref{lem:tangentSpaceIncidenceGrassmannian} we can also understand the differential of the rational map
\begin{align}
\label{eq:StiefelCoords}
\begin{split}
\rho: \KK^{(\ell+1) \times (n+1)} &\longdashrightarrow \Gr(\ell+1, \KK^{n+1}), \\
A &\longmapsto \mathrm{rowspace}(A).
\end{split}
\end{align}
For an $(\ell+1) \times (n+1)$-matrix $A$ and $0 \leq i \leq \ell$, we write $A_i \in \KK^{n+1}$ for the $i$-th row of $A$.

\begin{cor}
\label{cor:differentialStiefel}
Let $A \in \KK^{(\ell+1) \times (n+1)}$ be a matrix of full rank and let \mbox{$\aff{L} := \rho(A)$} denote its rowspace. 
The differential of~\eqref{eq:StiefelCoords} at $A$ is given by
\begin{align*}
D_A \rho: \KK^{(\ell+1) \times (n+1)} &\longrightarrow \Hom(\aff{L}, \KK^{n+1} / \aff{L}), \\
M &\longmapsto \left( A_i \mapsto M_i + \aff{L} \quad\quad \text{ for all } 0 \leq i \leq \ell \right).
\end{align*}
\end{cor}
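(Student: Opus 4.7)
The strategy is to apply Lemma~\ref{lem:tangentSpaceIncidenceGrassmannian} row-by-row. For each index $0 \leq i \leq \ell$, the composition of the $i$-th row extraction $A \mapsto A_i$ with projectivization yields a rational map $\pi_i: \KK^{(\ell+1)\times(n+1)} \dashrightarrow \PP^n$, and since $[A_i] \in \rho(A) = \aff{L}$ on the full-rank locus, the product $(\pi_i, \rho)$ is a rational map into the flag variety $F_{0, \ell} \subset \Gr(0,\PP^n) \times \Gr(\ell, \PP^n)$. Hence, by Lemma~\ref{lem:tangentSpaceIncidenceGrassmannian}, the pair $\left( D_A \pi_i(M),\, D_A \rho(M) \right)$ must satisfy the compatibility relation $D_A \rho(M)|_{\aff{A_i}} = \left( D_A \pi_i(M) \mod \aff{L} \right)$.

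Next, I would identify the $\pi_i$-component explicitly. Under the standard identification $T_{\PP^n, [v]} = \Hom(\aff{v}, \KK^{n+1}/\aff{v})$, the differential of projectivization $\KK^{n+1}\setminus\lbrace 0\rbrace \to \PP^n$ at a nonzero vector $v$ sends a direction $w \in \KK^{n+1}$ to the homomorphism $v \mapsto w + \aff{v}$; this is either folklore or a one-line specialization of~\eqref{eq:tangentSpaceGrassmannian} to $\ell=0$. Composing with the linear row projection $A \mapsto A_i$ gives $D_A \pi_i(M): A_i \mapsto M_i + \aff{A_i}$. Combining this with the compatibility relation above and reducing modulo $\aff{L}$ yields the per-row identity $D_A \rho(M)(A_i) = M_i + \aff{L}$ for every $0 \leq i \leq \ell$.

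To finish, I would note that since $A$ has full rank, the rows $A_0, \ldots, A_\ell$ form a basis of $\aff{L}$, so the homomorphism $D_A \rho(M) \in \Hom(\aff{L}, \KK^{n+1}/\aff{L})$ is determined by its values on these basis vectors. The per-row identities just established are then exactly the formula in the statement.

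No step here is difficult; the argument is essentially bookkeeping on top of Lemma~\ref{lem:tangentSpaceIncidenceGrassmannian}. The only place where one must be careful is the invocation of the differential of projectivization, but this fits seamlessly into the $\Hom$-convention that the paper has already set up for tangent spaces of Grassmannians.
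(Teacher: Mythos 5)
Your proof is correct and follows essentially the same route as the paper: both arguments form the product map $(\pi_i,\rho)$ into the flag variety $F_{0,\ell}$, apply Lemma~\ref{lem:tangentSpaceIncidenceGrassmannian} to extract the compatibility relation $D_A\rho(M)|_{\spann(A_i)} = (D_A\pi_i(M) \bmod \aff{L})$, and conclude row by row. The only cosmetic difference is that you justify the differential of the projectivization map explicitly, where the paper simply states the formula for $D_A\pi_i$.
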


\begin{proof}
Let $i \in \lbrace 0, \ldots, \ell \rbrace$.
We consider the map 
\begin{align*}
\pi_i: \KK^{(\ell+1) \times (n+1)} &\longdashrightarrow \Gr(1, \KK^{n+1}) \isom \PP^n, \\
A &\longmapsto \spann(A_i).
\end{align*}
Its differential at $A$ is
\begin{align*}
D_A \pi_i: \KK^{(\ell+1) \times (n+1)} &\longrightarrow \Hom \left(\spann(A_i), \, \KK^{n+1} / \spann(A_i) \right), \\
M &\longmapsto \left( A_i \mapsto M_i + \spann(A_i) \right).
\end{align*}
The differential of the product map $\pi_i \times \rho: \KK^{(\ell+1) \times (n+1)} \dashrightarrow F_{0, \ell}$ at $A$
is equal to $D_A \pi_i \times D_A \rho$.
Hence, by Lemma~\ref{lem:tangentSpaceIncidenceGrassmannian}, we have
for every $M \in \KK^{(\ell+1) \times (n+1)}$ that
$$(D_A \rho (M))(A_i) = (D_A \pi_i (M) \mod \aff{L})(A_i) = M_i + \aff{L}. $$
\end{proof}

\subsection{Coisotropic hypersurfaces}
Gel'fand, Kapranov and Zelevinsky prove Theorem~\ref{thm:stronglyCoisoFULL} for hypersurfaces in Grassmannians~\cite[Ch.~4, Thm.~3.14]{gkz}.
An alternative proof can be found in~\cite{coisotropicHS}.
\begin{defn} \label{def:coisotropicHS} $\hspace*{2mm}$
\begin{enumerate}
\item A hypersurface $\Sigma \subset \Gr(\ell, \PP^n)$ is called \emph{coisotropic} if, for every $L \in \Reg(\Sigma)$, the conormal space $N_{\Sigma, L}^\ast \subset \Hom(\KK^{n+1} / \aff{L}, \aff{L})$ is spanned by a homomorphism of rank one. 
\item The $\ell$-th \emph{higher associated variety} $\mathcal{G}_\ell(X) \subset \Gr(\ell, \PP^n)$ of an irreducible subvariety $X \subset \PP^n$ consists of all $\ell$-dimensional subspaces which intersect $X$ at some smooth point non-transversely:
\begin{align*}
\mathcal{G}_\ell(X) := \overline{\left\lbrace  L \mid   \exists x \in \Reg(X) \cap L : \dim(L + \TT_{X,x})  < n \right\rbrace} \subset \Gr(\ell, \PP^n).
\end{align*}
\end{enumerate}
\end{defn}

\noindent
Here and throughout the rest of this article $L_1+L_2$ denotes the projective span of two projective subspaces $L_1$ and $L_2$.

With Definition~\ref{def:coisotropicHS}, we can restate Theorem~\ref{thm:stronglyCoisoFULL} for hypersurfaces:
an irreducible hypersurface $\Sigma \subset \Gr(\ell, \PP^n)$ is coisotropic if and only if there is an irreducible variety $X \subset \PP^n$ such that $\Sigma = \mathcal{G}_\ell(X)$.
We also note that the underlying projective variety $X$ can be uniquely recovered from its higher associated hypersurfaces $\mathcal{G}_\ell(X)$. 
In particular, there are two \texttt{Macaulay2} packages which compute coisotropic hypersurfaces and recover their underlying projective varieties: \texttt{Coisotropy.m2} and \texttt{Resultants.m2}.

\begin{rem}
The higher associated varieties of a projective variety  interpolate between the variety itself and and its projectively dual variety:
for $X \subset \PP^n$, we have $\mathcal{G}_0(X) = X$ and $\mathcal{G}_{n-1}(X) = X^\vee$.
$\hfill\diamondsuit$
\end{rem}

It is known for which $\ell$ the higher associated varieties $\mathcal{G}_\ell(X)$ are hypersurfaces in their ambient Grassmannian. 
Moreover, the degrees of the higher associated hypersurfaces of a projective variety are exactly its well-studied \emph{polar degrees}:

For an irreducible subvariety $X \subset \PP^n$, an integer $\ell \in \lbrace 0, 1, \ldots, n-1 \rbrace$, and a projective subspace $V \subset \PP^n$ of dimension $\ell-1$, the $\ell$-th \emph{polar variety} of $X$ with respect to $V$ is
$$
P_\ell(X,V) := \overline{\left\lbrace x \in \Reg(X) \mid \dim (V \cap \TT_{X,x}) \geq \ell - \codim X \right\rbrace} \subset X.
$$
There is an integer $\delta_\ell(X)$ which is equal to the degree of $P_\ell(X,V)$ for almost all $V$. This integer $\delta_\ell(X)$ is known as the $\ell$-th \emph{polar degree} of $X$. 
It is strictly positive if and only if $\codim X - 1 \leq \ell \leq \dim X^\vee$.
These and many other properties of polar degrees can be found in \cite{piene} or \cite{holme}.

\begin{thm}[{\cite{coisotropicHS}}]
\label{thm:coisotropicHSdimension}
Let $X \subset \PP^n$ be an irreducible subvariety.
\begin{enumerate}
\item The $\ell$-th higher associated variety $\mathcal{G}_\ell(X)$ has codimension one in $\Gr(\ell, \PP^n)$ if and only if $\codim X - 1 \leq \ell \leq \dim X^\vee$. $\quad\quad\quad\quad\quad\quad\quad\quad\quad$
(Note that this coincides with the range of indices where the polar degrees of $X$ are positive.)
\item If $\delta_\ell(X) > 0$, then $\delta_\ell(X) = \deg \mathcal{G}_\ell(X)$.
\end{enumerate}
\end{thm}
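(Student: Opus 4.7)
The plan is to analyze $\mathcal{G}_\ell(X)$ through an incidence correspondence built from the conormal variety of $X$, using biduality (Lemma~\ref{lem:dual}) to handle the upper bound symmetrically with the lower one. Setting $k := \dim X$, I would introduce
\begin{align*}
J_\ell := \overline{\{ (x, H, L) \in \Reg(X) \times (\PP^n)^\ast \times \Gr(\ell, \PP^n) \mid \TT_{X,x} \subset H,\ x \in L \subset H \}},
\end{align*}
with its natural projections. By construction the image of $p_\Gr : J_\ell \to \Gr(\ell, \PP^n)$ is $\mathcal{G}_\ell(X)$. Factoring through the conormal variety $\mathrm{Con}(X) \subset \Reg(X) \times (\PP^n)^\ast$, which has pure dimension $n-1$, and observing that the fiber over $(x,H) \in \mathrm{Con}(X)$ is a Grassmannian $\{L \mid x \in L \subset H\}$ of dimension $\ell(n-1-\ell)$, I obtain $\dim J_\ell = (\ell+1)(n-\ell) - 1$. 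So $\mathcal{G}_\ell(X)$ is a hypersurface precisely when $p_\Gr$ is generically finite onto its image.

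For the necessity of the bounds in part (i), every $L \in \mathcal{G}_\ell(X)$ meets $X$, so $\mathcal{G}_\ell(X)$ is contained in the closed locus of $\ell$-planes meeting $X$, which has codimension $\max(0, n-k-\ell)$; being a hypersurface forces $\ell \geq n-k-1 = \codim X - 1$. For the upper bound, Lemma~\ref{lem:dual} identifies $\mathcal{G}_\ell(X) \isom \mathcal{G}_{n-\ell-1}(X^\vee)$, so the lower bound applied to $X^\vee$ gives $n-\ell-1 \geq n - \dim X^\vee - 1$, i.e.\ $\ell \leq \dim X^\vee$.

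Sufficiency I would establish by checking that, for $\ell$ in the asserted range, $p_\Gr$ is generically finite. At the lower extreme $\ell = \codim X - 1$, a generic $L \in \mathcal{G}_\ell(X)$ is the classical Chow hypersurface: $L$ meets $X$ in a single smooth point $x$, and $\dim(L + \TT_{X,x}) = n - 1$ pins down $H$ uniquely. The upper extreme $\ell = \dim X^\vee$ follows by applying the lower extreme to $X^\vee$ via Lemma~\ref{lem:dual}. For intermediate $\ell$, I would pair a smooth $H^\vee \in \Reg(X^\vee)$ with its unique contact point $x \in \Reg(X)$ via reflexivity, choose an $L$ with $x \in L \subset H$ and $L + \TT_{X,x} = H$, and verify by a tangent-space computation based on Corollary~\ref{cor:keyLemmaTangentCorrespondence} that $p_\Gr$ is étale at $(x, H, L)$, exhibiting a generic fiber of cardinality one.

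For part (ii), I would compute $\deg \mathcal{G}_\ell(X)$ as its intersection number with a generic pencil $\{L : V \subset L \subset W\}$, where $V \subset W$ are linear subspaces of $\PP^n$ of dimensions $\ell-1$ and $\ell+1$, since this pencil is a line under the Plücker embedding. An $L$ in the pencil lies in $\mathcal{G}_\ell(X)$ precisely when $L = V + \spann(x)$ for some $x \in X \cap W$ with $\dim(V \cap \TT_{X,x}) \geq \ell - \codim X$, i.e.\ $x \in P_\ell(X, V) \cap W$. Since $\dim P_\ell(X, V) = n - \ell - 1$ generically and $W$ has the complementary dimension, a generic $W$ meets the polar variety transversally in $\deg P_\ell(X, V) = \delta_\ell(X)$ points, yielding $\deg \mathcal{G}_\ell(X) = \delta_\ell(X)$. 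I expect the main obstacle to be the sufficiency direction in (i) for interior values of $\ell$: the naive dimension counts using $L^\vee \cap X^\vee$ are tight only at the extremes, so a careful local argument at a general smooth pair in the conormal variety, combined with biduality, is needed to confirm that the fiber of $p_\Gr$ over a general point of $\mathcal{G}_\ell(X)$ is truly zero-dimensional and not artificially inflated by components of the correspondence one has overlooked.
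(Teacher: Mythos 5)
The paper itself does not prove Theorem~\ref{thm:coisotropicHSdimension}; it imports the statement from \cite{coisotropicHS} with no argument, so there is nothing in-text to compare your proposal against, and I can only judge it on its own terms. Most of your skeleton is sound and is essentially the standard argument: the incidence variety $J_\ell$ fibred over the conormal variety does have dimension $(\ell+1)(n-\ell)-1$; the necessity of $\ell \geq \codim X - 1$ via containment in the codimension-$(n-\dim X-\ell)$ locus of $\ell$-planes meeting $X$ is correct; and the upper bound follows by applying this to $X^\vee$, for which the precise tool is Lemma~\ref{lem:extendedDuality} (proved in the paper independently of this theorem, so no circularity). The polar-variety computation in part (ii) is also the right mechanism, granting the standard facts that $\dim P_\ell(X,V) = n-\ell-1$ for generic $V$ and the usual transversality statements for a generic $W$.

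The genuine gap is the one you flag yourself: sufficiency in part (i) for interior $\ell$. Asserting that $p_{\Gr}$ is \'etale at a well-chosen point of $J_\ell$ \emph{is} the content of the theorem there, and nothing in your write-up establishes it; the difficulty is that for $\ell + \dim X \geq n$ a general $L$ with $x \in L \subset H$ meets $\TT_{X,x}$ in a positive-dimensional space, so one must genuinely rule out that $x$ can move inside $X \cap L$ while the tangency condition persists. You do not need a separate local computation, though: your own part (ii) closes the gap. The pencil $\lbrace L : V \subset L \subset W \rbrace$ is a general member of the covering family of lines of $\Gr(\ell,\PP^n)$ under the Pl\"ucker embedding, and your identification of its intersection with $\mathcal{G}_\ell(X)$ with $P_\ell(X,V)\cap W$ shows that this intersection is finite and nonempty exactly when $\delta_\ell(X)>0$. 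A general line of this family misses any subvariety of codimension at least two, the dimension of $J_\ell$ already forces $\codim \mathcal{G}_\ell(X) \geq 1$, and a line not contained in a hypersurface of the Grassmannian must meet it; so nonemptiness of the general line section is equivalent to codimension exactly one, and combined with the quoted positivity range of the polar degrees this yields both directions of part (i) at once. I would therefore restructure the proof so that the pencil computation comes first and part (i) is deduced from it, rather than leaving the \'etale claim as an unexecuted promise.
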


\begin{example}
The Chow hypersurface of a subvariety $X \subset \PP^n$ is $\mathcal{G}_{\codim X - 1}(X)$. Its degree equals the degree of $X$.
$\hfill\diamondsuit$
\end{example}

\subsection{Segre varieties}
Since the elements of the tangent and conormal spaces of subvarieties of Grassmannians are homomorphisms, we can consider their rank. 
In this article, homomorphisms of rank one play a central role.
For two vector spaces $U$ and $W$, we denote by $\Seg(U,W)$ the projectivization of $\lbrace \varphi \in \Hom(U,W) \mid \rank\,\varphi \leq 1 \rbrace$. 
This \emph{Segre variety} has dimension $d+e$ and degree $\binom{d+e}{d}$, where $d := \dim U - 1$ and $e := \dim W-1$.
It has two rulings by maximal projective subspaces:
one ruling consists of the projectivizations of all \emph{$\alpha$-spaces},
and the other ruling of the projectivizations of all \emph{$\beta$-spaces}, which are defined as follows:

\begin{defn}
\label{def:alphaBetaSpace}
Let $U$ and $W$ be vector spaces.
For a linear hyperplane $u \subset U$, we define the \emph{$\alpha$-space} of $u$ as
\begin{align*}
E_\alpha(u) := \lbrace \varphi \in \Hom(U,W) \mid u \subset \ker \varphi \rbrace.
\end{align*}
Analogously, the \emph{$\beta$-space} of a one-dimensional linear subspace $w \subset W$ is
\begin{align*}
E_\beta(w) := \lbrace \varphi \in \Hom(U,W) \mid \im \varphi \subset w \rbrace.
\end{align*}
\end{defn}

\begin{example}
Let $U := \CC^4$ and $W := \CC^3$. We pick bases $ (e_1, \ldots, e_4 )$ and $(f_1, \ldots, f_3)$ of $U$ and $W$, respectively. With respect to these bases we identify $\Hom(U,W)$ with $\CC^{3 \times 4}$.
For $u := \spann \lbrace e_1, e_2, e_3 \rbrace$ and $w := \spann \lbrace f_1 \rbrace$, we have that $E_\alpha(u)$ and $E_\beta(w)$ consist of all matrices of the form
\begin{align*}
\left[ \begin{array}{cccc}
0 & 0 & 0 & \ast \\
0 & 0 & 0 & \ast \\
0 & 0 & 0 & \ast 
\end{array} 
\right]
\quad\quad \text{ resp. } \quad\quad
\left[ \begin{array}{cccc}
\ast & \ast & \ast & \ast \\
0 & 0 & 0 & 0 \\
0 & 0 & 0 & 0 
\end{array} 
\right].
\end{align*}

\vspace*{-8mm}
$\hfill\diamondsuit$
\end{example}

\begin{rem}
The isomorphism 
\begin{align*}
\Hom(U,W) &\longrightarrow \Hom(W^\ast, U^\ast), \\
\varphi &\longmapsto \varphi^\ast
\end{align*}
maps the $\alpha$-space $E_\alpha(u)$ to the $\beta$-space $E_\beta((U/u)^\ast)$.
Dually, it maps the $\beta$-space $E_\beta(w)$ to the $\alpha$-space $E_\alpha((W/w)^\ast)$.
$\hfill\diamondsuit$
\end{rem}

Note that the notions \emph{rank}, \emph{kernel} and \emph{image} are well-defined for elements in $\PP(\Hom(U,W))$.
Thus, for $\varphi \in \PP(\Hom(U,W))$, we will usually write $\rank \, \varphi$, $\ker \varphi$ and $\im \varphi$, respectively.

\subsection{Grassmann secant varieties}
\label{ssec:grassSec}

We define the following analogue of secant varieties in Grassmannians.
For an irreducible variety $X \subset \PP^n$ and $k \in \ZZ_{\geq 0}$, we set
\begin{align*}
\Sec^0_k(X) := \left\lbrace
L \in \Gr(k, \PP^n) \mid L = \spann (L \cap X)
\right\rbrace
\end{align*}
and let $\Sec_k(X) \subset \Gr(k, \PP^n)$ denote its Zariski closure, called the \emph{$k$-th Grassmann secant variety of $X$}.
Grassmann secant varieties have recently played a role in the study of tensor rank and Waring's problem~\cite{landsberg, waring1, waring2, waring3}, 
but they have also been studied on their own right~\cite{grSec, grSecDim, grSecSing}.
In~\cite{grSecDim}, Ciliberto and Cools show, for every irreducible and non-degenerate variety $X \subset \PP^n$, that
\begin{align}
\label{eq:dimGrSec}
\dim \Sec_k(X) \geq (k+1) \dim X, \quad\quad\quad \text{if } \Sec_k(X) \neq \Gr(k, \PP^n).
\end{align}
This implies immediately the following.

\begin{lem}
\label{lem:grSecSpans}
Let $X \subset \PP^n$ be an irreducible nondegenerate variety.
If $k \geq \codim X$, then $\Sec_k(X) = \Gr(k, \PP^n)$.
\end{lem}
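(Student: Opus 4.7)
The plan is to argue by contradiction using inequality \eqref{eq:dimGrSec}. Suppose $\Sec_k(X) \neq \Gr(k,\PP^n)$. Since $\Gr(k,\PP^n)$ is irreducible and $\Sec_k(X)$ is a proper closed subvariety, we have the strict inequality
\[
\dim \Sec_k(X) < \dim \Gr(k,\PP^n) = (k+1)(n-k).
\]
On the other hand, the Ciliberto--Cools bound \eqref{eq:dimGrSec} applies precisely because $\Sec_k(X) \neq \Gr(k,\PP^n)$, giving
\[
\dim \Sec_k(X) \geq (k+1)\dim X.
\]

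The next step is to use the hypothesis $k \geq \codim X$, i.e.\ $k \geq n - \dim X$, which rearranges to $\dim X \geq n-k$. Substituting into the lower bound yields
\[
\dim \Sec_k(X) \geq (k+1)\dim X \geq (k+1)(n-k) = \dim \Gr(k,\PP^n),
\]
contradicting the strict inequality above. Hence $\Sec_k(X) = \Gr(k,\PP^n)$.

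There is no real obstacle here; the work has all been done in establishing \eqref{eq:dimGrSec}. The only thing to double-check is the step invoking irreducibility of $\Gr(k,\PP^n)$ to pass from a proper closed containment to a strict drop in dimension, which is standard. Nondegeneracy of $X$ is needed only to apply the cited result of Ciliberto--Cools; it is not used again directly.
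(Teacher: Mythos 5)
Your proof is correct and is essentially the paper's own argument: the paper likewise deduces $\dim X \geq n-k$ from $k \geq \codim X$, concludes $(k+1)\dim X \geq \dim \Gr(k,\PP^n)$, and invokes \eqref{eq:dimGrSec}, with the contradiction via irreducibility of the Grassmannian left implicit. Your write-up just makes that last step explicit.
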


\begin{proof}
If $k \geq \codim X$,
then $\dim X \geq n-k$, so
$(k+1) \dim X \geq \dim \Gr(k, \PP^n)$ and the assertion follows from~\eqref{eq:dimGrSec}.
\end{proof}

We need one more key property of Grassmann secant varieties for our purposes.

\begin{lem}
\label{lem:grSecSum}
Consider an irreducible variety $X \subset \PP^n$.
For $L_1 \in \Sec_{k_1}(X)$ and \linebreak[4] $L_2 \in \Sec_{k_2}(X)$, we have that $L_1 + L_2 \in \Sec_{\dim(L_1+L_2)} (X)$.
\end{lem}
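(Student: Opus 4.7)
The plan is to first verify the claim in the open locus $\Sec^0$, where the statement follows from a direct computation, and then extend to the full Zariski closure by an approximation argument.

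For the open case, if $L_i \in \Sec_{k_i}^0(X)$ (so that $L_i = \spann(L_i \cap X)$), then since $(L_1+L_2)\cap X \supseteq (L_1 \cap X) \cup (L_2 \cap X)$, we would get
\[ L_1+L_2 \;=\; \spann(L_1\cap X) + \spann(L_2\cap X) \;\subseteq\; \spann((L_1+L_2)\cap X) \;\subseteq\; L_1+L_2, \]
forcing equality throughout, so $L_1+L_2 \in \Sec_k^0(X)$ where $k := \dim(L_1+L_2)$.

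For the general case, the plan is to approximate $(L_1, L_2)$ by a one-parameter family $(L_1^{(t)}, L_2^{(t)}) \in \Sec_{k_1}^0(X) \times \Sec_{k_2}^0(X)$ with the additional property that $\dim(L_1^{(t)}+L_2^{(t)}) = k$ for $t \ne 0$. The open case then gives $L_1^{(t)}+L_2^{(t)} \in \Sec_k^0(X)$. Since $L_1^{(t)}+L_2^{(t)}$ contains $L_1^{(t)}$ and $L_2^{(t)}$, which converge to $L_1$ and $L_2$ respectively, the limit in $\Gr(k, \PP^n)$ must contain $L_1+L_2$, and the dimensions match, so the limit equals $L_1+L_2$. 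This places $L_1+L_2$ in $\Sec_k(X)$.

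The main obstacle I anticipate is producing a family with the constant-dimension property, since a general deformation of $(L_1, L_2)$ will typically shrink $L_1^{(t)} \cap L_2^{(t)}$ and thereby enlarge the sum beyond dimension $k$. The natural candidate is a generic curve through $(L_1, L_2)$ inside the closed subvariety
\[ Z := \{(L_1', L_2') \in \Sec_{k_1}(X) \times \Sec_{k_2}(X) : \dim(L_1'+L_2') \leq k\}, \]
on which, by upper semicontinuity of $\dim(L_1' \cap L_2')$, equality holds generically on any component through $(L_1, L_2)$. The delicate step is to verify that some such component meets the dense open subset $\Sec_{k_1}^0(X) \times \Sec_{k_2}^0(X)$; I would handle this by lifting through the dominant rational span map $X^{k_1+1} \times X^{k_2+1} \dashrightarrow \Sec_{k_1}(X) \times \Sec_{k_2}(X)$, choosing lifts of $L_1$ and $L_2$ by tuples whose shared component lies in $L_1 \cap L_2 \cap X$ and then perturbing within this constraint to obtain reduced, independent tuples with the required intersection preservation.
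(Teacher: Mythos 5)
Your first two paragraphs coincide with the paper's argument: the computation on $\Sec^0_{k_1}(X)\times\Sec^0_{k_2}(X)$ is exactly the observation the paper uses for its set $U$, and your limit argument (the limit of $L_1^{(t)}+L_2^{(t)}$ contains $L_1+L_2$ and has the same dimension, hence equals it) is a correct substitute for the paper's appeal to the rational map $Z\dashrightarrow\Gr(k,\PP^n)$, $(L_1',L_2')\mapsto L_1'+L_2'$, defined on the locus $O$ where $\dim(L_1'+L_2')=k$. You have also correctly isolated the one genuinely delicate point, namely that some component of $Z$ through $(L_1,L_2)$ must meet $\Sec^0_{k_1}(X)\times\Sec^0_{k_2}(X)$. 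The paper disposes of this in a single sentence, asserting that $U=(\Sec^0_{k_1}(X)\times\Sec^0_{k_2}(X))\cap O$ is dense in $Z$ because $\Sec^0_{k_i}(X)$ is dense in $\Sec_{k_i}(X)$ and $O$ is dense and open in $Z$; it never passes through the tuple space.

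However, the mechanism you propose for this step does not work as stated. You want to lift $L_1$ and $L_2$ to tuples in $X^{k_1+1}\times X^{k_2+1}$ whose shared components lie in $L_1\cap L_2\cap X$, and then perturb while preserving the shared part. Any tuple lying over $L_i$ in the closure of the graph of the span map consists of points of $L_i\cap X$, so a shared entry must indeed lie in $L_1\cap L_2\cap X$ --- but this set can be empty while $L_1\cap L_2$ is not. For instance, take $X$ a nondegenerate curve and $L_1,L_2$ two tangent lines meeting at a point off $X$: each $L_i$ lifts only to the doubled point of tangency, the two tuples can share nothing, and a generic perturbation to honest secant lines separates them, raising $\dim(L_1'+L_2')$ above $k$. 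Even when $L_1\cap L_2\cap X\neq\emptyset$, its points need not span $L_1\cap L_2$, so sharing them does not force $\dim(L_1'\cap L_2')\geq\dim(L_1\cap L_2)$ along the perturbation. So your construction of the constant-dimension family has a genuine gap: the intersection-preserving deformation must be produced by other means (or one falls back on the density of $U$ in $Z$ that the paper asserts directly).
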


\begin{proof}
Let $k := \dim (L_1 + L_2)$.
For $i \in \lbrace 1,2 \rbrace$, we write $S_i := \Sec_{k_i}(X)$ and $S_i^0 := \Sec^0_{k_i}(X)$.
In addition, we set $$U := \lbrace (L'_1, L'_2) \in S^0_1 \times S^0_2 \mid \dim (L'_1 + L'_2) = k \rbrace.$$
For every $(L'_1, L'_2) \in U$, we have $L'_1+L'_2 \in \Sec_k(X)$. 
Since $S^0_i$ is dense in $S_i$ and $O := \lbrace (L'_1, L'_2) \in S_1 \times S_2 \mid \dim (L'_1 + L'_2) = k \rbrace$ is a dense and open subset of $Z := \lbrace (L'_1, L'_2) \in S_1 \times S_2 \mid \dim (L'_1 + L'_2) \leq k \rbrace$, it follows that $U = (S^0_1 \times S^0_2) \cap O$ is dense in $Z$.
Thus, the Zariski closure of the image of the rational map
\begin{align*}
Z &\longdashrightarrow \Gr(k, \PP^n), \\
(L'_1, L'_2) &\longmapsto L'_1 + L'_2,
\end{align*}
which is defined on all of $O$, 
is contained in $\Sec_k(X)$.
\end{proof}

\section{Coisotropic Varieties}
\label{sec:coiso}

In this article, we are mainly interested in Grassmann secant varieties of Segre varieties.
For two vector spaces $U$ and $W$, a key player for our definition of coisotropic varieties is
\begin{align*}
\mathrm{Sec}_k(\Seg(U,W)) = \overline{\left\lbrace L \mid L = \spann(L \cap \Seg(U,W)) \right\rbrace} \subset \Gr(k, \PP(\Hom(U,W))).
\end{align*}

\begin{defn}
\label{defn:coisotropy}
An irreducible subvariety $\Sigma \subset \Gr(\ell, \PP^n)$ of codimension $c \geq 1$ is \emph{coisotropic} if, for every $L \in \Reg(\Sigma)$, 
the conormal space of $\Sigma$ at $L$ is spanned by rank one homomorphisms or is in the Zariski closure of the set of such spaces, i.e.,
$$\PP(N_{\Sigma,L}^\ast) \in \Sec_{c-1}(\Seg(\KK^{n+1}/\aff{L}, \aff{L})).$$
Moreover, $\Sigma$ is \emph{strongly coisotropic} if, for every $L \in \Reg(\Sigma)$, the rank of every homomorphism in the conormal space of $\Sigma$ at $L$ is at most one, i.e., 
$$\PP(N_{\Sigma,L}^\ast) \subset \Seg(\KK^{n+1}/\aff{L}, \aff{L}).$$
\end{defn}

First, we prove Proposition~\ref{prop:keyPropCoiso}. Its two parts follow essentially from Lemmas~\ref{lem:grSecSpans} and~\ref{lem:grSecSum}.
Let us recall that two subvarieties $X_1, X_2 \subset Y$ intersect \emph{transversely at a point $x \in X_1 \cap X_2$} if $X_1$, $X_2$ and $Y$ are all smooth at $x$ and $T_{X_1,x} + T_{X_2,x} = T_{Y,x}$.
The subvarieties $X_1, X_2 \subset Y$ are said to intersect \emph{generically transversely} if
every irreducible component of $X_1 \cap X_2$ contains a point where $X_1$ and $X_2$ are transverse.

\begin{proof}[Proof of Proposition~\ref{prop:keyPropCoiso}]
For the first part, let $\Sigma \subset \Gr(\ell, \PP^n)$ be a subvariety with $\dim \Sigma \leq n-1$.
The codimension $c$ of $\Sigma$ in $\Gr(\ell, \PP^n)$ is at least $(\ell+1)(n-\ell)-(n-1) = \ell(n-\ell-1)+1$. 
Moreover, the codimension of $\Seg(\KK^{n+1}/\aff{L}, \aff{L})$ in $\PP (\Hom(\KK^{n+1}/\aff{L}, \aff{L}))$ is $\ell(n-\ell-1)$ for every $L \in \Gr(\ell, \PP^n)$.
Applying Lemma~\ref{lem:grSecSpans} to $k := c-1 \geq \ell(n-\ell-1)$ and $X := \Seg(\KK^{n+1}/\aff{L}, \aff{L}) \subset \PP (\Hom(\KK^{n+1}/\aff{L}, \aff{L}))$
yields $$\Sec_{c-1}(\Seg(\KK^{n+1}/\aff{L}, \aff{L})) = \Gr(c-1, \PP(\Hom(\KK^{n+1}/\aff{L}, \aff{L}))).$$
In particular, $\Sigma$ is coisotropic.

\smallskip
For the second part, let $\Sigma$ be a non-empty irreducible component of $\Sigma_1 \cap \Sigma_2$, and let $L \in \Sigma$ be a general point.
Since $\Sigma_1$ and $\Sigma_2$ intersect transversely at $L$, we have that $T_{\Sigma,L} = T_{\Sigma_1, L} \cap T_{\Sigma_2,L}$ and $N_{\Sigma,L}^\ast = N_{\Sigma_1,L}^\ast + N_{\Sigma_2,L}^\ast$.
Hence, we can apply Lemma~\ref{lem:grSecSum} to the latter equality:
$$
\PP \left(N_{\Sigma,L}^\ast \right) = \PP \left(N_{\Sigma_1,L}^\ast \right)+ \PP \left(N_{\Sigma_2,L}^\ast \right) \in \Sec_{c-1}(\Seg(\KK^{n+1} / \aff{L}, \aff{L})),
$$
where $c$ is the codimension of $\Sigma$ in $\Gr(\ell, \PP^n)$. 
Since $L \in\Sigma$ was chosen generally, we have shown $\PP \left(N_{\Sigma,L}^\ast \right) \in \Sec_{c-1}(\Seg(\KK^{n+1} / \aff{L}, \aff{L}))$ for every $L \in \Reg(\Sigma)$.
\end{proof}

\subsection{Strongly Coisotropic Varieties}
\label{ssec:stronglyCoisotropic}
In this subsection, we prove Theorem~\ref{thm:stronglyCoisoFULL} and Lemma~\ref{lem:dual}.

\begin{prop}
\label{prop:associatedVarsAreStronglyCoisotropic}
For every irreducible variety $X \subset \PP^n$ and $\ell \leq \codim X-1$, the $\ell$-th higher associated variety $\mathcal{G}_\ell(X) \subset \Gr(\ell, \PP^n)$ of $X$ is strongly coisotropic.
\end{prop}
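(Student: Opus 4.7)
The plan is to realize $\mathcal{G}_\ell(X)$ as the image of a natural incidence correspondence under a birational projection, compute its tangent space at a general smooth point via Lemma~\ref{lem:tangentSpaceIncidenceGrassmannian}, and then read off the conormal space directly.

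Under the hypothesis $\ell \leq \codim X - 1$, the bound $\dim(L + \TT_{X,x}) \leq \dim X + \ell \leq n-1$ makes the non-transversality condition in Definition~\ref{def:coisotropicHS} automatic at every $x \in L \cap \Reg(X)$, so
\[
\mathcal{G}_\ell(X) = \overline{\left\{ L \in \Gr(\ell, \PP^n) \mid L \cap \Reg(X) \neq \emptyset \right\}}.
\]
I would introduce the incidence variety
\[
I := \overline{\left\{ (x, L) \in \Reg(X) \times \Gr(\ell, \PP^n) \mid x \in L \right\}}
\]
with projections $\pi_1, \pi_2$. The fibers of $\pi_1$ over $\Reg(X)$ are isomorphic to $\Gr(\ell-1, \PP^{n-1})$, so $\dim I = \dim X + \ell(n-\ell)$. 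A dimension count shows that the locus of $L$ containing at least two distinct points of $X$ has dimension at most $2 \dim X + (\ell-1)(n-\ell)$, which is strictly less than $\dim I$ precisely because $\ell < \codim X$. Hence $\pi_2$ is birational onto $\mathcal{G}_\ell(X)$, which has codimension $c := \codim X - \ell \geq 1$.

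Next, fix a general smooth point $(\aff{x}, \aff{L}) \in I$ and set $V := T_{\aff{X}, \tilde{x}}$ for a lift $\tilde{x} \in \aff{x} \setminus \{0\}$. For general $\aff{L}$ through $\aff{x}$, the bound $\dim V + \dim \aff{L} = \dim X + \ell + 2 \leq n+1$ together with the forced containment $\aff{x} \subset V \cap \aff{L}$ gives $V \cap \aff{L} = \aff{x}$. Applying Lemma~\ref{lem:tangentSpaceIncidenceGrassmannian} to the flag variety $F_{0,\ell}$, cut down by the tangency constraint $\psi \in T_{X, \aff{x}} = \Hom(\aff{x}, V/\aff{x})$, yields
\[
T_{I, (\aff{x}, \aff{L})} = \left\{ (\psi, \varphi) \;\middle\vert\; \psi \in \Hom(\aff{x}, V/\aff{x}),\ \varphi|_{\aff{x}} = (\psi \mod \aff{L}) \right\}.
\]
The transversality $V \cap \aff{L} = \aff{x}$ makes the reduction-mod-$\aff{L}$ map $\Hom(\aff{x}, V/\aff{x}) \to \Hom(\aff{x}, (V + \aff{L})/\aff{L})$ an isomorphism of $\dim(X)$-dimensional spaces, so pushing down along $\pi_2$ gives
\[
T_{\mathcal{G}_\ell(X), \aff{L}} = \left\{ \varphi \in \Hom(\aff{L}, \KK^{n+1}/\aff{L}) \;\middle\vert\; \varphi(\aff{x}) \subset (V + \aff{L})/\aff{L} \right\}.
\]
Dualizing under the trace pairing, an $\eta \in \Hom(\KK^{n+1}/\aff{L}, \aff{L})$ annihilates this tangent space if and only if $\im \eta \subset \aff{x}$ and $\ker \eta \supset (V + \aff{L})/\aff{L}$; since $\aff{x}$ is one-dimensional, every such $\eta$ has rank at most one. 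Hence $\PP(N^\ast_{\mathcal{G}_\ell(X), \aff{L}}) \subset \Seg(\KK^{n+1}/\aff{L}, \aff{L})$, as required.

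The main obstacle is the tangent space identification; once that is in place, the conormal description and the rank-one conclusion follow essentially automatically. The hypothesis $\ell \leq \codim X - 1$ enters crucially twice, first in rendering the non-transversality condition redundant so that $\mathcal{G}_\ell(X)$ is cut out by a simple incidence condition, and then in securing the clean transversality $V \cap \aff{L} = \aff{x}$ that produces the tidy description of $T_{\mathcal{G}_\ell(X), \aff{L}}$.
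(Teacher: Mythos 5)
Your proof is correct and follows essentially the same route as the paper: the paper packages your incidence-variety computation into the rational map $\varpi:\mathcal{G}_\ell(X)\dashrightarrow X$, $L\mapsto x_L$, together with Corollary~\ref{cor:keyLemmaTangentCorrespondence} (whose proof is exactly your graph construction), arriving at the same tangent space $\{\varphi\mid\varphi(\aff{x_L})\subset(\aff{\TT_{X,x_L}}+\aff{L})/\aff{L}\}$ by the same dimension count and the same rank-one description of the conormal space. Your explicit check that a general $L$ meets $X$ in a single point is a detail the paper asserts without proof, but otherwise the two arguments coincide.
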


\begin{proof}
We first observe that $\ell$ is sufficiently small such that every $\ell$-dimensional projective subspace $L \subset \PP^n$ meeting $X$ intersects $X$ non-transversely, i.e.,
$$\mathcal{G}_\ell(X) = \left\lbrace L \in \Gr(\ell, \PP^n) \mid L \cap X \neq \emptyset \right\rbrace.$$
Hence, a general $L \in \mathcal{G}_\ell(X)$ intersects $X$ at exactly one point $x_L$, which gives us a rational map
\mbox{$\varpi: \mathcal{G}_\ell(X) \dashrightarrow X$}, $L \mapsto x_L$.
By Corollary~\ref{cor:keyLemmaTangentCorrespondence}, we have $\varphi |_{\aff{x_L}} = (D_L \varpi (\varphi) \mod \aff{L})$ for a general $L \in \mathcal{G}_\ell(X)$ and every $\varphi \in T_{\mathcal{G}_\ell(X), L}$.
Since the image of $D_L \varpi (\varphi)$ is contained in $\aff{\TT_{X,x_L}} / \aff{x_L}$, the tangent space 
$T_{\mathcal{G}_\ell(X), L}$ is contained in $$V_L := \lbrace \varphi \in \Hom(\aff{L}, \KK^{n+1} / \aff{L}) \mid \varphi(\aff{x_L}) \subset (\aff{\TT_{X,x_L}}+\aff{L})/\aff{L} \rbrace.$$
Since $L$ was chosen generally, the dimension of $\TT_{X,x_L} + L$ is $\dim X + \ell$, which shows that $V_L$ has dimension $\ell(n-\ell)+\dim X$.
This is also the dimension of the variety $\mathcal{G}_\ell(X)$.
Hence, we have derived $T_{\mathcal{G}_\ell(X), L} = V_L$.
A similar dimension count yields
\begin{small}
\begin{align*}
%\label{eq:tangentGeneralChow}
V_L =
%\left\lbrace \varphi \in \Hom(\aff{L}, V/\aff{L}) \mid \varphi(\aff{x_L}) \subset (\aff{\TT_{X,x_L}}+\aff{L}) / \aff{L} \right\rbrace \\
%&= \left\lbrace \varphi \in \Hom(\aff{L}, V/\aff{L}) \mid \aff{x_L} \subset \ker \varphi \right\rbrace
%+ \left\lbrace \varphi \in \Hom(\aff{L}, V/\aff{L}) \mid \im \varphi \subset (\aff{\TT_{X,x_L}}+\aff{L}) / \aff{L} \right\rbrace,
 \left\lbrace \varphi \in T_{\Gr(\ell, \PP^n),L} \mid \aff{x_L} \subset \ker \varphi \right\rbrace
+ \left\lbrace \varphi \in T_{\Gr(\ell, \PP^n),L} \mid \im \varphi \subset (\aff{\TT_{X,x_L}}+\aff{L}) / \aff{L} \right\rbrace.
\end{align*}
\end{small}
This shows that the conormal space of $\mathcal{G}_\ell(X)$ at a general point $L \in \mathcal{G}_\ell(X)$ contains only homomorphisms of rank one:
\begin{align}
\label{eq:conormalSpaceG_l(X)}
\hspace*{-3mm}
N^\ast_{\mathcal{G}_\ell(X), L} = \left\lbrace \varphi \in \Hom(\KK^{n+1}/\aff{L}, \aff{L}) \mid  (\aff{\TT_{X,x_L}}+\aff{L}) / \aff{L} \subset \ker  \varphi, \, \im \varphi \subset \aff{x_L} \right\rbrace. 
\hspace*{-2mm}
\end{align}
Since $L$ was chosen generally, we have shown $\PP(N^\ast_{\mathcal{G}_\ell(X), L}) \subset \Seg(\KK^{n+1}/\aff{L},\aff{L})$ for every $L \in \Reg(\mathcal{G}_\ell(X))$.
\end{proof}

We will make use of the following identification of the higher associated varieties of a projective variety $X$ and of its dual variety $X^\vee$.
Note that the following lemma, together with Theorem~\ref{thm:stronglyCoisoFULL}, yields Lemma~\ref{lem:dual}.

\begin{lem}
\label{lem:extendedDuality}
For an irreducible variety $X \subset \PP^n$ and $\ell \in \lbrace 0, 1, \ldots, n-1 \rbrace$, we have
$$ \mathcal{G}_\ell(X)^\perp = \mathcal{G}_{n-\ell-1} (X^\vee). $$
\end{lem}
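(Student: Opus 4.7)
The plan is to exploit the biduality theorem through the conormal variety of $X$. I would first prove the inclusion $\mathcal{G}_\ell(X)^\perp \subset \mathcal{G}_{n-\ell-1}(X^\vee)$, and then deduce the reverse by invoking $(X^\vee)^\vee = X$ and the fact that $\perp$ is an involutive, containment-preserving bijection of Grassmannians.

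For the first inclusion, take a general $L \in \mathcal{G}_\ell(X)$ and produce $x \in L \cap \Reg(X)$ together with a hyperplane $H \subset \PP^n$ satisfying $L + \TT_{X,x} \subset H$. The conormal variety
$$CN(X) := \overline{\{(x, H^\vee) : x \in \Reg(X),\ \TT_{X,x} \subset H\}} \subset X \times X^\vee$$
is irreducible and projects dominantly to $X^\vee$, so one can choose the pair $(x, H)$ compatible with $L$ so that additionally $H^\vee \in \Reg(X^\vee)$. Then $H^\vee \in X^\vee$ (as $H$ is tangent to $X$ at $x$) and $H^\vee \in L^\vee$ (as $H \supset L$), so $L^\vee$ meets $X^\vee$ at $H^\vee$. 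Biduality applied at $(x, H^\vee)$ yields $\TT_{X^\vee, H^\vee} \subset x^\vee$, while $x \in L$ forces $L^\vee \subset x^\vee$; combining these shows $L^\vee + \TT_{X^\vee, H^\vee} \subset x^\vee \subsetneq (\PP^n)^\ast$, so $L^\vee \in \mathcal{G}_{n-\ell-1}(X^\vee)$. Taking Zariski closures completes this inclusion.

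For the reverse inclusion, applying the first inclusion to $X^\vee$ in place of $X$ yields $\mathcal{G}_{n-\ell-1}(X^\vee)^\perp \subset \mathcal{G}_{\ell}((X^\vee)^\vee) = \mathcal{G}_\ell(X)$, and one more application of $\perp$ gives $\mathcal{G}_{n-\ell-1}(X^\vee) \subset \mathcal{G}_\ell(X)^\perp$.

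The main technical obstacle I anticipate is the genericity claim used in the first inclusion—namely, that for a general $L \in \mathcal{G}_\ell(X)$ the tangent hyperplane $H$ can be chosen with $H^\vee \in \Reg(X^\vee)$. This should follow from a routine dimension argument on the incidence variety $\{(L, x, H^\vee) : x \in L \cap \Reg(X),\ L \subset H \supset \TT_{X,x}\}$, which fibers over $CN(X)$ with equidimensional fibers and dominates $\mathcal{G}_\ell(X)$; the preimage of the dense open locus where $H^\vee \in \Reg(X^\vee)$ is thus dense and meets the generic $L$-fiber.
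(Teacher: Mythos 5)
Your proof is correct and follows essentially the same route as the paper: both arguments rest on biduality applied over the conormal variety together with the elementary translations $x \in L \subset H \iff H^\vee \in L^\vee \subset x^\vee$. The paper packages this as the single identity $\mathcal{G}_\ell(X) = \overline{\bigcup_{(x,y)} \mathcal{G}_\ell(x,y)}$ over the smooth part of the conormal variety and applies $\perp$ to both sides, whereas you prove one inclusion for general $L$ (making explicit the genericity of $H^\vee \in \Reg(X^\vee)$, which the paper leaves implicit) and bootstrap the reverse inclusion via $(X^\vee)^\vee = X$; the mathematical content is the same.
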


\begin{proof}
For points $x \in \PP^n$ and $y \in (\PP^n)^\ast$ such that $x \in y^\vee$, we define $$\mathcal{G}_\ell(x,y) := \lbrace L \in \Gr(\ell, \PP^n) \mid x \in L \subset y^\vee \rbrace.$$
We clearly have that $\mathcal{G}_\ell(x,y)^\perp = \mathcal{G}_{n-\ell-1}(y,x)$.
Moreover, we consider the following open subset of the conormal variety of $X$:
$$\mathcal{U}_{X, X^\vee}  :=  \left\lbrace (x,y) \in \Reg(X) \times \Reg(X^\vee) \mid \TT_{X,x} \subset y^\vee \right\rbrace.$$
The biduality theorem implies the equality $\mathcal{U}_{X^\vee, X} = \lbrace (y,x) \mid (x,y) \in \mathcal{U}_{X, X^\vee} \rbrace$.
Since the condition $\dim(L + \TT_{X,x})< n$ in the definition of $\mathcal{G}_\ell(X)$ means that $L$ is contained in a tangent hyperplane of $X$ at $x$, we have $\mathcal{G}_\ell(X) = \overline{\bigcup_{(x,y) \in \mathcal{U}_{X,X^\vee}}\mathcal{G}_\ell(x,y)}$.
Hence, 
\begin{align*}
\mathcal{G}_\ell(X)^\perp = \overline{\bigcup_{(x,y) \in \mathcal{U}_{X,X^\vee}}\mathcal{G}_\ell(x,y)^\perp}
= \overline{\bigcup_{(y,x) \in \mathcal{U}_{X^\vee,X}}\mathcal{G}_{n-\ell-1}(y,x)} 
= \mathcal{G}_{n-\ell-1}(X^\vee).
\end{align*}
\end{proof}

\begin{cor}
\label{cor:stronglyCoisotropicVarietiesAssociatedToX}
The $\ell$-th higher associated variety $\mathcal{G}_\ell(X)$ of an irreducible variety $X \subset \PP^n$ is strongly coisotropic for each $\ell \in \lbrace 0, 1, \ldots, n-1 \rbrace$.
\end{cor}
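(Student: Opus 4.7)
The plan is to bootstrap Proposition~\ref{prop:associatedVarsAreStronglyCoisotropic} (which only handles $\ell\leq\codim X - 1$) to cover all $\ell\in\{0,1,\ldots,n-1\}$, by combining it with the duality Lemma~\ref{lem:extendedDuality} and with the classical hypersurface case of Theorem~\ref{thm:stronglyCoisoFULL}. I would split the range of $\ell$ into three subranges and dispatch each separately.

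For $\ell\leq\codim X -1$, Proposition~\ref{prop:associatedVarsAreStronglyCoisotropic} immediately yields that $\mathcal{G}_\ell(X)$ is strongly coisotropic. For $\ell\geq\dim X^\vee$, set $\ell':=n-\ell-1$; then $\ell' \leq n - \dim X^\vee - 1 = \codim X^\vee - 1$, so Proposition~\ref{prop:associatedVarsAreStronglyCoisotropic} applied to $X^\vee\subset(\PP^n)^\ast$ shows that $\mathcal{G}_{\ell'}(X^\vee)$ is strongly coisotropic. By Lemma~\ref{lem:extendedDuality}, $\mathcal{G}_\ell(X)^\perp=\mathcal{G}_{\ell'}(X^\vee)$, and the identification of conormal spaces described in Remark/Definition~\ref{rem:dualityConormalSpaces} sends each conormal vector $\psi$ of $\mathcal{G}_\ell(X)$ to its dual $\psi^\ast$, a map which preserves rank. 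Hence every element of $N^\ast_{\mathcal{G}_\ell(X),\aff{L}}$ also has rank at most one, and $\mathcal{G}_\ell(X)$ is strongly coisotropic.

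For the intermediate range $\codim X\leq \ell\leq \dim X^\vee -1$, Theorem~\ref{thm:coisotropicHSdimension}(1) ensures that $\mathcal{G}_\ell(X)$ is a hypersurface in $\Gr(\ell,\PP^n)$. The classical theorem of Gel'fand, Kapranov, and Zelevinsky (the hypersurface case of Theorem~\ref{thm:stronglyCoisoFULL}, recalled via Definition~\ref{def:coisotropicHS}) then asserts that $\mathcal{G}_\ell(X)$ is a coisotropic hypersurface, i.e., its one-dimensional conormal space is spanned by a rank-one homomorphism. But every element of a one-dimensional space spanned by such a homomorphism is itself of rank at most one, so $\mathcal{G}_\ell(X)$ is in fact strongly coisotropic in the sense of Definition~\ref{defn:coisotropy}. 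No essential obstacle is foreseen: the three cases together cover $\{0,1,\ldots,n-1\}$ and each reduces directly to an ingredient already established above; the only subtle point is noticing that in the hypersurface range the notions of coisotropy and strong coisotropy coincide, allowing the classical GKZ result to fill the middle range cleanly.
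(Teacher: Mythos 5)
Your proof is correct and follows essentially the same three-way case split as the paper (Proposition~\ref{prop:associatedVarsAreStronglyCoisotropic} for small $\ell$, the GKZ hypersurface theorem for the middle range, and duality via Lemma~\ref{lem:extendedDuality} and Remark/Definition~\ref{rem:dualityConormalSpaces} for large $\ell$); the only differences are cosmetic, namely which subrange absorbs the boundary values $\ell=\codim X-1$ and $\ell=\dim X^\vee$. Your explicit remark that coisotropy and strong coisotropy coincide for hypersurfaces, because a one-dimensional conormal space spanned by a rank-one homomorphism contains only homomorphisms of rank at most one, is a point the paper leaves implicit and is worth stating.
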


\begin{proof}
If $\codim X -1 \leq \ell \leq \dim X^\vee$, then $\mathcal{G}_\ell(X)$ is a hypersurface in its ambient Grassmannian $\Gr(\ell, \PP^n)$ by Theorem~\ref{thm:coisotropicHSdimension}.
Hence, it is coisotropic by~\cite[Ch.~4, Thn.~3.14]{gkz} or~\cite{coisotropicHS}.
If $\ell < \codim X -1$, then $\mathcal{G}_\ell(X)$ is strongly coisotropic by Proposition~\ref{prop:associatedVarsAreStronglyCoisotropic}.
Finally, if $\ell > \dim X^\vee$, then we consider $\mathcal{G}_\ell(X)^\perp = \mathcal{G}_{n-\ell-1} (X^\vee)$ (by Lemma~\ref{lem:extendedDuality}) and observe that $n-\ell-1 < \codim X^\vee -1$.
Thus, $\mathcal{G}_{n-\ell-1} (X^\vee)$ is strongly coisotropic by Proposition~\ref{prop:associatedVarsAreStronglyCoisotropic}, which shows that $\mathcal{G}_\ell(X)$ is also strongly coisotropic by Remark/Definition~\ref{rem:dualityConormalSpaces}.
\end{proof}

In fact, every strongly coisotropic variety is associated to a projective variety as in Corollary~\ref{cor:stronglyCoisotropicVarietiesAssociatedToX}.
We prove this assertion in the remainder of this subsection. 

\begin{thm}
\label{thm:stronglyCoisotropic}
For an irreducible strongly coisotropic variety $\Sigma \subset \Gr(\ell, \PP^n)$, there is an irreducible variety $X \subset \PP^n$ such that $\Sigma = \mathcal{G}_\ell(X)$.
\end{thm}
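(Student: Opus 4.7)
The plan is to reconstruct $X$ from the conormal data of $\Sigma$ at a general smooth point, and then to verify $\Sigma = \mathcal{G}_\ell(X)$ by matching tangent spaces and dimensions. If $\Sigma$ is a hypersurface in $\Gr(\ell,\PP^n)$, the statement is the theorem of Gel'fand, Kapranov and Zelevinsky, so I assume $\codim \Sigma \geq 2$ throughout.

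The first key step establishes a uniform dichotomy. At a general $L \in \Sigma$, the projectivized conormal space $\PP(N^\ast_{\Sigma,L})$ is a positive-dimensional linear subspace of $\Seg(\KK^{n+1}/\aff{L}, \aff{L})$. By the classical fact that any linear subspace of a Segre variety is contained in an $\alpha$-space or in a $\beta$-space, and by irreducibility of $\Sigma$, exactly one alternative holds on an open dense subset of $\Sigma$. Via the involution $\Sigma \mapsto \Sigma^\perp$ of Remark/Definition~\ref{rem:dualityConormalSpaces}, which swaps $\alpha$- and $\beta$-spaces, together with Lemma~\ref{lem:extendedDuality}, I reduce to the $\beta$-space case. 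In that case, at a general $L$ all conormal vectors have image in a common line $\aff{x_L} \subset \aff{L}$; this gives a rational map $\pi: \Sigma \dashrightarrow \PP^n$, $L \mapsto x_L$, and I define $X$ to be the Zariski closure of its image.

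To conclude, set $V_L := \bigcap_{\varphi \in N^\ast_{\Sigma,L}} \ker \varphi \subset \KK^{n+1}/\aff{L}$ and let $P_L \subset \PP^n$ be its preimage, so $L \subset P_L$ and $\dim P_L = n - \codim \Sigma$. A direct trace computation gives
\[
T_{\Sigma,L} = \bigl\{ \psi \in \Hom(\aff{L}, \KK^{n+1}/\aff{L}) \,\big|\, \psi(\aff{x_L}) \subset V_L \bigr\},
\]
so $\dim \Sigma = \ell(n-\ell) + \dim V_L$. Corollary~\ref{cor:keyLemmaTangentCorrespondence} applied to $\pi$ then shows that $D_L \pi$ has $\ell(n-\ell)$-dimensional kernel and image inside $\Hom(\aff{x_L}, \aff{P_L}/\aff{x_L})$; consequently $\dim X = \dim V_L$ and $\TT_{X,x_L} \subset P_L$, giving $L + \TT_{X,x_L} \subsetneq \PP^n$ and therefore $\Sigma \subset \mathcal{G}_\ell(X)$. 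Finally, since $\ell \leq \codim X - 1$, the variety $\mathcal{G}_\ell(X) = \{L : L \cap X \neq \emptyset\}$ is irreducible of dimension $\dim X + \ell(n-\ell) = \dim \Sigma$, forcing $\Sigma = \mathcal{G}_\ell(X)$. The main obstacle I anticipate is the uniformity of the dichotomy in the first step: one must check that each of the two conditions ``$\PP(N^\ast_{\Sigma,L})$ lies in some $\alpha$-space'' and ``\ldots in some $\beta$-space'' is Zariski-closed on $\Sigma$, so that irreducibility selects a single global case and the assignment $L \mapsto x_L$ is well-defined and regular on a dense open subset.
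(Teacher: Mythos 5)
Your overall route is the same as the paper's: reduce to $\codim \Sigma \geq 2$, use the $\alpha$/$\beta$ dichotomy for linear spaces on the Segre variety (Lemma~\ref{lem:alphaBetaSegre}) and the duality $\Sigma \mapsto \Sigma^\perp$ to reduce to $\beta$-type, define $x_L$ as the common image line of the conormal vectors, let $X$ be the closure of the image of $L \mapsto x_L$, and compute $T_{\Sigma,L} = \lbrace \psi \mid \psi(\aff{x_L}) \subset \aff{P_L}/\aff{L}\rbrace$. All of that matches the paper, and your worry about uniformity of the dichotomy is handled there by the uniqueness statement in Lemma~\ref{lem:alphaBetaSegre} together with irreducibility of $\Sigma$; it is not the real difficulty.

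The genuine gap is the sentence claiming that $D_L\pi$ has $\ell(n-\ell)$-dimensional kernel, hence $\dim X = \dim V_L$. Corollary~\ref{cor:keyLemmaTangentCorrespondence} gives you only the relation $\varphi|_{\aff{x_L}} = (D_L\pi(\varphi) \mod \aff{L})$. From this you get two one-sided bounds: (i) if $D_L\pi(\varphi)=0$ then $\aff{x_L} \subset \ker\varphi$, so $\dim\ker D_L\pi \leq \ell(n-\ell)$ and $\dim X \geq n - \codim\Sigma - \ell$; and (ii) the image of every $D_L\pi(\varphi)$ lies in $\aff{P_L}/\aff{x_L}$, so $\TT_{X,x_L} \subset P_L$ and $\dim X \leq n - \codim\Sigma$. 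Nothing forces the kernel to be exactly $\ell(n-\ell)$-dimensional: a $\varphi$ with $\varphi|_{\aff{x_L}} = 0$ need not lie in $\ker D_L\pi$, since $D_L\pi(\varphi)$ is only constrained modulo $\aff{L}$ and may have image in $\aff{L}/\aff{x_L}$. So a priori $\dim X$ can sit anywhere in a range of length $\ell$, and your final dimension count $\dim\mathcal{G}_\ell(X) = \dim X + \ell(n-\ell) = \dim\Sigma$ only works at the bottom of that range. Closing this gap is the substantial second half of the paper's proof of Theorem~\ref{thm:stronglyCoisotropicBeta}: one assumes $\dim X > n-\codim\Sigma-\ell$ and derives a contradiction in two separate cases ($\dim X = n-\codim\Sigma$ and the intermediate range), by bounding $\dim\Sigma$ via the auxiliary parametrizations $\Sigma'$ and $\Sigma''$ of spaces meeting $X$ with excess tangency $\delta$; both contradictions use $\codim\Sigma \geq 2$ in an essential way. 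Without this argument your proof is incomplete.
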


Since this statement is already shown for coisotropic hypersurfaces in~\cite[Ch.~4, Thm.~3.14]{gkz} or~\cite{coisotropicHS}, we consider only strongly coisotropic varieties of codimension at least two in the following.
For this, we use the notion of $\alpha$- and $\beta$-spaces introduced in Definition~\ref{def:alphaBetaSpace}.

\begin{lem}
\label{lem:alphaBetaSegre}
Let $U$ and $W$ be vector spaces.
Every linear space $E$ of dimension at least two which is contained in the affine cone $\lbrace \varphi \in \Hom(U,W) \mid \rank (\varphi) \leq 1 \rbrace$ over the Segre variety $\Seg(U,W)$ is contained in a unique $\alpha$- or $\beta$-space.
\end{lem}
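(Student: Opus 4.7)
The plan is to analyze the rank of a sum of two linearly independent rank-one homomorphisms in $E$, derive a dichotomy between an ``$\alpha$-type'' and a ``$\beta$-type'' configuration, and then show that this type is forced consistently across all of $E$. First I would write every nonzero element of the cone as $u^{\ast}\otimes w$ with $u^{\ast}\in U^{\ast}\setminus\{0\}$ and $w\in W\setminus\{0\}$, and pick two linearly independent elements $\varphi_i = u_i^{\ast}\otimes w_i$ of $E$ (which exist since $\dim E \geq 2$).

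The key computation is that $\varphi_1+\varphi_2$ has image contained in $\spann(w_1,w_2)$ and kernel equal to $\ker u_1^{\ast}\cap \ker u_2^{\ast}$, so its rank equals $\dim\spann(u_1^{\ast},u_2^{\ast})$ when $w_1,w_2$ are linearly independent, and is at most $1$ when $w_1,w_2$ are dependent. Hence $\rank(\varphi_1+\varphi_2)\leq 1$ if and only if either $w_1,w_2$ are linearly dependent (Case~A) or $u_1^{\ast},u_2^{\ast}$ are linearly dependent (Case~B). These two cases cannot occur simultaneously, because simultaneous dependence would make $\varphi_1$ and $\varphi_2$ proportional.

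The next step is to propagate the case to every other element of $E$. Suppose Case~A holds and set $w := \spann(w_1) = \spann(w_2)\subset W$; I claim $E\subset E_\beta(w)$. For an arbitrary $\varphi_3 = u_3^{\ast}\otimes w_3\in E$, applying the same dichotomy to the pairs $(\varphi_1,\varphi_3)$ and $(\varphi_2,\varphi_3)$ must put at least one of them in Case~A, forcing $w_3\in w$; otherwise $u_3^{\ast}$ would lie in both $\spann(u_1^{\ast})$ and $\spann(u_2^{\ast})$, which intersect only at $0$ by independence of $u_1^{\ast},u_2^{\ast}$, contradicting $\varphi_3\neq 0$. Case~B is entirely symmetric and yields $E\subset E_\alpha(u)$ with $u := \ker u_1^{\ast} = \ker u_2^{\ast}$.

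Uniqueness is then almost automatic: in Case~A the line $w$ is recovered from $E$ as $\im\varphi_1 + \im\varphi_2$, and in Case~B the hyperplane $u$ is recovered as $\ker u_1^{\ast}\cap \ker u_2^{\ast}$. Moreover $E$ cannot lie in both an $\alpha$- and a $\beta$-space simultaneously, since $E_\alpha(u)\cap E_\beta(w)$ is one-dimensional whereas $\dim E\geq 2$. The step I expect to be the only delicate one is precisely this propagation: a priori different pairs in $E$ could land in different cases, and it is the hypothesis $\dim E\geq 2$ combined with the linear-independence argument above that rules this out.
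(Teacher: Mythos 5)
Your proof is correct and rests on the same key computation as the paper's: two linearly independent rank-one homomorphisms with non-proportional kernels \emph{and} non-proportional images sum to a homomorphism of rank two, and uniqueness follows from the intersection dimensions of $\alpha$- and $\beta$-spaces. The only difference is organizational---you run a direct dichotomy-and-propagation argument in tensor notation where the paper argues by contradiction, selecting from four witnesses two maps with both distinct kernels and distinct images---so the approach is essentially the same.
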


\begin{proof}
Since the dimension of the intersection of two distinct $\alpha$-spaces is zero (similarly for $\beta$-spaces) and the intersection of an $\alpha$- with a $\beta$-space is one-dimensional, it is enough to show that $E$ is contained in some $\alpha$- or $\beta$-space.

If $E$ would neither be contained in an $\alpha$- nor in a $\beta$-space, then $E$ would contain $\varphi_1, \varphi_2, \psi_1, \psi_2 \in \Hom(U,W) \setminus \lbrace 0 \rbrace$ with $\ker \varphi_1 \neq \ker \varphi_2$ and $\im \psi_1 \neq \im \psi_2$.
Among these four homomorphisms are two with both, distinct kernels \emph{and}  distinct images, say $\chi_1$ and $\chi_2$.
Let us pick $u_1 \in \ker \chi_1 \setminus \ker \chi_2$ as well as  $u_2 \in \ker \chi_2 \setminus \ker \chi_1$.
We see that $(\chi_1 + \chi_2)(u_1) \in \im \chi_2 \setminus \lbrace 0 \rbrace$ and analogously $(\chi_1 + \chi_2)(u_2) \in \im \chi_1 \setminus \lbrace 0 \rbrace$.
Thus, the image of $\chi_1 + \chi_2 \in E$ must be at least two-dimensional, which contradicts that all homomorphisms in $E$ have rank at most one.
\end{proof}

\begin{defCor}
Let $\Sigma$ be an irreducible strongly coisotropic variety of codimension at least two.
Either each conormal space at a smooth point of $\Sigma$ is contained in a unique $\alpha$-space, or each conormal space at a smooth point of $\Sigma$ is contained in a unique $\beta$-space.
In the first case, we call $\Sigma$ \emph{strongly coisotropic of $\alpha$-type}. 
In the latter case, we say that $\Sigma$ is \emph{strongly coisotropic of $\beta$-type}. 
\end{defCor}

\begin{cor}
\label{cor:dimStronglyCoiso}
Every strongly coisotropic variety in $\Gr(\ell, \PP^n)$ of $\alpha$-type has codimension at most $\ell+1$, 
and every strongly coisotropic variety of $\beta$-type has codimension at most $n-\ell$.
\end{cor}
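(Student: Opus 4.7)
The key observation is that by construction the codimension of $\Sigma$ in $\Gr(\ell,\PP^n)$ equals the dimension of the conormal space $N^\ast_{\Sigma,L}$ at any smooth point $L\in\Sigma$, viewed as a linear subspace of $\Hom(\KK^{n+1}/\aff{L},\aff{L})$. The preceding Definition/Corollary tells us that this conormal space is contained in a single $\alpha$-space (in the $\alpha$-type case) or a single $\beta$-space (in the $\beta$-type case). So the whole corollary reduces to computing the dimensions of these $\alpha$- and $\beta$-spaces in the ambient $\Hom(\KK^{n+1}/\aff{L},\aff{L})$ and applying the trivial inequality $\dim N^\ast_{\Sigma,L}\leq\dim E_{\alpha/\beta}$.

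First I would pick a smooth point $L\in\Reg(\Sigma)$ and abbreviate $U:=\KK^{n+1}/\aff{L}$ and $W:=\aff{L}$, so that $\dim U=n-\ell$ and $\dim W=\ell+1$. For the $\alpha$-type case, the conormal space lies in some $E_\alpha(u)=\{\varphi\in\Hom(U,W)\mid u\subset\ker\varphi\}$ for a hyperplane $u\subset U$. Every such $\varphi$ factors uniquely through the one-dimensional quotient $U/u$, giving the natural identification $E_\alpha(u)\cong\Hom(U/u,W)\cong W$, which has dimension $\ell+1$. Consequently
\begin{align*}
\codim\Sigma \;=\; \dim N^\ast_{\Sigma,L} \;\leq\; \dim E_\alpha(u) \;=\; \ell+1.
\end{align*}

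Analogously, in the $\beta$-type case the conormal space lies inside some $E_\beta(w)=\{\varphi\in\Hom(U,W)\mid\im\varphi\subset w\}$ for a one-dimensional subspace $w\subset W$. Such a $\varphi$ is the same datum as a homomorphism $U\to w$, so $E_\beta(w)\cong\Hom(U,w)\cong U^\ast$, whose dimension is $n-\ell$. Therefore $\codim\Sigma\leq n-\ell$.

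I do not expect any real obstacle: once the dichotomy provided by the preceding Definition/Corollary is invoked, the argument is purely a dimension count of the linear spaces $E_\alpha(u)$ and $E_\beta(w)$, which are both canonically isomorphic to $\Hom$-spaces between a one-dimensional space and either $W$ or $U$. The only minor care needed is to be sure that the $\alpha$- or $\beta$-space in question is, indeed, a subspace of $\Hom(U,W)$ rather than the projectivized Segre — this is precisely why the preceding lemma and corollary are phrased in the affine rather than projective setting.
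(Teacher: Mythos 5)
Your argument is correct and is essentially identical to the paper's proof, which likewise observes that $\alpha$-spaces in $\Hom(\KK^{n+1}/\aff{L},\aff{L})$ have dimension $\ell+1$ and $\beta$-spaces have dimension $n-\ell$, and compares these with $\dim N^\ast_{\Sigma,L}=\codim\Sigma$. Your version just spells out the canonical identifications $E_\alpha(u)\cong\Hom(U/u,W)$ and $E_\beta(w)\cong\Hom(U,w)$ a bit more explicitly.
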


\begin{proof}
This follows from the fact that $\alpha$-spaces in $\Hom(\KK^{n+1}/\aff{L}, \aff{L})$ have dimension $\ell+1$ and that the dimension of $\beta$-spaces is $n-\ell$, where $L \in \Gr(\ell, \PP^n)$. 
\end{proof}

\begin{lem}
\label{lem:alphaBetaDualityCoiso}
A subvariety $\Sigma \subset \Gr(\ell,\PP^n)$ is strongly coisotropic of $\beta$-type if and only if $\Sigma^\perp \subset \Gr(n-\ell-1, (\PP^n)^\ast)$ is strongly coisotropic of $\alpha$-type.
\end{lem}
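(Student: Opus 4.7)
The plan is to package together two facts already in place: the conormal-space identification from Remark/Definition~\ref{rem:dualityConormalSpaces}, and the observation (in the remark after Definition~\ref{def:alphaBetaSpace}) that dualization of homomorphisms swaps $\alpha$- and $\beta$-spaces. Since the whole statement is an iff and since $(\Sigma^\perp)^\perp = \Sigma$, it suffices to prove one direction, say that $\beta$-type implies $\alpha$-type on the dual side.

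Concretely, let $\aff{L} \in \Reg(\Sigma)$ and write $L^\perp$ for its image under the Grassmannian isomorphism, identified with $(\KK^{n+1}/\aff{L})^\ast$. By Remark/Definition~\ref{rem:dualityConormalSpaces}, the map
\begin{equation*}
N^\ast_{\Sigma, \aff{L}} \;\longrightarrow\; N^\ast_{\Sigma^\perp, L^\perp}, \qquad \psi \longmapsto \psi^\ast,
\end{equation*}
is an isomorphism, and of course $\rank \psi = \rank \psi^\ast$. Suppose now that $\Sigma$ is strongly coisotropic of $\beta$-type, so that for a general smooth $\aff{L}$ we have $N^\ast_{\Sigma,\aff{L}} \subset E_\beta(w)$ for a uniquely determined line $w \subset \aff{L}$. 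I would then invoke the fact, recorded in the remark after Definition~\ref{def:alphaBetaSpace}, that the dualization map $\Hom(\KK^{n+1}/\aff{L}, \aff{L}) \to \Hom(\aff{L}^\ast, (\KK^{n+1}/\aff{L})^\ast)$ sends $E_\beta(w)$ to $E_\alpha((\aff{L}/w)^\ast)$. Combining the two maps gives
\begin{equation*}
N^\ast_{\Sigma^\perp, L^\perp} \;\subset\; E_\alpha\!\bigl((\aff{L}/w)^\ast\bigr),
\end{equation*}
and since $(\aff{L}/w)^\ast$ is a hyperplane in $\aff{L}^\ast$, this shows $\Sigma^\perp$ is strongly coisotropic of $\alpha$-type at $L^\perp$. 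Running this for general $\aff{L}$ yields the conclusion.

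For the reverse direction, I would just apply the same argument to $\Sigma^\perp$ in place of $\Sigma$, noting that the Grassmannian isomorphism is an involution (up to the canonical identification $V^{\ast\ast} \cong V$) so that $(\Sigma^\perp)^\perp = \Sigma$; the duality of $\alpha$- and $\beta$-spaces is symmetric in exactly the same way. No real obstacle is expected here: the whole statement is essentially bookkeeping with the duality between $\Hom(U,W)$ and $\Hom(W^\ast,U^\ast)$. The only point deserving any care is checking that the "uniqueness" of the enclosing $\alpha$- or $\beta$-space is compatible under dualization, but this is immediate from Lemma~\ref{lem:alphaBetaSegre} together with the fact that dualization is a bijection on the space of rank-one homomorphisms.
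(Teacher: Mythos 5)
Your proposal is correct and follows essentially the same route as the paper: both reduce the statement to the conormal-space identification $\psi \mapsto \psi^\ast$ of Remark/Definition~\ref{rem:dualityConormalSpaces} together with the fact that dualization exchanges the condition $\im \varphi \subset w$ with the condition $(\aff{L}/w)^\ast \subset \ker \varphi^\ast$, i.e.\ swaps $\beta$-spaces with $\alpha$-spaces. The paper states this in two lines (also noting explicitly that $L$ is smooth on $\Sigma$ iff $L^\vee$ is smooth on $\Sigma^\perp$, which you use implicitly); your version merely spells out the same bookkeeping.
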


\begin{proof}
First, we notice that $L$ is a smooth point of $\Sigma$ if and only if $L^\vee$ is a smooth point of $\Sigma^\perp$.
Secondly, the image of $\varphi \in N^\ast_{\Sigma,L}$ is contained in a linear subspace $U \subset \aff{L}$ if and only the kernel of $\varphi^\ast \in N^\ast_{\Sigma^\perp, L^\vee}$ contains $(\aff{L}/U)^\ast$.
\end{proof}

\begin{cor}
Let $X \subset \PP^n$ be an irreducible variety.
Table~\ref{tab:typesCH_i(X)} summarizes the types of the strongly coisotropic varieties associated to $X$ (see~\eqref{eq:conormalSpaceG_l(X)} ).
$\hfill\diamondsuit$
\addtolength{\tabcolsep}{-1pt} 
\begin{table}
\centering
\begin{tabular}{cc}
\Xhline{2\arrayrulewidth}
$\ell$ & type of $\mathcal{G}_\ell(X)$ \\
\hline
$0$ \\
\vdots & $\beta$-type \\
$\codim X -2$ \\
\hline
$\codim X -1$ \\
 \vdots & hypersurface  \\
 $\dim X^\vee$ \\
\hline
 $\dim X^\vee+1$ \\
 \vdots & $\alpha$-type \\
 $n-1$ \\
\Xhline{2\arrayrulewidth}
\end{tabular}
\caption{Types of strongly coisotropic varieties of an irreducible projective variety $X$.}
\label{tab:typesCH_i(X)}
\end{table}
\addtolength{\tabcolsep}{1pt} 
\end{cor}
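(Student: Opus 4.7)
The plan is to split the range $0 \leq \ell \leq n-1$ into the three regions appearing in Table~\ref{tab:typesCH_i(X)} and handle each via a tool already available in the paper: direct inspection of the conormal description~\eqref{eq:conormalSpaceG_l(X)} for the small range, Theorem~\ref{thm:coisotropicHSdimension} for the middle range, and the duality Lemmas~\ref{lem:extendedDuality} and~\ref{lem:alphaBetaDualityCoiso} for the large range. All three are essentially assembly steps; no new geometry is needed.

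For $0 \leq \ell \leq \codim X - 2$, Proposition~\ref{prop:associatedVarsAreStronglyCoisotropic} applies and identity~\eqref{eq:conormalSpaceG_l(X)} yields, at a general point $L \in \mathcal{G}_\ell(X)$,
\[
N^{\ast}_{\mathcal{G}_\ell(X),L} \subset \bigl\{\varphi \in \Hom(\KK^{n+1}/\aff{L},\aff{L}) \mid \im\varphi \subset \aff{x_L}\bigr\} = E_\beta(\aff{x_L}).
\]
Since $\ell \leq \codim X - 2$ forces $\codim_{\Gr(\ell,\PP^n)} \mathcal{G}_\ell(X) \geq 2$, the projectivized conormal space is at least a projective line, so by Lemma~\ref{lem:alphaBetaSegre} the $\beta$-space $E_\beta(\aff{x_L})$ is the unique $\alpha$- or $\beta$-space containing it. Hence $\mathcal{G}_\ell(X)$ is of $\beta$-type.

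For $\codim X - 1 \leq \ell \leq \dim X^\vee$, Theorem~\ref{thm:coisotropicHSdimension} states that $\mathcal{G}_\ell(X)$ is a hypersurface in $\Gr(\ell,\PP^n)$, which is the middle row of the table.

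For $\dim X^\vee + 1 \leq \ell \leq n-1$, set $m := n - \ell - 1$, so that $0 \leq m \leq n - \dim X^\vee - 2 = \codim_{(\PP^n)^{\ast}} X^\vee - 2$. Applying the already-proved case of the corollary to $X^\vee \subset (\PP^n)^\ast$ at index $m$ shows that $\mathcal{G}_{m}(X^\vee)$ is strongly coisotropic of $\beta$-type. By Lemma~\ref{lem:extendedDuality}, $\mathcal{G}_\ell(X)^\perp = \mathcal{G}_{m}(X^\vee)$, and Lemma~\ref{lem:alphaBetaDualityCoiso} together with $(\Sigma^\perp)^\perp = \Sigma$ then yields that $\mathcal{G}_\ell(X)$ is strongly coisotropic of $\alpha$-type. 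The only subtlety is to check that the three ranges together cover $\{0,1,\ldots,n-1\}$ and are mutually exclusive, which uses the standard inequality $\codim X - 1 \leq \dim X^\vee$; this is not an obstacle, only bookkeeping, so no real difficulty arises.
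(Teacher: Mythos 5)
Your proposal is correct and follows essentially the same route the paper intends: the corollary is stated without a separate proof precisely because it is the combination of the conormal description~\eqref{eq:conormalSpaceG_l(X)} for $\ell \leq \codim X - 2$, Theorem~\ref{thm:coisotropicHSdimension} for the middle range, and the duality Lemmas~\ref{lem:extendedDuality} and~\ref{lem:alphaBetaDualityCoiso} for $\ell \geq \dim X^\vee + 1$, exactly mirroring the case split in the proof of Corollary~\ref{cor:stronglyCoisotropicVarietiesAssociatedToX}. Your extra observation that $\codim \mathcal{G}_\ell(X) \geq 2$ in the first range (so that Lemma~\ref{lem:alphaBetaSegre} gives a \emph{unique} containing $\beta$-space) is the right justification for why the type is well-defined there.
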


\begin{example}
\label{ex:segreContinued}
For the Segre variety $X := \Seg(\KK^2, \KK^4) \subset \PP^7$ in Example~\ref{ex:segreStart}, 
the two strongly coisotropic varieties $\mathcal{G}_0(X) = X$ and $\mathcal{G}_1(X)$ are of $\beta$-type,
the three varieties $\mathcal{G}_2(X), \ldots, \mathcal{G}_4(X)$ are its coisotropic hypersurfaces,
and the two strongly coisotropic varieties $\mathcal{G}_5(X)$ and $\mathcal{G}_6(X) = X^\vee$ are of $\alpha$-type.
$\hfill\diamondsuit$
\end{example}

Now we will finally prove Theorem~\ref{thm:stronglyCoisotropic} (and thus Theorem~\ref{thm:stronglyCoisoFULL}) by restricting ourselves to strongly coisotropic varieties of $\beta$-type.

\begin{thm}
\label{thm:stronglyCoisotropicBeta}
For an irreducible strongly coisotropic variety $\Sigma \subset \Gr(\ell, \PP^n)$ of $\beta$-type, there is an irreducible variety $X \subset \PP^n$ such that $\Sigma = \mathcal{G}_{\ell}(X)$.
\end{thm}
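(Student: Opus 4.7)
The plan is to associate an irreducible projective variety $X \subset \PP^n$ to $\Sigma$ directly from its $\beta$-type conormal data, and then verify that $\Sigma = \mathcal{G}_\ell(X)$ by a combination of tangent-space calculations (for $\Sigma \subset \mathcal{G}_\ell(X)$) and a dimension comparison. Since $c := \codim \Sigma \geq 2$, Lemma~\ref{lem:alphaBetaSegre} gives for each $L \in \Reg(\Sigma)$ a unique line $w_L \subset \aff{L}$ with $N^*_{\Sigma, L} \subset E_\beta(w_L)$; setting $p_L := [w_L] \in L$ defines a rational map $\pi : \Sigma \dashrightarrow \PP^n$, whose image-closure I take as $X$. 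Writing $N^*_{\Sigma, L} = w_L \otimes A_L$ with $A_L \subset (\KK^{n+1}/\aff{L})^*$ of dimension $c$, and letting $\tilde M_L \subset \KK^{n+1}$ denote the preimage of $A_L^\perp$ under $\KK^{n+1} \twoheadrightarrow \KK^{n+1}/\aff{L}$, one has $\aff{L} \subset \tilde M_L$, $\dim \tilde M_L = n-c+1$, and the dual description $T_{\Sigma, L} = \{\varphi \mid \varphi(w_L) \subset \tilde M_L/\aff{L}\}$.

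First I would show $\Sigma \subset \mathcal{G}_\ell(X)$. Applying Corollary~\ref{cor:keyLemmaTangentCorrespondence} to $\pi$ (with $k = 0 \leq \ell$) at a general $L$, every $\varphi \in T_{\Sigma, L}$ satisfies $\varphi|_{w_L} = (D_L\pi(\varphi) \mod \aff{L})$. Since $D_L\pi$ lands in $T_{X, p_L}$, whose elements factor through $\aff{\TT_{X, p_L}}/w_L$, one obtains $\varphi(w_L) \subset (\aff{\TT_{X, p_L}} + \aff{L})/\aff{L}$. As $\varphi$ ranges over $T_{\Sigma, L}$, the left side fills all of $\tilde M_L/\aff{L}$ and the right side fills all of $(\aff{\TT_{X, p_L}} + \aff{L})/\aff{L}$ (using that $\pi$ is dominant, so $D_L\pi$ surjects onto $T_{X, p_L}$), forcing the equality $\tilde M_L = \aff{\TT_{X, p_L} + L}$. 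Because $c \geq 2$ implies $\dim \tilde M_L \leq n - 1$, it follows that $L + \TT_{X, p_L} \subsetneq \PP^n$, i.e.\ $L$ meets $X$ non-transversely at $p_L$, so $\Sigma \subset \mathcal{G}_\ell(X)$.

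For the equality $\Sigma = \mathcal{G}_\ell(X)$ I would compare dimensions. The same formula $\varphi|_{w_L} = D_L\pi(\varphi) \bmod \aff{L}$ together with the surjectivity of the restriction $\varphi \mapsto \varphi|_{w_L}$ from $T_{\Sigma, L}$ onto $\Hom(w_L, A_L^\perp)$ (a space of dimension $n-\ell-c$) shows that $\im D_L\pi$, hence $T_{X, p_L}$, has dimension at least $n-\ell-c$; this gives $\dim X \geq n-\ell-c$. To pin down the reverse inequality $\dim X \leq n-\ell-c$, it suffices to show that for general $p \in X$ the full $\ell(n-\ell)$-dimensional subvariety $U_p := \{L' \in \Gr(\ell, \PP^n) \mid p \in L'\}$ lies inside $\Sigma$, since then $\dim \Sigma \geq \dim X + \ell(n-\ell)$. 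Once $\dim X = n-\ell-c$ is established, the inequality $\codim X = \ell + c > \ell$ places us in the range where $\mathcal{G}_\ell(X) = \{L \mid L \cap X \neq \emptyset\}$ and $\dim \mathcal{G}_\ell(X) = \dim X + \ell(n-\ell) = \dim \Sigma$; irreducibility of both sides then forces $\Sigma = \mathcal{G}_\ell(X)$.

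The main obstacle is precisely the claim $U_p \subset \Sigma$. The suggestive starting point is that the $\beta$-type condition yields $T_{U_{p_L}, L} = E_\beta(w_L)^\perp \subset T_{\Sigma, L}$ at every smooth $L$: the tangent spaces along the leaves of the vertical foliation of $F_{0, \ell} \to \PP^n$ are contained in those of $\Sigma$ along the graph of $\pi$. The hard part is promoting this first-order tangency into an actual containment of subvarieties, since generically a first-order tangency is strictly weaker. My plan is to argue by an algebraic Frobenius-style integrability: resolve the indeterminacy of $\pi$ to a morphism $\tilde\pi : \tilde\Sigma \to X$, examine a general smooth fiber, and exploit the rigid $\beta$-type description $T_{\Sigma, L} = \{\varphi \mid \varphi(w_L) \subset \tilde M_L/\aff{L}\}$ together with the formula $\tilde M_L = \aff{\TT_{X, p_L} + L}$ derived in step two to show that this fiber has Zariski tangent space exactly $T_{U_p, L}$, hence is smooth of dimension $\ell(n-\ell)$, and therefore coincides with $U_p$ by irreducibility. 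Making this step fully rigorous—in particular, showing that $\TT_{X, p_L} \cap L = \{p_L\}$ at general $L$, which is what forces the fiber's tangent space to fill $T_{U_p, L}$—is the delicate heart of the argument.
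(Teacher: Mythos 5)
Your construction of $X$ is exactly the paper's: the $\beta$-type condition singles out $p_L$ (the paper's $x_L$), the common kernel gives your $\tilde M_L$ (the paper's $\aff{P_L}$), and the tangent-space description $T_{\Sigma,L}=\{\varphi\mid\varphi(w_L)\subset \tilde M_L/\aff{L}\}$ together with Corollary~\ref{cor:keyLemmaTangentCorrespondence} yields $\TT_{X,p_L}+L\subset P_L\subsetneq\PP^n$, hence $\Sigma\subset\mathcal{G}_\ell(X)$, and the lower bound $\dim X\geq n-\ell-\codim\Sigma$. All of this is correct (your observation that in fact $\tilde M_L=\aff{\TT_{X,p_L}+L}$ is a slightly sharper version of the paper's inclusion). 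But the proof is not complete: the decisive step is the upper bound $\dim X\leq n-\ell-\codim\Sigma$, equivalently $\TT_{X,p_L}\cap L=\{p_L\}$ for general $L$, and you explicitly leave this open. This is not a routine verification one can wave at with a Frobenius-type integrability argument; note also that your route through ``$U_p\subset\Sigma$'' is in danger of circularity, since identifying a general fiber of $\Sigma\dashrightarrow X$ with $U_p$ by a dimension count presupposes $\dim\Sigma-\dim X=\ell(n-\ell)$, i.e.\ the very equality $\dim X=n-\ell-\codim\Sigma$ you are trying to establish.

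The paper closes this gap by a different, purely dimension-theoretic argument that makes essential use of $\codim\Sigma\geq 2$. Writing $\delta:=\dim X-(n-\codim\Sigma-\ell)$, one has $\dim(\TT_{X,p_L}\cap L)\geq\delta$ for general $L$, so $\Sigma$ is contained in the incidence variety
\begin{align*}
\Sigma'' := \overline{\left\lbrace L \mid \exists x \in \Reg(X): x \in L,\ \dim (\TT_{X,x} \cap L) \geq \delta \right\rbrace},
\end{align*}
whose dimension is bounded above by parametrizing $(x, L', L)$ with $x\in L'\subset\TT_{X,x}$, $\dim L'=\delta$, $L'\subset L$. Comparing this bound with $\dim\Sigma$ gives $0\leq -\delta(\codim\Sigma-1)$, forcing $\delta=0$ (the extreme case $\dim X=n-\codim\Sigma$ is handled by a separate, similar count). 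Without an argument of this kind — or a genuine proof that $\TT_{X,p_L}\cap L=\{p_L\}$ — the claimed equality $\Sigma=\mathcal{G}_\ell(X)$ does not follow, so the proposal as it stands has a real gap at its acknowledged ``delicate heart.''
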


\begin{proof}
Since $\Sigma$ is strongly coisotropic of $\beta$-type, there is, for every $L \in \Reg(\Sigma)$, a unique $x_L \in L$ such that
the conormal space $N^\ast_{\Sigma, L}$ is contained in the $\beta$-space $E_\beta(\aff{x_L})$.
In other words, for every $L \in \Reg(\Sigma)$ and every $\varphi \in N^\ast_{\Sigma, L}$, the image of $\varphi$ is contained in the one-dimensional subspace $\aff{x_L} \subset \KK^{n+1}$ corresponding to the projective point $x_L$.
Hence, we get a rational map $\rho: \Sigma \dashrightarrow \PP^n$ which maps $L \in \Reg(\Sigma)$ to $x_L \in \PP^n$.
We denote the Zariski closure of the image of this map by $X \subset \PP^n$. 
Note that $X$ is irreducible since $\Sigma$ is irreducible.

Moreover, for every $L \in \Reg(\Sigma)$, we consider the intersection of the kernels of all $\varphi \in N^\ast_{\Sigma, L}$ to find $P_L \in \Gr(n- \codim \Sigma, \PP^n)$ with $L \subset P_L$ and $\aff{P_L}/\aff{L} \subset \ker \varphi$ for every $\varphi \in N^\ast_{\Sigma, L}$.
Thus, for $L \in \Reg(\Sigma)$, we see that
\begin{align}
\label{eq:conormalSpaceBeta}
N^\ast_{\Sigma,L} = \left\lbrace \varphi \in \Hom(\KK^{n+1}/\aff{L},  \aff{L}) \mid \im \varphi \subset \aff{x_L}, \, \aff{P_L}/\aff{L} \subset \ker \varphi \right\rbrace,
\end{align}
since both spaces in~\eqref{eq:conormalSpaceBeta} have the same dimension (namely $\codim \Sigma$).
This shows
\begin{small}
\begin{align*}
T_{\Sigma,L} 
&= \lbrace \varphi \in \Hom(\aff{L}, \KK^{n+1}/\aff{L}) \,|\, \aff{x_L} \subset \ker \varphi \rbrace + \lbrace \varphi \in \Hom(\aff{L}, \KK^{n+1}/\aff{L}) \,|\, \im \varphi \subset \aff{P_L}/\aff{L} \rbrace
\\ &= \lbrace \varphi \in \Hom(\aff{L}, \KK^{n+1}/\aff{L}) \,|\, \varphi(\aff{x_L}) \subset \aff{P_L}/\aff{L} \rbrace.
\end{align*}
\end{small}
The differential of $\rho: \Sigma \dashrightarrow X$ at a general $L \in \Sigma$ is a surjection $T_{\Sigma, L} \twoheadrightarrow T_{X, x_L}$.
By Corollary~\ref{cor:keyLemmaTangentCorrespondence}, the image of each $\psi:\aff{x_L} \to \KK^{n+1}/\aff{x_L}$ in $T_{X, x_L}$ is contained in $\aff{P_L}/\aff{x_L}$.
Thus, $\TT_{X, x_L} \subset P_L$ for a general $L \in \Sigma$.
In particular, the dimension of $X$ is at most $\dim P_L = n- \codim \Sigma$. 
Since we have constructed $X$ such that $\Sigma \subset \mathcal{G}_\ell(X)$, we derive
\begin{align}
\label{eq:estimatestronglyCoiso}
\hspace*{-3mm}
\begin{split}
\dim \Sigma &\leq \dim \mathcal{G}_\ell(X) \leq \ell(n-\ell) + \dim X \\
&\leq \ell(n-\ell) + n- \codim \Sigma
= \ell(n-\ell) + n - (\ell+1)(n-\ell)+\dim \Sigma\\
&= \dim \Sigma + \ell.
\end{split}
\end{align}
This shows that $\dim X \geq \dim \Sigma - \ell(n-\ell) = n-\codim \Sigma-\ell$.
If $\dim X = n-\codim \Sigma-\ell$, then all inequalities in the first two of~\eqref{eq:estimatestronglyCoiso} are equalities and $\Sigma = \mathcal{G}_\ell(X)$. 
Hence, it is only left to show that the dimension of $X$ cannot be larger than $n-\codim \Sigma-\ell$.

Let us first assume that $\dim X = n-\codim \Sigma > n-\codim \Sigma-\ell$.
In this case, we have $\ell > 0$, $P_L = \TT_{X, x_L}$ for a general $L \in \Sigma$, and 
\begin{align*}
\Sigma \subset \Sigma' := \overline{\left\lbrace L \mid \exists x \in \Reg(X): x \in L \subset \TT_{X, x_L} \right\rbrace} \subset \Gr(\ell, \PP^n).
\end{align*}
This yields 
\begin{align*}
\dim \Sigma
&\leq \dim \Sigma'
\leq \dim X + \dim \Gr(\ell-1, \PP^{\dim X-1}) \\
&= (n- \codim \Sigma) + \ell(n-\codim \Sigma - \ell)\\
&= \dim \Sigma - \ell(n-\ell-1) + \ell \left( \dim \Sigma - \ell(n-\ell) \right), 
\end{align*}
which is, due to $\ell > 0$, equivalent to $\dim \Sigma \geq (\ell+1)(n-\ell)-1$.
The latter inequality is a contradiction to $\codim \Sigma \geq 2$.

Finally, we assume that $n -\codim \Sigma-\ell < \dim X < n - \codim \Sigma$. 
We define $\delta := \dim X - (n -\codim \Sigma-\ell) = \dim X - \dim \Sigma + \ell(n-\ell)> 0$.
For a general $L \in \Sigma$, we have $\TT_{X,x_L}+L \subset P_L$ and thus $\dim (\TT_{X,x_L} \cap L) \geq \delta$.
This shows that
\begin{align*}
\Sigma \subset \Sigma'' := \overline{\left\lbrace L \mid \exists x \in \Reg(X): x \in L, \dim (\TT_{X,x} \cap L) \geq \delta \right\rbrace} \subset \Gr(\ell, \PP^n).
\end{align*}
We can parametrize a general point in $\Sigma''$ by a point $x \in \Reg(X)$, a $\delta$-di\-men\-sion\-al subspace $L'$ of $\TT_{X,x}$ passing through $x$, and an $\ell$-dimensional subspace of $\PP^n$ containing $L'$. This leads us to
\begin{align*}
\dim \Sigma &\leq
\dim \Sigma'' \\
&\leq \dim X + \dim \Gr(\delta-1, \PP^{\dim X-1}) + \dim \Gr (\ell-\delta-1, \PP^{n-\delta-1}) \\
&= \dim X + \delta(\dim X-\delta)+(\ell-\delta)(n-\ell) =: D.
\end{align*}
Note that $\ell-\delta-1 \geq 0$ due to $\dim X < n - \codim \Sigma$.
By definition of $D$ and $\delta$, 
\begin{align*}
0 \leq D - \dim \Sigma
= \delta \left( 1 + (\dim X - \delta)  - (n-\ell) \right) 
= - \delta \left( \codim \Sigma -1  \right).
\end{align*}
This is a contradiction since $\codim \Sigma \geq 2$ and $\delta > 0$.
\end{proof}

\begin{proof}[Proof of Theorems~\ref{thm:stronglyCoisoFULL} and~\ref{thm:stronglyCoisotropic}.]
Theorem~\ref{thm:stronglyCoisoFULL} is an amalgamation of Corollary~\ref{cor:stronglyCoisotropicVarietiesAssociatedToX} and Theorem~\ref{thm:stronglyCoisotropic}. Hence, it is sufficient to prove the latter:

If $\Sigma$ is a hypersurface, the assertion is proved in~\cite[Ch.~4, Thm.~3.14]{gkz} and~\cite{coisotropicHS}.
If $\Sigma$ is strongly coisotropic of $\beta$-type, the assertion follows from Theorem~\ref{thm:stronglyCoisotropicBeta}.
Finally, if $\Sigma$ is strongly coisotropic of $\alpha$-type, then $\Sigma^\perp$ is strongly coisotropic of $\beta$-type by Lemma~\ref{lem:alphaBetaDualityCoiso}.
Thus, Theorem~\ref{thm:stronglyCoisotropicBeta} implies that $\Sigma^\perp = \mathcal{G}_{n-\ell-1}(X)$ for some irreducible variety $X \subset (\PP^n)^\ast$.
By Lemma~\ref{lem:extendedDuality}, we get $\Sigma = \mathcal{G}_{\ell}(X^\vee)$.
\end{proof}

\subsection{Lines with Higher Contact to Hypersurfaces}
\label{ssec:higherContact}
For a hypersurface $X \subset \PP^n$ and $1 \leq m \leq \deg X$, we define
\begin{align*}
\mathcal{L}_m(X) &:= \overline{\left\lbrace L \mid \exists p \in \Reg(X): L \text{ intersects } X \text{ at } p \text{ with multiplicity } m \right\rbrace} \\
&\; \subset \Gr(1, \PP^n).
\end{align*}
Throughout this subsection, we consider a general hypersurface $X \subset \PP^n$ of degree at least three and assume $m \leq n$.
The subvariety $\mathcal{L}_m(X)$ of the Grassmannian of lines in $\PP^n$ has codimension $m-1$ (cf. Lemma~\ref{lem:contactOrderCone}).
We will show that $\mathcal{L}_m(X)$ is coisotropic, but that its conormal spaces are \emph{not} spanned by rank one homomorphisms if $m \geq 3$:

\begin{thm}
\label{thm:higherContactLines}
%For $L \in \Reg(\mathcal{L}_m(X)) \setminus \mathcal{L}_{m+1}(X)$ which intersects $X$ at exactly one point $p \in \Reg(X)$ with multiplicity $m$, we have that
If $m \geq 2$, a general $L \in \mathcal{L}_m(X)$ intersects $X$ at exactly one point $p_L \in \Reg(X)$ with multiplicity $m$ and
$$\PP(N^\ast_{\mathcal{L}_m(X),L}) \cap \Seg(\KK^{n+1}/\aff{L}, \aff{L}) = \lbrace \varphi \rbrace,$$
where $\im \varphi = \aff{p_L}$ and $\ker \varphi = \aff{\TT_{X,p_L}} / \aff{L}$.
Moreover, the $(m-2)$-dimensional projectivized conormal space $\PP(N^\ast_{\mathcal{L}_m(X),L})$ intersects $\Seg(\KK^{n+1}/\aff{L}, \aff{L})$ at $\varphi$ with multiplicity $m-1$.
In particular, $$\PP(N^\ast_{\mathcal{L}_m(X),L}) \in \Sec_{m-2} (\Seg(\KK^{n+1}/\aff{L}, \aff{L}))$$
and $\mathcal{L}_m(X)$ is coisotropic. 
\end{thm}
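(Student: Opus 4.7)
The plan is to carry out an explicit local computation of the conormal space of $\mathcal{L}_m(X)$ at a general $L$, identify the distinguished rank-one point $\varphi$, and analyze the position of this $(m-2)$-plane with respect to $\Seg(\KK^{n+1}/\aff{L},\aff{L})$. The first assertion is a dimension count via the incidence
\[\mathcal{I}_m:=\overline{\{(p,L)\in\Reg(X)\times\Gr(1,\PP^n):\mathrm{mult}_p(L\cap X)\geq m\}}.\]
The fiber of $\mathcal{I}_m\to X$ over a general $p$ is the $(n-m)$-dimensional variety of lines through $p$ with contact of order $\geq m$, so $\dim\mathcal{I}_m=2n-m-1=\dim\mathcal{L}_m(X)$. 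For general $X$, lines with two distinct contact-$m$ points form a proper closed subvariety, hence $\mathcal{I}_m\to\mathcal{L}_m(X)$ is generically one-to-one, producing $\varpi:\mathcal{L}_m(X)\dashrightarrow X$, $L\mapsto p_L$, with generic contact multiplicity exactly $m$.

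Next, fix coordinates with $p_L=[1{:}0{:}\cdots{:}0]$, $L=\spann(e_0,e_1)$, and let $g(y_1,\ldots,y_n)$ be the dehomogenization of the equation of $X$ in the chart $x_0=1$; the contact condition at $L$ reads $\partial_1^k g(0)=0$ for $k=0,\ldots,m-1$, with $\partial_1^m g(0)\neq 0$ generically. Parametrize nearby lines by $(a,c)=(a_2,\ldots,a_n,c_2,\ldots,c_n)$ via $L'=\spann((1,0,a),(0,1,c))$, and impose contact $\geq m$ at the point $(1,t_0,a+t_0 c)\in L'$ by requiring the first $m$ Taylor coefficients in $t$ of $g(t,a+tc)$ at $t=t_0$ to vanish. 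Linearization at the origin yields $m$ linear equations, and the $(m-1)$-st eliminates $t_0$ thanks to $\partial_1^m g(0)\neq 0$, so $T_{\mathcal{L}_m(X),L}$ is cut out by the $m-1$ forms
\[\ell_k(a,c)=\sum_{j=2}^n\frac{\partial_1^k\partial_j g(0)}{k!}\,a_j+\sum_{j=2}^n\frac{\partial_1^{k-1}\partial_j g(0)}{(k-1)!}\,c_j,\qquad k=0,\ldots,m-2\]
(with the $c$-sum absent when $k=0$). Via the trace pairing these translate into conormal vectors $\psi^{(k)}\colon\KK^{n+1}/\aff{L}\to\aff{L}$ given by $\psi^{(0)}(e_j)=\partial_j g(0)\,e_0$ and $\psi^{(k)}(e_j)=\tfrac{\partial_1^k\partial_j g(0)}{k!}e_0+\tfrac{\partial_1^{k-1}\partial_j g(0)}{(k-1)!}e_1$ for $k\geq 1$; in particular $\psi^{(0)}$ has image $\aff{p_L}$ and kernel $\aff{\TT_{X,p_L}}/\aff{L}$, which identifies it as the $\varphi$ of the theorem.

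For general $X$ with $m\leq n$, the vectors $v^{(k)}:=(\partial_1^k\partial_j g(0))_{j=2}^n\in\KK^{n-1}$ are linearly independent for $k=0,\ldots,m-2$; after a linear change of basis in $\KK^{n+1}/\aff{L}$ we may assume $v^{(k)}=e_{k+2}$. In these coordinates $\sum_k\lambda_k\psi^{(k)}$ is the $2\times(n-1)$ matrix supported on columns $2,\ldots,m$, equal to $(\mu_{j-2},(j-1)\mu_{j-1})$ in column $j$ for $2\leq j\leq m-1$ and to $(\mu_{m-2},0)$ in column $m$, where $\mu_k:=\lambda_k/k!$. Normalizing $\mu_0=1$ as an affine chart near $\psi^{(0)}$, the $2\times 2$ minors involving column $2$ give the recursion $(j-1)\mu_{j-1}=\mu_1\mu_{j-2}$, so $\mu_k=\mu_1^k/k!$, while the minor of columns $m-1,m$ gives $\mu_1\mu_{m-2}=0$; the remaining minors are then automatically implied. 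Hence the local defining ideal of $\PP(N^\ast_{\mathcal{L}_m(X),L})\cap\Seg(\KK^{n+1}/\aff{L},\aff{L})$ at $\psi^{(0)}$ reduces to $(\mu_1^{m-1})\subset\KK[\mu_1]$, proving that the intersection is set-theoretically $\{\varphi\}$ and scheme-theoretically of length $m-1$.

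For the final assertion, consider the rational curve $\gamma\colon\KK\to\Seg(\KK^{n+1}/\aff{L},\aff{L})$ whose first row is $(1,t,t^2/2!,\ldots,t^{m-2}/(m-2)!,0,\ldots,0)$ and whose second row is $t$ times the first. Computing derivatives entry-wise shows $\gamma^{(s)}(0)=s!\,\psi^{(s)}$ for $s=0,\ldots,m-2$, so $\PP(N^\ast_{\mathcal{L}_m(X),L})$ is precisely the $(m-2)$-nd osculating plane to $\gamma$ at $\gamma(0)=\psi^{(0)}$. Being the limit, as $\epsilon\to 0$, of the projective spans of the $m-1$ distinct Segre points $\gamma(0),\gamma(\epsilon),\ldots,\gamma((m-2)\epsilon)$, it lies in $\Sec_{m-2}(\Seg(\KK^{n+1}/\aff{L},\aff{L}))$, and $\mathcal{L}_m(X)$ is coisotropic. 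The main obstacle is the delicate first-order expansion of the simultaneous contact conditions together with the extraction of a basis $\psi^{(0)},\ldots,\psi^{(m-2)}$ whose matrix of $2\times 2$ minors reduces to the curvilinear ideal $(\mu_1^{m-1})$; once this is established, uniqueness of $\varphi$, multiplicity $m-1$, and the osculating-curve degeneration witnessing membership in $\Sec_{m-2}$ all follow from the same computation.
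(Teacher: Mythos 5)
Your proof is correct and reaches all the assertions, and at its core it is the same strategy as the paper's: pick coordinates adapted to $p_L$, $L$ and the contact flag, compute $T_{\mathcal{L}_m(X),L}$ as the kernel of $m-1$ explicit linear forms, and read off a triangular basis of $N^\ast_{\mathcal{L}_m(X),L}$ (your $\psi^{(0)},\ldots,\psi^{(m-2)}$ have exactly the staircase shape of the matrices in~\eqref{eq:normalBasis}). The differences are in how three sub-steps are executed, and in each case your version is a legitimate alternative. First, you linearize the simultaneous contact conditions directly (eliminating the contact-point parameter $t_0$ via $\partial_1^m g(0)\neq 0$), whereas the paper routes the computation through the cones $\mathcal{C}_{k,p}(X)$, the flag~\eqref{eq:higherContactFlag}, and the fiber computation of Lemma~\ref{lem:higherContactPhi}; your route is shorter but leans on the same genericity input (your linear independence of the $v^{(k)}$ is precisely the paper's condition~\eqref{eq:gradients}, justified there by transversality of the cones — you assert it without proof, which matches the paper's level of rigor but is the one place a referee might ask for a word). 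Second, for the multiplicity you compute the local ideal of the scheme-theoretic intersection and reduce it to the curvilinear ideal $(\mu_1^{m-1})$, while the paper projects $N^\ast$ into $\Hom(\KK^{n+1}/\aff{\TT_L\mathcal{C}_{m,p_L}(X)},\aff{L})$, where the intersection becomes dimensionally proper, and invokes B\'ezout via $\deg\Seg=m-1$; your computation is more explicit and needs no properness reduction. (Minor bookkeeping slip: the relation $\mu_1\mu_{m-2}=0$ comes from the minor of columns $2$ and $m$, not columns $m-1$ and $m$ — the latter gives $\mu_{m-2}^2=0$, which is weaker for $m\geq 4$; the needed relation is in the ideal either way, so the conclusion $(\mu_1^{m-1})$ stands.) Third, for membership in $\Sec_{m-2}$ the paper deduces it from the multiplicity statement without an explicit witness, whereas you exhibit a rational curve $\gamma$ on the Segre whose $(m-2)$-nd osculating plane is exactly $\PP(N^\ast_{\mathcal{L}_m(X),L})$ and degenerate secant spans to it; this is a genuinely more constructive argument for the final claim and is, if anything, a small improvement on the paper.
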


Note that Theorem~\ref{thm:higherContactLines} is simply a more specific reformulation of Theorem~\ref{thm:higherContactLinesFULL}.

\begin{example}
\label{ex:conormalSpaceHurwitz}
$\mathcal{L}_2(X) = \mathcal{G}_1(X)$ is the Hurwitz hypersurface of $X$.
For a general $L \in \mathcal{L}_2(X)$, we have that $\PP(N^\ast_{\mathcal{L}_2(X),L})$ consists of one projectivized homomorphism with image $\aff{p_L}$ and kernel $\aff{\TT_{X,p_L}} / \aff{L}$.

The projectivized conormal space at a general $L \in \mathcal{L}_3(X)$ is a tangent line to the Segre variety $\Seg(\KK^{n+1} / \aff{L}, \, \aff{L})$ at this homomorphism.
$\hfill\diamondsuit$
\end{example}

To prove Theorem~\ref{thm:higherContactLines}, we further define
\begin{align*}
\mathcal{L}_{m,p}(X) := \overline{\left\lbrace L \mid L \text{ intersects } X \text{ at } p \text{ with multiplicity } m \right\rbrace} \subset \Gr(1, \PP^n)
\end{align*}
for any $p \in \Reg(X)$ and denote by $\mathcal{C}_{m,p}(X) \subset \PP^n$ the union of all $L \in \mathcal{L}_{m,p}(X)$.

\begin{lem}
\label{lem:contactOrderCone}
For a general $p \in X$, the cone $\mathcal{C}_{m,p} (X)$ has codimension $m-1$ in $\PP^n$ and degree $(m-1)!$. Moreover, it is smooth everywhere, except at $p$ if $m \geq 3$.
\end{lem}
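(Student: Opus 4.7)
The plan is to reduce the assertion to a standard Bertini-type statement about a complete intersection in one projective dimension lower. Fix a defining polynomial $f$ of $X$ of degree $d$, choose coordinates with $p = [1 : 0 : \ldots : 0]$, and parametrize lines through $p$ by their directions $v \in \PP^{n-1} \cong \PP(\KK^{n+1}/\aff{p})$. Expanding along the line through $p$ in direction $v$,
\begin{align*}
f(p + tv) = \sum_{i=1}^{d} t^i F_i(v),
\end{align*}
where each $F_i$ is a form of degree $i$ in $v$; the constant term vanishes because $p \in X$. The line $L = \spann(p, v)$ meets $X$ at $p$ with multiplicity at least $m$ if and only if $F_1(v) = \ldots = F_{m-1}(v) = 0$. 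Consequently $\mathcal{L}_{m,p}(X)$ is identified with the common zero locus $V \subset \PP^{n-1}$ of $F_1, \ldots, F_{m-1}$, and $\mathcal{C}_{m,p}(X) \subset \PP^n$ is the cone with vertex $p$ over $V$.

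Next, I would argue that for general $X$ and general $p \in X$ the forms $F_1, \ldots, F_{m-1}$ are general of their respective degrees. Writing $f = \sum_{i=1}^{d} x_0^{d-i} G_i(x_1, \ldots, x_n)$ with $G_i$ homogeneous of degree $i$, one reads off $F_i = G_i$. Since $d \geq m - 1$ (as $m \leq \deg X$) and the $G_i$ are the independent graded pieces of $f$ away from the constraint $f(p) = 0$, they can be prescribed independently as $X$ ranges over hypersurfaces through $p$. A classical Bertini argument, using $m - 1 \leq n - 1$ (from $m \leq n$), then shows that $F_1, \ldots, F_{m-1}$ form a regular sequence whose zero locus $V \subset \PP^{n-1}$ is a smooth complete intersection of codimension $m - 1$ and degree $1 \cdot 2 \cdots (m - 1) = (m - 1)!$.

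Finally, I would transport these properties to $\mathcal{C}_{m,p}(X)$. Coning with vertex $p$ preserves both codimension and degree, yielding $\codim \mathcal{C}_{m,p}(X) = m - 1$ and $\deg \mathcal{C}_{m,p}(X) = (m - 1)!$. Away from $p$, the linear projection from $p$ realises $\mathcal{C}_{m,p}(X) \setminus \lbrace p \rbrace \to V$ as a smooth morphism, so $\mathcal{C}_{m,p}(X)$ inherits smoothness at every point except possibly $p$. At the vertex, $\mathcal{C}_{m,p}(X)$ has local structure of an affine cone over $V$, and such a cone is smooth at the vertex if and only if $V$ is a linear subspace of $\PP^{n-1}$. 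This happens exactly when $\deg V = 1$, i.e.\ when $m \leq 2$; for $m \geq 3$, the vertex $p$ is a singular point of $\mathcal{C}_{m,p}(X)$.

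The main obstacle is the genericity assertion in the second step: one must verify that the assignment $(X, p) \mapsto (F_1, \ldots, F_{m-1})$ hits a dense subset of the parameter space of $(m - 1)$-tuples of forms of degrees $1, \ldots, m - 1$ on $\PP^{n-1}$. The coordinate computation above essentially settles this because, with $p$ fixed, varying the coefficients of $f$ lets the $G_i$ range independently; combining this with the openness of the smooth-complete-intersection locus and the irreducibility of the parameter space of pairs $(X, p)$ with $p \in X$ then completes the argument.
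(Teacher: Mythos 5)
Your proposal is correct and follows essentially the same route as the paper: both expand $f$ around $p$ into graded pieces, identify $\mathcal{C}_{m,p}(X)$ as the cone over the common zero locus of the first $m-1$ pieces, and invoke genericity of $X$ and $p$ to get a transverse intersection of smooth hypersurfaces of degrees $1,\ldots,m-1$. Your write-up merely spells out the genericity argument and the vertex analysis in more detail than the paper does.
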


\begin{proof}
We may assume that $p$ is the origin of an affine chart of $\PP^n$.
We consider the defining equation $f(x) = f_1(x) + f_2(x) + \ldots$ of $X$ in this affine chart, where $f_i$ is a homogeneous polynomial of degree $i$.
In this affine chart, the cone $\mathcal{C}_{m,p}(X)$ is the zero locus of $\lbrace f_1(x), \ldots, f_{m-1}(x)  \rbrace$.
Since $X$ and $p$ are general, the polynomials $f_1, \ldots, f_{m-1}$ define smooth irreducible hypersurfaces whose projectivizations intersect transversely.
\end{proof}

Note that $\mathcal{C}_{1,p}(X) = \PP^n$ and that $\mathcal{C}_{2,p}(X) = \TT_{X,p}$. % is the tangent hyperplane to $X$ at $p$.
Each cone $\mathcal{C}_{m,p}(X)$ is a hypersurface in the cone $\mathcal{C}_{m-1,p}(X)$.
In particular, for a line $L \in \mathcal{L}_{m, p}(X)$, we have a flag of projective spaces 
\begin{align}
\label{eq:higherContactFlag}
p \in L \subset \TT_L \mathcal{C}_{m, p}(X) \subset \TT_L\mathcal{C}_{m-1, p}(X) \subset \ldots \subset \TT_L \mathcal{C}_{2, p}(X) = \TT_{X,p} \subset \PP^n,
\end{align}
where $\TT_L \mathcal{C}_{k, p}(X)$ is a hyperplane in $\TT_L \mathcal{C}_{k-1, p}(X)$ for $2 < k \leq m$.
Here $\TT_L \mathcal{C}_{k, p}(X)$ denotes the unique embedded tangent space of the cone $\mathcal{C}_{k, p}(X)$ along $L$.
Besides, we denote by $T_L \mathcal{C}_{k, p}(X) := \lbrace \varphi: \aff{p} \to \KK^{n+1}/\aff{p} \mid \im \varphi \subset \aff{\TT_L \mathcal{C}_{k, p}(X)} / \aff{p} \rbrace$ the corresponding subspace of $T_{\PP^n,p}$.

For $m \geq 2$, we consider the rational map $\mathcal{L}_m(X) \dashrightarrow X$ which sends a general $L \in \mathcal{L}_m(X)$ to the unique point $p_L \in \Reg(X)$ at which $L$ intersects $X$ with multiplicity $m$.
For a general $L \in \mathcal{L}_m(X)$, the differential of this map at $L$ is a surjection $\Phi_L: T_{\mathcal{L}_m(X),L} \twoheadrightarrow T_{X, p_L}$. 
We can prove Theorem~\ref{thm:higherContactLines} by computing the preimages of $\Phi_L$ along the flag in~\eqref{eq:higherContactFlag}.

\begin{lem}
\label{lem:higherContactPhi}
For $m \geq 2$ and $L \in \mathcal{L}_m(X)$ general, we have
\begin{align}
\label{eq:HCker}
\ker \Phi_L &= \lbrace \varphi \in \mathcal{H} \mid \aff{p_L} \subset \ker \varphi, \, \im \varphi \subset \aff{\TT_L \mathcal{C}_{m, p_L}(X)}/\aff{L} \rbrace, \\
\label{eq:HCL}
\Phi_L^{-1}(T_{L,p_L}) &= \lbrace \varphi \in \mathcal{H} \mid \aff{p_L} \subset \ker \varphi, \, \im \varphi \subset \aff{\TT_L \mathcal{C}_{m-1, p_L}(X)}/\aff{L} \rbrace, \\
\label{eq:HCC_m}
\Phi_L^{-1}(T_L \mathcal{C}_{m, p_L}(X)) &= \left\lbrace \varphi \in \mathcal{H} \;\middle\vert \begin{array}{rcl} \varphi(\aff{p_L}) &\subset& \aff{\TT_L \mathcal{C}_{m, p_L}(X)}/\aff{L}, \\ \im \varphi &\subset& \aff{\TT_L \mathcal{C}_{m-1, p_L}(X)}/\aff{L} \end{array} \right\rbrace, \\
\label{eq:HCC_m-l1}
\Phi_L^{-1}(T_L \mathcal{C}_{m-l, p_L}(X)) &\subset \left\lbrace \varphi \in \mathcal{H} \;\middle\vert \begin{array}{rcl} \varphi(\aff{p_L}) &\subset& \aff{\TT_L \mathcal{C}_{m-l, p_L}(X)}/\aff{L}, \\ \im \varphi &\subset& \aff{\TT_L \mathcal{C}_{m-l-1, p_L}(X)}/\aff{L} \end{array} \right\rbrace, \\
\label{eq:HCC_m-l2}
\Phi_L^{-1}(T_L \mathcal{C}_{m-l, p_L}(X)) &\not\subset \left\lbrace \varphi \in \mathcal{H} \;\middle\vert \begin{array}{rcl} \varphi(\aff{p_L}) &\subset& \aff{\TT_L \mathcal{C}_{m-l+1, p_L}(X)}/\aff{L}, \\ \im \varphi &\subset& \aff{\TT_L \mathcal{C}_{m-l-1, p_L}(X)}/\aff{L} \end{array} \right\rbrace, \\
\label{eq:HCC_m-l3}
\Phi_L^{-1}(T_L \mathcal{C}_{m-l, p_L}(X)) &\not\subset \left\lbrace \varphi \in \mathcal{H} \;\middle\vert \begin{array}{rcl} \varphi(\aff{p_L}) &\subset& \aff{\TT_L \mathcal{C}_{m-l, p_L}(X)}/\aff{L}, \\ \im \varphi &\subset& \aff{\TT_L \mathcal{C}_{m-l, p_L}(X)}/\aff{L} \end{array} \right\rbrace,
\end{align}
where $1 \leq l \leq m-2$ and $\mathcal{H} := \Hom(\aff{L}, \KK^{n+1}/\aff{L})$.
\end{lem}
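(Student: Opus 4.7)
The approach is to translate the six assertions into linear conditions on a pair $(\dot p,\dot v)$ coming from a first-order deformation of $L$ in a suitable affine chart. Work in the chart $x_0 \neq 0$ with $p_L$ at the origin and $L$ along the first coordinate axis; let $w = (1,0,\ldots,0) \in \KK^n$ denote the direction of $L$. Expand the local equation as $f = f_1 + f_2 + \cdots + f_d$ with $f_i$ homogeneous of degree $i$. The contact assumption gives $f_1(w) = \cdots = f_{m-1}(w) = 0$ and $f_m(w) \neq 0$ (the latter because a general $L \in \mathcal{L}_m(X)$ has contact exactly $m$), and $\aff{\TT_L \mathcal{C}_{j, p_L}(X)}$ is cut out at $w$ by the linear forms $(df_1)_w,\ldots,(df_{j-1})_w$, which are linearly independent by Lemma~\ref{lem:contactOrderCone}.

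A first-order deformation of $L$ in $\mathcal{L}_m(X)$ is recorded by $\dot p \in \KK^n$ and $\dot v \in \KK^n/\spann(w)$: the deformed line is $L' = \epsilon\dot p + \KK(w + \epsilon\dot v)$, with $\epsilon\dot p$ taken literally to be the distinguished basepoint $p_{L'}$ (this fixes the shift-along-$L$ ambiguity). Via Corollary~\ref{cor:differentialStiefel} the associated $\varphi \in \mathcal{H}$ has $\varphi$-values at a basis $(e_0, e_1)$ of $\aff L$ given by $\dot p \bmod \aff L$ and $\dot v \bmod \aff L$, and $\Phi_L(\varphi) = \dot p \in T_{X, p_L}$. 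Expanding $f(\epsilon\dot p + t(w + \epsilon\dot v))$ to first order in $\epsilon$ and killing the $t^0, t^1, \ldots, t^{m-1}$ coefficients yields $f_1(\dot p) = 0$ (the tangency of $\dot p$ to $X$) together with the linked system
\[
(df_{k+1})_w(\dot p) + (df_k)_w(\dot v) = 0, \qquad k = 1, \ldots, m-1.\qquad(\ast_k)
\]
Euler's identity $(df_j)_w(w) = j f_j(w)$ together with $f_j(w) = 0$ for $j < m$ makes each $(\ast_k)$ for $k \leq m-2$ independent of the $\spann(w)$-components of $\dot p$ and $\dot v$; $(\ast_{m-1})$, in which $f_m(w) \neq 0$, then uniquely pins down the $\spann(w)$-component of $\dot p$ from the rest of the data.

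Each of the six statements is now a direct substitution. For \eqref{eq:HCker} setting $\dot p = 0$ collapses $(\ast_k)$ to $(df_k)_w(\dot v) = 0$ for $k = 1, \ldots, m-1$, i.e.\ $\dot v \in \aff{\TT_L \mathcal{C}_{m, p_L}(X)}$. For \eqref{eq:HCL} the assumption $\dot p \in \spann(w)$ (equivalently $\aff{p_L} \subset \ker \varphi$) kills the first summand of $(\ast_k)$ for $k \leq m-2$, leaving $\dot v \in \aff{\TT_L \mathcal{C}_{m-1, p_L}(X)}$; $(\ast_{m-1})$ only fixes the scalar of $\dot p$ along $w$. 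For \eqref{eq:HCC_m} the assumption $\dot p \in \aff{\TT_L \mathcal{C}_{m, p_L}(X)}$ makes $(df_{k+1})_w(\dot p)$ vanish for $k+1 \leq m-1$, again forcing $\dot v \in \aff{\TT_L \mathcal{C}_{m-1, p_L}(X)}$; the lone surviving $(\ast_{m-1})$ can always be solved for the $\spann(w)$-component of $\dot p$ because $f_m(w) \neq 0$, so the containment is an equality (a matching dimension count confirms this). For \eqref{eq:HCC_m-l1} the assumption $\dot p \in \aff{\TT_L \mathcal{C}_{m-l, p_L}(X)}$ makes $(df_{k+1})_w(\dot p)$ vanish only for $k+1 \leq m-l-1$, yielding $\dot v \in \aff{\TT_L \mathcal{C}_{m-l-1, p_L}(X)}$; the remaining relations $(\ast_k)$ for $k \geq m-l-1$ impose further constraints, which is why this inclusion is strict. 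Finally, for \eqref{eq:HCC_m-l2} and \eqref{eq:HCC_m-l3} I produce an explicit tangent vector by choosing $\dot p \in \aff{\TT_L \mathcal{C}_{m-l, p_L}(X)}$ with $(df_{m-l})_w(\dot p) \neq 0$ — possible by the linear independence of the $(df_j)_w$ — and then solving $(\ast_k)$ for a compatible $\dot v$; the resulting $\dot v$ satisfies $(df_{m-l-1})_w(\dot v) = -(df_{m-l})_w(\dot p) \neq 0$, so $\dot v \notin \aff{\TT_L \mathcal{C}_{m-l, p_L}(X)}$, while manifestly $\dot p \notin \aff{\TT_L \mathcal{C}_{m-l+1, p_L}(X)}$.

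The principal obstacle is not any individual deduction but keeping straight the two parametric ambiguities — shift of basepoint along $L$, and rescaling of direction — and their interplay with the coordinate-free $\Hom$-description of $T_L \Gr(1, \PP^n)$. Once Corollary~\ref{cor:differentialStiefel} supplies the dictionary and Euler's identity is used to separate $k \leq m-2$ from $k = m-1$, each of the six assertions becomes a single line of linear algebra; the genericity of $X$ enters only to ensure $f_m(w) \neq 0$ and the linear independence of the $(df_j)_w$ for $j < m$.
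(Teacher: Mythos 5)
Your proposal is correct and follows essentially the same route as the paper's proof: the same affine chart adapted to the flag $p_L \in L \subset \TT_L\mathcal{C}_{m,p_L}(X) \subset \cdots \subset \TT_{X,p_L}$, the same Taylor expansion of $f$ along a first-order deformation producing the linked system $(df_{k+1})_w(\dot p) + (df_k)_w(\dot v) = 0$, and the same use of Euler's relation together with $f_m(w)\neq 0$ to separate the case $k=m-1$. The only differences are organizational — you substitute the membership conditions on $\dot p$ directly into the system, whereas the paper computes the fibers $\Phi_L^{-1}(e_i)$ direction by direction and spans, and you derive \eqref{eq:HCker} from the same system with $\dot p = 0$ rather than from the separate dimension count for $\mathcal{L}_{m,p_L}(X)$ — so the two arguments are interchangeable.
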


We prove Theorem~\ref{thm:higherContactLines} before we give the technical proof of Lemma~\ref{lem:higherContactPhi}.
The ideas for the computations in the next two proofs were developed together with Emre Sert\"oz.

\begin{proof}[Proof of Theorem~\ref{thm:higherContactLinesFULL} and Theorem~\ref{thm:higherContactLines}.]
We choose coordinates for $\aff{L}$ and $\KK^{n+1}/\aff{L}$ such that we can write every $\varphi \in \Hom(\aff{L}, \KK^{n+1}/\aff{L})$ as a matrix in $\KK^{2 \times (n-1)}$.
For $\aff{L}$, we pick a basis $\lbrace e_0, e_1\rbrace$ such that $e_0 \in \aff{p_L} \setminus \lbrace 0 \rbrace$ and $e_1 \in \aff{L} \setminus \aff{p_L}$.
For $\KK^{n+1}/\aff{L}$, we pick a basis $\lbrace e_2, \ldots, e_{n} \rbrace$ such that $\aff{\TT_L \mathcal{C}_{k,p_L}(X)}/\aff{L}$ is spanned by $e_2, \ldots, e_{n-k+1}$ for $2 \leq k \leq m$.
The first row of a matrix in $\KK^{2 \times (n-1)}$ encoding a map in $\Hom(\aff{L}, \KK^{n+1}/\aff{L})$ corresponds to $e_0$, the second row to $e_1$, and the $i$-th column corresponds to $e_{i+1}$. 
According to Lemma~\ref{lem:higherContactPhi}, the kernel of $\Phi_L$ is spanned by
\begin{align*}
\begin{blockarray}{cccc}
 &  &  & \\ 
 &  &  & \\  
\begin{block}{[cccc]}
  0 & 0 & \cdots & 0 \\
  1 & 0 & \cdots & 0 \\
\end{block}
\end{blockarray} \, , \quad \ldots \, , \quad
\begin{blockarray}{ccccccc}
 &  &  & \scriptstyle n-m \\ 
 &  &  &  \downarrow \\  
\begin{block}{[ccccccc]}
  0 & \cdots & 0 & 0 & 0 & \cdots & 0 \\
  0 & \cdots & 0 & 1 & 0 & \cdots & 0 \\
\end{block}
\end{blockarray} \, .
\end{align*}
$\Phi_L^{-1}(T_{L,p_L})$ is additionally spanned by 
$ \quad
\begin{blockarray}{ccccccc}
 &  &  & \scriptstyle  n-m+1 \\ 
 &  &  &  \downarrow \\  
\begin{block}{[ccccccc]}
  0 & \cdots & 0 & 0 & 0 & \cdots & 0 \\
  0 & \cdots & 0 & 1 & 0 & \cdots & 0 \\
\end{block}
\end{blockarray}  \, .
$ \\
$\Phi_L^{-1}(T_L \mathcal{C}_{m,p_L}(X))$ is spanned by all the matrices above as well as
\begin{align*}
\begin{blockarray}{cccc}
 &  &  & \\ 
 &  &  & \\  
\begin{block}{[cccc]}
  1 & 0 & \cdots & 0 \\
  0 & 0 & \cdots & 0 \\
\end{block}
\end{blockarray} \, , \quad \ldots \, , \quad
\begin{blockarray}{ccccccc}
 &  &  &\scriptstyle n-m \\ 
 &  &  & \downarrow \\  
\begin{block}{[ccccccc]}
  0 & \cdots & 0 & 1 & 0 & \cdots & 0 \\
  0 & \cdots & 0 & 0 & 0 & \cdots & 0 \\
\end{block}
\end{blockarray} \,.
\end{align*}
$\Phi_L^{-1}(T_L \mathcal{C}_{m-1,p_L}(X))$ is additionally spanned by 
\begin{align*}
\begin{blockarray}{cccccccc}
&  &  &\scriptstyle n-m+1 \\ 
&  &  & \downarrow \\  
\begin{block}{[cccccccc]}
0 & \cdots & 0 & 1 & 0 & 0 & \cdots & 0 \\
0 & \cdots & 0 & 0 & C_1 & 0 & \cdots & 0 \\
\end{block}
\end{blockarray} \, 
\end{align*}
for some non-zero constant $C_1$.
Analogously, $\Phi_L^{-1}(T_L \mathcal{C}_{m-2,p_L}(X))$ is additionally spanned by 
\begin{align*}
\begin{blockarray}{cccccccc}
&  &  &\scriptstyle n-m+2 \\ 
&  &  & \downarrow \\  
\begin{block}{[cccccccc]}
0 & \cdots & 0 & 1 & 0 & 0 & \cdots & 0 \\
0 & \cdots & 0 & c_{2,1} & C_2 & 0 & \cdots & 0 \\
\end{block}
\end{blockarray} \, , 
\end{align*}
where $C_2 \neq 0$ and $c_{2,1}$ are constants.
More generally, for $1 \leq l \leq m-2$, a non-zero matrix in $\Phi_L^{-1}(T_L \mathcal{C}_{m-l,p_L}(X)) \setminus \Phi_L^{-1}(T_L \mathcal{C}_{m-l+1,p_L}(X))$ is
\begin{align*}
\begin{blockarray}{ccccccccccc}
&  &  & & & & \scriptstyle n-m+l \\ 
&  &  & & & & \downarrow \\  
\begin{block}{[ccccccccccc]}
0 & \cdots & 0 & 0       & \cdots & 0         & 1 & 0 & 0 & \cdots & 0 \\
0 & \cdots & 0 & c_{l,1} & \cdots & c_{l,l-2} & c_{l,l-1} & C_l & 0 & \cdots & 0 \\
\end{block}
\end{blockarray} \, , 
\end{align*}
where $C_l \neq 0$ and $c_{l,1}, \ldots, c_{l,l-1}$ are some constants.
In particular, for $l=m-2$, this matrix is
\begin{align*}
\begin{blockarray}{cccccccc}  
\begin{block}{[cccccccc]}
  0 & \cdots & 0 & 0         & \cdots & 0       & 1       & 0 \\
  0 & \cdots & 0 & c_{m-2,1} & \cdots & c_{m-2,m-4} & c_{m-2,m-3} & C_{m-2} \\
\end{block}
\end{blockarray} \, .
\end{align*}
All these matrices together span $T_{\mathcal{L}_m(X),L}$. 
Thus, the conormal space $N^\ast_{\mathcal{L}_m(X),L}$ is spanned by maps in $\Hom(\KK^{n+1}/\aff{L}, \aff{L})$ corresponding to 
\begin{align}
\label{eq:normalBasis}
\begin{split}
&\begin{blockarray}{cccc}
\begin{block}{[cccc]}
  0 & \cdots & 0 & 1 \\
  0 & \cdots & 0 & 0 \\
\end{block}
\end{blockarray} \, ,   \quad
 \begin{blockarray}{ccccccccc} 
\begin{block}{[ccccccccc]}
  0 & \cdots & 0 & -C_1 & -c_{2,1} & -c_{3,1}  & \cdots & -c_{m-2,1} & 0 \\
  0 & \cdots & 0 & 0        & 1            & 0             & \cdots & 0 & 0 \\
\end{block}
\end{blockarray} \, , \, \\ 
&\ldots \, , 
\begin{blockarray}{cccccc}
\begin{block}{[cccccc]}
  0 & \cdots & 0 & -C_{m-3} & -c_{m-2,m-3} & 0  \\
  0 & \cdots & 0 & 0   & 1 & 0\\
\end{block}
\end{blockarray} \, , \quad
\begin{blockarray}{ccccc}
\begin{block}{[ccccc]}
  0 & \cdots & 0 & -C_{m-2} & 0  \\
  0 & \cdots & 0 & 0 &  1\\
\end{block}
\end{blockarray} \, .
\end{split}
\hspace*{-3mm}
\end{align}

Since the matrix entry in row $2$ and column $n-m+1$ is zero in every matrix in $N^\ast_{\mathcal{L}_m(X),L}$, the only rank one matrices in $N^\ast_{\mathcal{L}_m(X),L}$ are scalar multiples of the first matrix in~\eqref{eq:normalBasis}.
By our choice of coordinates, the linear map $\varphi: \KK^{n+1}/\aff{L} \to \aff{L}$ corresponding to the first matrix in~\eqref{eq:normalBasis} has image $\aff{p_L}$ and kernel $\aff{\TT_{X,p_L}}/\aff{L}$.
Thus, set-theoretically the intersection of $\PP (N^\ast_{\mathcal{L}_m(X),L})$ with the Segre variety $\Seg(\KK^{n+1}/\aff{L}, \aff{L})$ 
consists just of one point as claimed in Theorem~\ref{thm:higherContactLines}.

We can either see from~\eqref{eq:normalBasis} or directly from~\eqref{eq:HCC_m} that
$$
N^\ast_{\mathcal{L}_m(X),L} \subset \lbrace \psi \in \Hom(\KK^{n+1}/\aff{L}, \aff{L}) \mid \aff{\TT_L \mathcal{C}_{m,p_L}(X)}/\aff{L} \subset \ker \psi \rbrace. 
$$
Hence, we can embed $N^\ast_{\mathcal{L}_m(X),L}$ canonically into $\Hom(\KK^{n+1}/\aff{\TT_L \mathcal{C}_{m,p_L}(X)}, \aff{L})$ and denote the image of this embedding by $N'$.
In our coordinates, this embedding simply forgets the first $n-m$ columns of the matrices in~\eqref{eq:normalBasis}.
As before, $\PP(N')$ intersects the Segre variety $\Seg(\KK^{n+1}/\aff{\TT_L \mathcal{C}_{m,p_L}(X)}, \aff{L})$ set-theoretically at one point.
Since the codimension of this Segre variety in its ambient space $\PP(\Hom(\KK^{n+1}/\aff{\TT_L \mathcal{C}_{m,p_L}(X)}, \aff{L}))$ is $m-2$ and the dimension of $\PP(N')$ is also $m-2$,
the intersection multiplicity at the unique point of intersection is $\deg \Seg(\KK^{n+1}/\aff{\TT_L \mathcal{C}_{m,p_L}(X)}, \aff{L}) = m-1$.
Thus, also the intersection multiplicity of $\PP(N^\ast_{\mathcal{L}_m(X),L})$ and $\Seg(\KK^{n+1}/\aff{L}, \aff{L})$ at their unique point of intersection is $m-1$.
\end{proof}

\begin{proof}[Proof of Lemma~\ref{lem:higherContactPhi}.]
We consider the rational map $\mathcal{L}_m(X) \dashrightarrow X$, $L \mapsto p_L$.
It restricts to $\mathcal{L}_{m,p_L}(X) \dashrightarrow \lbrace p_L \rbrace$, which shows  $\Phi_L(T_{\mathcal{L}_{m,p_L}(X),L}) \subset T_{\lbrace p_L \rbrace, p_L} = \lbrace 0 \rbrace$.
Since $\mathcal{L}_{m,p_L}(X)$ and $\ker \Phi_L$ both have dimension $n-m$, we have derived the equality $T_{\mathcal{L}_{m,p_L}(X),L} = \ker \Phi_L$.
The image of every $\varphi \in T_{\mathcal{L}_{m,p_L}(X),L}$ is contained in $\aff{\TT_L \mathcal{C}_{m, p_L}(X)}/\aff{L}$ and its kernel contains $\aff{p_L}$.
Since both vector spaces in~\eqref{eq:HCker} have the same dimension, the equality in~\eqref{eq:HCker} is proven.

For the remaining assertions, we choose coordinates $x_0,\ldots, x_n$ on $\PP^n$ such that we can express the flag in~\eqref{eq:higherContactFlag} with the following zero loci:
\begin{align*}
p_L &= Z(x_1, \ldots, x_n), \\
L &= Z(x_2, \ldots, x_n), \\
\TT_L \mathcal{C}_{k, p}(X) &= Z(x_{n-k+2}, \ldots, x_n) \quad \text{ for } 2 \leq k \leq m.
\end{align*}
In the following, we work in the affine chart $\PP^n \setminus Z(x_0) \isom \KK^n$ with the standard basis $e_1, \ldots, e_n$.
We extend this basis to a basis for $\KK^{n+1}$ by adding $e_0 \in \aff{p_L} \setminus \lbrace 0 \rbrace$.
Note that these coordinates are compatible with the coordinates in the proof of Theorem~\ref{thm:higherContactLines}.
We write $f(x) = f_1(x) + f_2(x) + \ldots $ for the defining equation of $X$ in the chosen affine chart, where $f_i$ is homogeneous of degree $i$.
By our choice of coordinates, there are constants $c_{i,j}$ such that the gradients of the $f_i$ satisfy
\begin{align}
\label{eq:gradients}
\begin{split}
\bigtriangledown f_1 &= (0, \ldots, 0, c_{1,n}) \text{ for } c_{1,n} \neq 0, \\
\bigtriangledown f_2 (e_1) &= (0, \ldots, 0, c_{2,n-1}, c_{2,n}) \text{ for } c_{2,n-1} \neq 0, \\
&\vdots \\
\bigtriangledown f_{m-1} (e_1) &= (0, \ldots, 0, c_{m-1, n-m+2}, \ldots, c_{m-1,n}) \text{ for } c_{m-1, n-m+2} \neq 0.
\end{split}
\end{align}

For every tangent direction $e_1, \ldots, e_{n-1}$ to $X \setminus Z(x_0)$ at $p_L$, we compute its fiber under $\Phi_L$ as follows. 
For each $1 \leq i \leq n-1$, we choose a path of points \mbox{$\gamma_i (t) = e_i \cdot t  + O(t^2) \in X \setminus Z(x_0)$}.
Along the path $\gamma_i$, we compute all possible paths of lines $L_i(t) \in \mathcal{L}_{m,\gamma_i (t)}(X)$ such that $L_i(0) = L$.
For this, we consider the Taylor expansion of $f$ around $\gamma_i (t)$.
Hence, we perform a linear change of coordinates $\tilde{x} := x - \gamma_i (t)$ such that $f(x) = f(\tilde{x} + \gamma_i (t)) =: F^{(i)}(\tilde{x})$.
Now we want to write again $F^{(i)} (\tilde{x}) = F^{(i)}_0 + F^{(i)}_1 (\tilde{x}) + F^{(i)}_2(\tilde{x}) + \ldots$, where $F^{(i)}_j$ is homogeneous of degree $j$ in~$\tilde{x}$.
For $j =1, \ldots, m$, we have $f_j (\tilde{x} + \gamma_i(t)) = f_j(\tilde{x}) + t \langle e_i, \bigtriangledown f_j(\tilde{x}) \rangle + O( t^2)$, where $\langle \cdot, \cdot \rangle$ denotes the nondegenerate bilinear form $(x,y) \mapsto \sum_k x_k y_k$.
This implies
$$F_j^{(i)}(\tilde{x}) = f_j(\tilde{x}) + t \langle e_i, \bigtriangledown f_{j+1}(\tilde{x}) \rangle + O(t^2) \quad\quad \text{ for } 1 \leq j \leq m-1.$$
The solutions of these $m-1$ polynomials are the directions with contact order of at least $m$ at $\gamma_i(t)$.
Since we are only interested in paths of lines that start at $L$, we want to compute those solutions of $F^{(i)}_1(\tilde{x}) = 0, \ldots, F^{(i)}_{m-1}(\tilde{x}) = 0$ that are of the form $v(t) := e_1 + d \cdot t + O(t^2)$ for some $d \in \KK^n$.
We get that 
\begin{align}
\label{eq:toSolve}
F^{(i)}_j(v(t)) = t \left\langle d, \bigtriangledown f_j(e_1) \right\rangle + t \left\langle e_i, \bigtriangledown f_{j+1} (e_1) \right\rangle + O(t^2),
\end{align}
since $f_j(e_1) = 0$ for $1 \leq j \leq m-1$.
Each solution $v(t)$ of $F^{(i)}_1(\tilde{x}) = 0,$ $\ldots,$ $F^{(i)}_{m-1}(\tilde{x}) = 0$ defines a path of lines $L_{i, v(t)}(t) \in \mathcal{L}_{m,\gamma_i(t)}(X)$ starting at $L$, where each line $L_{i, v(t)}(t)$ is spanned by $\gamma_i(t)$ and $\gamma_i(t) + v(t)$.
To this path of lines corresponds the tangent vector
 $\varphi_{i, v(t)} \in T_{\mathcal{L}_m(X),L} \subset \Hom(\aff{L}, \KK^{n+1}/\aff{L})$ defined by
\begin{align}
\label{eq:homomorphism}
\begin{split}
e_0 &\longmapsto {\gamma_i'(0)} + \aff{L} =  {e_i} + \aff{L}, \\ {e_1} &\longmapsto {\gamma_i'(0)} + {v'(0)} + \aff{L} = {e_i} + {d} + \aff{L}.
\end{split}
\end{align}
Since $\varphi_{i, v(t)}$ depends only on $d$, we write $\varphi_{i, d} := \varphi_{i, v(t)}$.
The fiber of the tangent direction $e_i$ under $\Phi_L$ is spanned by all $\varphi_{i, d}$ such that $d \in \KK^n$ is a solution of 
\begin{align}
\label{eq:systemOfEquations}
F_1^{(i)}(v(t))=0, \ldots, F_{m-1}^{(i)}(v(t))=0.
\end{align}

\paragraph*{$\bs{i=1:}$} The fiber of the tangent direction $e_1$ under $\Phi_L$ is $\Phi_L^{-1}(T_{L,p_L})$. 
By~\eqref{eq:toSolve}, \eqref{eq:gradients} and Euler's relation for homogeneous polynomials, we have 
\begin{align*}
%\label{eq:F^1}
\begin{split}
F_1^{(1)} (v(t)) &= c_{1,n} d_n \cdot t + O(t^2), \\
F_2^{(1)} (v(t)) &= \left( c_{2,n-1} d_{n-1} + c_{2,n}d_n \right) \cdot t + O(t^2), \\
&\vdots \\
F_{m-2}^{(1)} (v(t)) &= \left( c_{m-2,n-m+3} d_{n-m+3} + \ldots +  c_{m-2,n}d_n \right) \cdot t + O(t^2), \\
F_{m-1}^{(1)} (v(t)) &= \left( c_{m-1,n-m+2} d_{n-m+2} + \ldots +  c_{m-1,n}d_n + m f_m(e_1) \right) \cdot t + O(t^2).
\end{split}
\end{align*}
Solving for $d$ implies $d_n = 0, d_{n-1}=0, \ldots, d_{n-m+3}=0$.
Since $L \in \mathcal{L}_m(X)$ was chosen generally, it has exactly intersection multiplicity $m$ at $p_L$.
Thus, we have that $f_m(e_1) \neq 0$ and $d_{n-m+2}$ must be a non-zero constant.
Furthermore, $d_1, \ldots, d_{n-m+1}$ are arbitrary. 
Since $\Phi_L^{-1}(T_{L,p_L})$ is spanned by all $\varphi_{1,d}$ for these solutions $d$, where $\varphi_{1,d}$ is defined by~\eqref{eq:homomorphism}, we have shown~\eqref{eq:HCL}.

\paragraph*{$\bs{2 \leq i \leq n-m+1:}$}
The fibers of the tangent directions $e_1, \ldots, e_{n-m+1}$ under $\Phi_L$ span $\Phi_L^{-1}(T_L \mathcal{C}_{m,p_L}(X))$.
For $1 \leq j \leq m-2$, we have $F^{(i)}_j(v(t)) = F^{(1)}_j(v(t))$.
This implies again $d^{(i)}_n = 0, d^{(i)}_{n-1}=0, \ldots, d^{(i)}_{n-m+3}=0$ for a solution $d^{(i)}$ of~\eqref{eq:systemOfEquations}.
Furthermore, the equality 
$$F_{m-1}^{(i)} (v(t)) = ( c_{m-1,n-m+2} d_{n-m+2} + \ldots +  c_{m-1,n}d_n + \tfrac{\partial f_m}{\partial x_i}(e_1) ) \cdot t + O(t^2)$$
shows that $d^{(i)}_{n-m+2}$ is some constant.
As before, $d^{(i)}_1, \ldots, d^{(i)}_{n-m+1}$ are arbitrary. 
Since $\Phi_L^{-1}(T_L \mathcal{C}_{m,p_L}(X))$ is spanned by all $\varphi_{i,d^{(i)}}$ for $1 \leq i \leq n-m+1$ and solutions $d^{(i)}$ of~\eqref{eq:systemOfEquations}, where $\varphi_{i,d^{(i)}}$ is defined by~\eqref{eq:homomorphism}, we have shown~\eqref{eq:HCC_m}.

\paragraph*{$\bs{i = n-k}$ for $\bs{1 \leq k \leq m-2:}$}
For each $1 \leq l \leq m-2$, the fibers of the  tangent directions $e_1$, $\ldots$, $e_{n-m+l+1}$ under $\Phi_L$ span $\Phi_L^{-1}(T_L \mathcal{C}_{m-l,p_L}(X))$.
For  $1 \leq j \leq k-1$, we have $F^{(i)}_j(v(t)) = F^{(1)}_j(v(t))$.
So a solution $d^{(i)}$ of~\eqref{eq:systemOfEquations} satisfies $d^{(i)}_n = 0, \ldots, d^{(i)}_{n-k+2}=0$.
Since $$F_{k}^{(i)} (v(t)) = ( c_{k,n-k+1} d_{n-k+1} + \ldots +  c_{k,n}d_n + c_{k+1, n-k} ) \cdot t + O(t^2)$$ and the two constants $c_{k,n-k+1}$ and $c_{k+1, n-k}$ are non-zero, we get that $d^{(i)}_{n-k+1}$ must be a non-zero constant. 
Finally, we have for $k+1 \leq j \leq m-1$ that $$F_j^{(i)}(v(t)) = ( c_{j,n-j+1} d_{n-j+1} + \ldots +  c_{j,n}d_n + \tfrac{\partial f_{j+1}}{\partial x_i}(e_1) ) \cdot t + O(t^2),$$ which implies that $d^{(i)}_{n-k}, \ldots, d^{(i)}_{n-m+2}$ are some constants.
As before, the remaining entries $d^{(i)}_1, \ldots, d^{(i)}_{n-m+1}$ are arbitrary. 
Since $\Phi_L^{-1}(T_L \mathcal{C}_{m-l,p_L}(X))$ is spanned by all $\varphi_{i,d^{(i)}}$ for $1 \leq i \leq n-m+l+1$ and solutions $d^{(i)}$ of~\eqref{eq:systemOfEquations}, where $\varphi_{i,d^{(i)}}$ is defined by~\eqref{eq:homomorphism}, we have shown~\eqref{eq:HCC_m-l1} -- \eqref{eq:HCC_m-l3}.
\end{proof}

\section{Isotropic Varieties}
\label{sec:Iso}

We investigate a dual notion to \emph{coisotropic varieties}.
Instead of imposing rank one conditions on the conormal spaces of a subvariety of a Grassmannian, we require such conditions to hold for its tangent spaces.

\begin{defn}
\label{defn:isotropy}
An irreducible subvariety $\Sigma \subset \Gr(\ell, \PP^n)$ of dimension $d \geq 1$ is \emph{isotropic} if, for every $L \in \Reg(\Sigma)$, the tangent space of $\Sigma$ at $L$ is spanned by rank one homomorphisms or is in the Zariski closure of the set of such spaces, i.e.,
$$\PP(T_{\Sigma,L}) \in \Sec_{d-1}(\Seg(\aff{L}, \, \KK^{n+1}/\aff{L})).$$
Moreover, $\Sigma$ is \emph{strongly isotropic} if, for every $L \in \Reg(\Sigma)$, the rank of every homomorphism in the tangent space of $\Sigma$ at $L$ is at most one, i.e.,
$$\PP(T_{\Sigma,L}) \subset \Seg(\aff{L}, \, \KK^{n+1}/\aff{L}).$$
\end{defn}

First, we prove that all subvarieties of Grassmannians of low enough codimension are isotropic.

\begin{proof}[Proof of Proposition~\ref{prop:keyPropIso}]
Let $\Sigma \subset \Gr(\ell, \PP^n)$ be a subvariety with $\codim \Sigma \leq n-1$.
The dimension $d$ of $\Sigma$ is at least $(\ell+1)(n-\ell)-(n-1) = \ell(n-\ell-1)+1$. 
Moreover, the codimension of $\Seg(\aff{L},\KK^{n+1}/\aff{L})$ in $\PP (\Hom(\aff{L}, \KK^{n+1}/\aff{L}))$ is $\ell(n-\ell-1)$ for every $L \in \Gr(\ell, \PP^n)$.
By Lemma~\ref{lem:grSecSpans}, we have 
$$\Sec_{d-1}(\Seg(\aff{L}, \KK^{n+1}/\aff{L})) =\Gr(d-1, \PP(\Hom(\aff{L},\KK^{n+1}/\aff{L}))).$$
In particular, $\Sigma$ is isotropic.
\end{proof}

\subsection{Isotropic Curves}
\label{ssec:isotropicCurves}

In this section, we show that the isotropic curves are exactly the curves of osculating spaces to projective curves.
For a detailed treatment of osculating spaces and osculating bundles of projective curves we refer to~\cite{pieneCurves}.
We only give a brief and non-exhaustive summary here.
For a nondegenerate irreducible curve $C \subset \PP^n$ and an integer $0 \leq k \leq n$, we define the \emph{osculating $k$-space} $L_k(x)$ at a point $x \in \Reg(C)$ to be the $k$-dimensional subspace of $\PP^n$ which has the highest order of contact with $C$ at $x$.
This order of contact of $L_k(x)$ with $C$ is at least $k+1$, and at a general point $x \in C$ it is exactly $k+1$.
Those osculating $k$-spaces with a higher contact order are called \emph{hyperosculating} or \emph{stationary} $k$-spaces.
For example, the tangent line at a flex point is a stationary tangent. 
For $k < n$, we define the \emph{curve $\Osc_k(C) \subset \Gr(k, \PP^n)$ of osculating $k$-spaces} of $C$ as the Zariski closure of the set of all osculating $k$-spaces $L_k(x)$ at smooth points $x \in C$. 

\begin{rem}
Clearly, $\Osc_0(C) = C$.
The curve $\Osc_1(C)$ consists of the tangent lines of $C$. 
It was denoted by $\mathcal{T}(C)$ for $C \subset \PP^3$ in Example~\ref{ex:tangentLinesToCurveRationalMap}.
The curve $\Osc_{n-1}(C) \subset \Gr(n-1, \PP^n) \isom (\PP^n)^\ast$ is also known as the \emph{dual curve} of $C$.
For any nondegenerate curve $C \subset \PP^n$, the dual curve of $\Osc_{n-1}(C)$ is again the original curve~ $C$~\cite[Thm.~5.1]{pieneCurves}. 
$\hfill\diamondsuit$
\end{rem}

Of course we can also define osculating spaces to degenerate curves, which we will use in the formulation of Theorem~\ref{thm:isotropicCurves}.
If a curve $C \subset \PP^n$ spans $\PP^m \subset \PP^n$ for $m < n$, we define the osculating $k$-spaces for $0 \leq k \leq m$ inside of $\PP^m$ such that we have $m$ curves $\Osc_k(C) \subset \Gr(k,\PP^n)$ for $0 \leq k < m$.

Now we consider an irreducible isotropic curve $\Sigma \subset \Gr(\ell, \PP^n)$.
Since each curve in $\PP^n$ or $(\PP^n)^\ast$ is trivially isotropic, we assume $1 \leq \ell \leq n-2$.
For each $L \in \Reg(\Sigma)$, the tangent space $T_{\Sigma, L}$ is spanned by a rank one homomorphism $\varphi_L: \aff{L} \to \KK^{n+1} / \aff{L}$.
Hence, to every $L \in \Reg(\Sigma)$ we associate unique linear spaces $L^- \in \Gr(\ell-1, \PP^n)$ and $L^+ \in \Gr(\ell+1, \PP^n)$ such that $L^- \subset L \subset L^+$,  $\ker \varphi_L = \aff{L^-}$, and $\im \varphi_L = \aff{L^+} / \aff{L}$.
This provides us with a rational map  $\tau: \Sigma \dashrightarrow \Gr(\ell-1, \PP^n) \times \Gr(\ell+1, \PP^n)$.
We denote the projections of the codomain onto the first or second factor by $\pi^-$ or $\pi^+$, respectively.
Moreover, for $\star \in \lbrace +, - \rbrace$, we let $\varpi^\star := \pi^\star \circ\tau$ and define $\Sigma^\star \subset \Gr(\ell \star 1, \PP^n)$ as the Zariski closure of the image of $\varpi^\star$.
We summarize these maps in the following commutative diagram:
\begin{center}
\begin{small}
\begin{tikzcd}[framed, row sep = small]
& & \Sigma^- 
\arrow[d, hook]\\
& & \Gr(\ell-1, \PP^n) \\
\Sigma
\arrow[r, dashed, "\tau"] 
\arrow[rrdd, dashed, bend right = 16, "\varpi^+" swap] 
\arrow[rruu, dashed, bend left = 16, "\varpi^-"] &
\Gr(\ell-1, \PP^n) \times \Gr(\ell+1, \PP^n)
\arrow[dr, two heads, "\pi^+" {swap, near end}] 
\arrow[ur, two heads, "\pi^-" near end] \\
& & \Gr(\ell+1, \PP^n) \\
& & \Sigma^+
\arrow[u, hook]
\end{tikzcd}
\end{small}
\end{center}
Since $\Sigma$ is an irreducible curve, $\Sigma^-$ and $\Sigma^+$ are both irreducible and have each dimension zero or one. 
If $\Sigma^-$ is a point, then all $L \in \Sigma$ contain the same  $(\ell-1)$-dimensional subspace.
Analogously, if $\Sigma^+$ is a point, then all $L \in \Sigma$ are contained in the same $(\ell+1)$-dimensional projective space.
If $\Sigma^-$ or $\Sigma^+$ is a curve, we show now that it is isotropic and that we can recover $\Sigma$ from it.

\begin{lem}
\label{lem:sigmaMinus}
If $\Sigma^-$ is a curve, then, for each $L \in \Reg(\Sigma)$ such that $L^- \in \Reg(\Sigma^-)$, the image of every homomorphism in $T_{\Sigma^-, L^-}$ is contained in $\aff{L} / \aff{L^-}$.
In particular, $\Sigma^-$ is isotropic.
\end{lem}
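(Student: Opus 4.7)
The plan is to apply Corollary~\ref{cor:keyLemmaTangentCorrespondence} to the rational map $\varpi^-\colon \Sigma \dashrightarrow \Gr(\ell-1,\PP^n)$ defined by $L \mapsto L^-$. Since $\aff{L^-} \subset \aff{L}$ for every $L \in \Reg(\Sigma)$ where $\varpi^-$ is defined, we are in the first situation of the corollary (with $k = \ell-1 \leq \ell$). Hence, for a general $L \in \Sigma$ and every $\varphi \in T_{\Sigma,L}$, we have
\begin{align*}
\varphi\big|_{\aff{L^-}} = \big(D_L \varpi^-(\varphi) \mod \aff{L}\big) \in \Hom\big(\aff{L^-},\, \KK^{n+1}/\aff{L}\big).
\end{align*}

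Next, I would specialize to the rank-one generator $\varphi_L$ of the one-dimensional tangent space $T_{\Sigma,L}$. By construction $\ker \varphi_L = \aff{L^-}$, so $\varphi_L|_{\aff{L^-}} = 0$, and the displayed identity forces $(D_L \varpi^-(\varphi_L) \mod \aff{L}) = 0$. Equivalently, $\im \bigl(D_L \varpi^-(\varphi_L)\bigr) \subset \aff{L}/\aff{L^-}$. Now the key point is that $\varpi^-\colon \Sigma \dashrightarrow \Sigma^-$ is a dominant rational map between irreducible curves over a field of characteristic zero, so its differential is non-zero at a general point of $\Sigma$. For such a general $L$ with $L^- \in \Reg(\Sigma^-)$, the element $D_L \varpi^-(\varphi_L)$ is therefore a non-zero vector in the one-dimensional space $T_{\Sigma^-, L^-}$ and hence spans it. Consequently, every homomorphism in $T_{\Sigma^-,L^-}$ is a scalar multiple of $D_L \varpi^-(\varphi_L)$ and has image contained in $\aff{L}/\aff{L^-}$, which is the first claim.

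For the "in particular" statement, observe that $\aff{L}/\aff{L^-}$ is one-dimensional (as $\dim L - \dim L^- = 1$). Thus every non-zero element of $T_{\Sigma^-, L^-}$ has rank exactly one; in particular, the one-dimensional tangent space $T_{\Sigma^-,L^-}$ is spanned by a rank-one homomorphism. Since this holds at a general $L^- \in \Sigma^-$, the curve $\Sigma^-$ satisfies Definition~\ref{defn:isotropy} and is (strongly) isotropic.

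The only real subtlety is justifying that the differential of $\varpi^-$ is non-zero at a general point. This is where the hypothesis that $\Sigma^-$ is a curve (rather than a point) is essential: it guarantees that $\varpi^-$ is dominant onto a one-dimensional variety, and generic smoothness in characteristic zero then ensures that $D_L \varpi^-$ is surjective (hence non-zero) at a general $L \in \Sigma$ mapping to $\Reg(\Sigma^-)$. The rest of the argument is a direct application of the tangent-space correspondence encoded in Corollary~\ref{cor:keyLemmaTangentCorrespondence}.
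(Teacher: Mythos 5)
Your proof is correct and follows essentially the same route as the paper: both apply Corollary~\ref{cor:keyLemmaTangentCorrespondence} to $\varpi^-$, use that $\aff{L^-} = \ker\varphi_L$ kills the restriction $\varphi|_{\aff{L^-}}$, and invoke surjectivity of $D_L\varpi^-$ at a general point (the paper phrases this as bijectivity of the differential between the two curves) to conclude that every element of $T_{\Sigma^-,L^-}$ has image in $\aff{L}/\aff{L^-}$. Your explicit justification via generic smoothness in characteristic zero, and the observation that $\aff{L}/\aff{L^-}$ is one-dimensional so the spanning homomorphism has rank one, are fine and match the paper's intent.
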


\begin{proof}
Let $L \in \Sigma$ be general such that the differential $D_L \varpi^-: T_{\Sigma, L} \to T_{\Sigma^-, L^-}$ of $\varpi^-$ at $L$ is bijective. 
For each $\varphi \in T_{\Sigma, L}$, we have $\varphi |_{\aff{L^-}} = (D_L\varpi^-(\varphi) \mod \aff{L})$ by Corollary~\ref{cor:keyLemmaTangentCorrespondence}.
Since the kernel of every homomorphism in $T_{\Sigma, L}$ contains~$\aff{L^-}$, we have $(\psi \mod \aff{L}) = 0$ for every $\psi \in T_{\Sigma^-, L^-}$.
Thus, the image of each homomorphism in $T_{\Sigma^-, L^-}$ is contained in $\aff{L} / \aff{L^-}$.
\end{proof}

\begin{lem}
\label{lem:sigmaPlus}
If $\Sigma^+$ is a curve, then, for each $L \in \Reg(\Sigma)$ such that $L^+ \in \Reg(\Sigma^+)$, the kernel of every homomorphism in $T_{\Sigma^+, L^+}$ contains $\aff{L}$.
In particular, $\Sigma^+$ is isotropic.
\end{lem}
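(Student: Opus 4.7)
The plan is to dualize the argument of Lemma~\ref{lem:sigmaMinus}, using the second half of Corollary~\ref{cor:keyLemmaTangentCorrespondence} in place of the first. Since $\varpi^+(\aff{L}) = \aff{L^+}$ and $\aff{L} \subset \aff{L^+}$, the hypothesis ``$k \geq \ell$ and $\varpi(\aff{L}) \supset \aff{L}$'' is satisfied at a general $\aff{L} \in \Sigma$, and we obtain
\[
D_{\aff{L}}\varpi^+(\varphi)\big|_{\aff{L}} = (\varphi \mod \aff{L^+})
\]
for every $\varphi \in T_{\Sigma,\aff{L}}$.

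Next I would exploit the defining property of $L^+$: by construction, every $\varphi \in T_{\Sigma,\aff{L}}$ is a scalar multiple of $\varphi_L$, whose image equals $\aff{L^+}/\aff{L}$. In particular $\im\varphi \subset \aff{L^+}/\aff{L}$, so $(\varphi \mod \aff{L^+}) = 0$. Feeding this back into the identity above gives $D_{\aff{L}}\varpi^+(\varphi)\big|_{\aff{L}} = 0$, i.e.\ $\aff{L} \subset \ker D_{\aff{L}}\varpi^+(\varphi)$ for every $\varphi \in T_{\Sigma,\aff{L}}$.

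Now I would argue that at a general $\aff{L}$ with $L^+ \in \Reg(\Sigma^+)$, the differential $D_{\aff{L}}\varpi^+ : T_{\Sigma,\aff{L}} \to T_{\Sigma^+, \aff{L^+}}$ is surjective between one-dimensional spaces (here is where the assumption that $\Sigma^+$ is a curve is used), hence an isomorphism. Consequently every $\psi \in T_{\Sigma^+, \aff{L^+}} \subset \Hom(\aff{L^+}, \KK^{n+1}/\aff{L^+})$ contains $\aff{L}$ in its kernel, which is the first assertion of the lemma.

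For the isotropy statement, observe that $\aff{L^+}/\aff{L}$ is one-dimensional, so $\aff{L} \subset \ker\psi$ forces $\rank\psi \leq 1$. Since $T_{\Sigma^+,\aff{L^+}}$ is itself one-dimensional, it is spanned by a rank-one homomorphism at every smooth point $\aff{L^+}$ lying in the image of $\varpi^+$, and by Zariski closure $\PP(T_{\Sigma^+, \aff{L^+}}) \in \Sec_0(\Seg(\aff{L^+}, \KK^{n+1}/\aff{L^+}))$ for every $\aff{L^+} \in \Reg(\Sigma^+)$. No step is genuinely difficult; the only subtlety is making sure we pick $\aff{L}$ generically enough in $\Sigma$ that simultaneously $L^+ \in \Reg(\Sigma^+)$, $\varpi^+$ is defined and smooth at $\aff{L}$, and $D_{\aff{L}}\varpi^+$ is an isomorphism.
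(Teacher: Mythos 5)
Your proposal is correct and follows essentially the same route as the paper's own proof: both dualize the argument of Lemma~\ref{lem:sigmaMinus} by applying the second half of Corollary~\ref{cor:keyLemmaTangentCorrespondence} at a general $L$ where $D_L\varpi^+$ is bijective, and conclude from $\im\varphi\subset\aff{L^+}/\aff{L}$ that every $\psi\in T_{\Sigma^+,L^+}$ kills $\aff{L}$. Your added remarks on why this forces $\rank\psi\leq 1$ and on extending to all of $\Reg(\Sigma^+)$ by Zariski closure are correct elaborations of what the paper leaves implicit.
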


\begin{proof}
The proof of this statement is completely dual to the proof of Lemma~\ref{lem:sigmaMinus}.
We consider a general $L \in \Sigma$ such that the differential $D_L\varpi^+$ is bijective. 
For each $\varphi \in T_{\Sigma, L}$, we have $D_L\varpi^+(\varphi) |_{\aff{L}} = (\varphi \mod \aff{L^+})$ by Corollary~\ref{cor:keyLemmaTangentCorrespondence}.
Since the image of every homomorphism in $T_{\Sigma, L}$ is contained in $\aff{L^+}/\aff{L}$, we have that $\aff{L} \subset \ker \psi$ for every $\psi \in T_{\Sigma^+, L^+}$.
\end{proof}

For every curve $\Sigma \subset \PP^n$, we define $\Sigma^+ \subset \Gr(1, \PP^n)$ as above. 
Note that $\Sigma^+$ is the curve of tangent lines to $\Sigma$.
Dually, for a curve $\Sigma \subset \Gr(n-1, \PP^n) \isom (\PP^n)^\ast$, we define $\Sigma^- \subset \Gr(n-2, \PP^n)$.
This allows us to formulate the following:

\begin{cor}
Let $\Sigma \subset \Gr(\ell, \PP^n)$ be an irreducible isotropic curve.
If $\ell \geq 1$  and $\Sigma^-$ is a curve, then $(\Sigma^-)^+ = \Sigma.$
Dually, if $\ell \leq n-2$ and $\Sigma^+$ is a curve, then $(\Sigma^+)^- = \Sigma.$
\end{cor}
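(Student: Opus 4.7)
The plan is to establish that for a general $L \in \Sigma$, the composition $L \mapsto L^- \mapsto (L^-)^+$ returns $L$, which immediately forces $(\Sigma^-)^+ = \Sigma$ by dominance; the second statement follows by the dual argument.

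First I would choose a general $L \in \Reg(\Sigma)$ such that $L^- \in \Reg(\Sigma^-)$ and the differential $D_L \varpi^-$ is an isomorphism between the one-dimensional tangent spaces $T_{\Sigma, L}$ and $T_{\Sigma^-, L^-}$. Since $\Sigma^-$ is isotropic by Lemma~\ref{lem:sigmaMinus}, the one-dimensional space $T_{\Sigma^-, L^-}$ is spanned by some rank-one homomorphism $\psi \colon \aff{L^-} \to \KK^{n+1}/\aff{L^-}$. By the very construction of the $(\cdot)^+$ operation applied to the isotropic curve $\Sigma^-$ at its smooth point $L^-$, we have $\aff{(L^-)^+}/\aff{L^-} = \im \psi$.

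The key observation is now dimensional. Lemma~\ref{lem:sigmaMinus} tells us that $\im \psi \subset \aff{L}/\aff{L^-}$. The quotient $\aff{L}/\aff{L^-}$ has vector-space dimension $(\ell+1) - \ell = 1$, and $\im \psi$ is also one-dimensional because $\psi$ has rank one. Hence the inclusion is an equality, so $\aff{(L^-)^+} = \aff{L}$ and therefore $(L^-)^+ = L$. Consequently, the rational map $\varpi^+_{\Sigma^-} \circ \varpi^- \colon \Sigma \dashrightarrow \Gr(\ell, \PP^n)$ agrees with the identity on a dense open subset of $\Sigma$ and thus equals the identity as a rational map. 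This forces the image of $\varpi^+_{\Sigma^-}$ to contain a dense subset of $\Sigma$, and since $(\Sigma^-)^+$ is the Zariski closure of that image and both $\Sigma$ and $(\Sigma^-)^+$ are irreducible curves, we get $(\Sigma^-)^+ = \Sigma$.

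For the dual statement, with $\ell \leq n-2$ and $\Sigma^+$ a curve, I would repeat the same argument but invoke Lemma~\ref{lem:sigmaPlus} instead: at a general $L^+ \in \Reg(\Sigma^+)$ the tangent space is spanned by a rank-one $\psi$ with $\aff{L} \subset \ker \psi$; both $\aff{L}$ and $\ker \psi$ are hyperplanes in $\aff{L^+}$ (of vector-space codimension one), so equality is forced, giving $(L^+)^- = L$ on a dense subset and hence $(\Sigma^+)^- = \Sigma$. The only subtle point that might require attention is the initial genericity claim — that one can simultaneously ensure $L$ is smooth on $\Sigma$, that $L^-$ (resp.\ $L^+$) is smooth on $\Sigma^-$ (resp.\ $\Sigma^+$), and that the relevant differential is an isomorphism — but this is a routine use of dominance of $\varpi^-$ (resp.\ $\varpi^+$) onto the irreducible curve $\Sigma^-$ (resp.\ $\Sigma^+$).
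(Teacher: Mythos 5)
Your proof is correct and follows exactly the paper's route: the paper also deduces $(L^-)^+ = L$ for general $L$ from Lemma~\ref{lem:sigmaMinus} (the one-dimensionality of $\aff{L}/\aff{L^-}$ forcing the inclusion of images to be an equality), and dually for the second part via Lemma~\ref{lem:sigmaPlus}. You have merely spelled out the dimension count and the genericity bookkeeping that the paper leaves implicit.
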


\begin{proof}
The first part follows immediately from Lemma~\ref{lem:sigmaMinus}, which shows that $(L^-)^+ = L$ for a general $L \in \Sigma$.
Similarly, Lemma~\ref{lem:sigmaPlus} implies the second part.
\end{proof}

So far we have shown that we can associate a maximal sequence of isotropic curves in different Grassmannians to each isotropic curve $\Sigma \subset \Gr(\ell, \PP^n)$, where $0 \leq \ell \leq n-1$.
By this, we mean a sequence of isotropic curves
\begin{align}
\label{eq:sequenceIsotropicCurves}
\begin{array}{ccccccccc}
\big( & \Sigma_{\ell_1} , & \Sigma_{\ell_1+1} , & \ldots, &  \Sigma_\ell , & \ldots , & \Sigma_{\ell_2-1} , & \Sigma_{\ell_2} & \big),\\
&\cap & \cap &&\cap && \cap &\cap &\\
&\scriptstyle \Gr(\ell_1, \PP^n) & \scriptstyle \Gr(\ell_1+1, \PP^n) &&  \scriptstyle \Gr(\ell, \PP^n)  && \scriptstyle \Gr(\ell_2-1, \PP^n) & \scriptstyle \Gr(\ell_2, \PP^n) &
\end{array}
\end{align}
where $\Sigma_\ell = \Sigma$, and $\ell_1 \in \lbrace 0, \ldots, \ell \rbrace$ is minimal and $\ell_2 \in \lbrace \ell, \ldots, n-1 \rbrace$ is maximal such that 
$\Sigma_i^+ = \Sigma_{i+1}$ holds for all $\ell_1 \leq i < \ell_2$ and $\Sigma_j^- = \Sigma_{j-1}$ holds for all $\ell_1 < j \leq \ell_2$.

If $\ell_2 < n-1$, then $\Sigma_{\ell_2}^+$ is a point in $\Gr(\ell_2+1, \PP^n)$, i.e., an $(\ell_2+1)$-dimensional subspace $P_2 \subset \PP^n$.
For each $\ell_1 \leq i \leq \ell_2$, we have that every $L \in \Sigma_i$ is contained in $P_2$.
Hence, the whole sequence~\eqref{eq:sequenceIsotropicCurves} of isotropic curves can be embedded into Grassmannians of subspaces of $P_2$ via $\Sigma_i \hookrightarrow \Gr(i, P_2), L \mapsto L$.

Dually, if $\ell_1 > 0$, then $\Sigma_{\ell_1}^-$ is a point in $\Gr(\ell_1-1, \PP^n)$, i.e., an $(\ell_1-1)$-dimensional subspace $P_1 \subset \PP^n$.
For each $\ell_1 \leq i \leq \ell_2$, we have that every $L \in \Sigma_i$ contains $P_1$.
Thus, denoting by $\pi_{P_1}: \PP^n \dashrightarrow \PP(\KK^{n+1}/\aff{P_1}) \isom \PP^{n- \ell_1}$ the projection away from $P_1$, we can embed the whole sequence~\eqref{eq:sequenceIsotropicCurves} into smaller Grassmannians via 
$\Sigma_i \hookrightarrow \Gr(i-\ell_1, \PP^{n- \ell_1}), L \mapsto \pi_{P_1}(L)$.
We denote by $\pi_{P_1} (\Sigma_i)$ the image of $\Sigma_i$ under this embedding.

Therefore, we can always assume that $\ell_1 = 0$ and $\ell_2 = n-1$.
In this case, $\Sigma_k = \Osc_k(\Sigma_0)$ for every $0 \leq k \leq n-1$, as the following theorem shows.
Recall for this theorem that we view the empty set as a projective space with dimension~$-1$.

\begin{thm}
\label{thm:isotropicCurves}
\begin{enumerate}
\item For each irreducible curve $\Sigma_0 \subset \PP^n$ and each $0 \leq k \leq \ell_2$, we have that $\Osc_k(\Sigma_0) = \Sigma_k$.
In particular, the curve $\Osc_k(\Sigma_0)$ of osculating $k$-planes of $\Sigma_0$ is isotropic.
\item For each irreducible isotropic curve $\Sigma \subset \Gr(\ell, \PP^n)$, there is a unique subspace $P_1 \subset \PP^n$ and a unique irreducible curve $C \subset \PP^{n-\dim P_1 - 1}$ such that
$-1 \leq \dim P_1 < \ell$, every $L \in \Sigma$ contains $P_1$, and $\pi_{P_1}(\Sigma) = \Osc_{\ell-\dim P_1-1}(C)$.
\end{enumerate}
\end{thm}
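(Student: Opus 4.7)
My plan for Part 1 is induction on $k$, with base case $k=0$ trivial since $\Osc_0(\Sigma_0)=\Sigma_0$. For the inductive step, take a local lift $\gamma\colon \KK \to \KK^{n+1}$ of $\Sigma_0$. The matrix $A(t) \in \KK^{(k+1)\times(n+1)}$ whose rows are $\gamma(t), \gamma'(t), \ldots, \gamma^{(k)}(t)$ has rowspace $\aff{L_k(\gamma(t))}$, and its derivative $A'(t)$ has rows $\gamma'(t), \ldots, \gamma^{(k+1)}(t)$. Corollary~\ref{cor:differentialStiefel} then identifies the tangent direction at $L_k(\gamma(t))$ along $\Osc_k(\Sigma_0)$ with the homomorphism $\varphi_t\colon \aff{L_k(\gamma(t))} \to \KK^{n+1}/\aff{L_k(\gamma(t))}$ sending $\gamma^{(i)}(t) \mapsto \gamma^{(i+1)}(t) + \aff{L_k(\gamma(t))}$. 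This vanishes for $i<k$ and generates $\aff{L_{k+1}(\gamma(t))}/\aff{L_k(\gamma(t))}$ for $i=k$; hence $\varphi_t$ has rank one, kernel $\aff{L_{k-1}(\gamma(t))}$ (empty if $k=0$), and image $\aff{L_{k+1}(\gamma(t))}/\aff{L_k(\gamma(t))}$. This proves $L_k(\gamma(t))^+ = L_{k+1}(\gamma(t))$, hence $\Sigma_k^+ = \Osc_{k+1}(\Sigma_0)$, completing the induction and showing that each $\Osc_k(\Sigma_0)$ is isotropic.

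For Part 2, I apply the construction from the discussion preceding the theorem to $\Sigma = \Sigma_\ell$, producing the maximal sequence $\Sigma_{\ell_1}, \ldots, \Sigma_{\ell_2}$ of isotropic curves. Set $P_1 := \emptyset$ (of dimension $-1$) when $\ell_1=0$, and $P_1 := \Sigma_{\ell_1}^-$ otherwise; then $-1 \leq \dim P_1 = \ell_1-1 < \ell$, and iterating $L \supset L^-$ from $\Sigma_\ell$ down to $\Sigma_{\ell_1}$ shows $P_1 \subset L$ for every $L \in \Sigma$. Define $C := \pi_{P_1}(\Sigma_{\ell_1}) \subset \PP^{n-\ell_1}$, which is an irreducible curve since $\Sigma_{\ell_1}$ is and $\pi_{P_1}$ restricts to an isomorphism $\{L \supset P_1\} \cap \Gr(\ell_1, \PP^n) \xrightarrow{\sim} \PP^{n-\ell_1}$.

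The central observation is that, more generally, $\pi_{P_1}$ induces an isomorphism $\{L \supset P_1\} \cap \Gr(i, \PP^n) \xrightarrow{\sim} \Gr(i-\ell_1, \PP^{n-\ell_1})$ for each $i \geq \ell_1$, and its differential identifies $\{\varphi \in \Hom(\aff{L}, \KK^{n+1}/\aff{L}) : \aff{P_1} \subset \ker \varphi\}$ with $\Hom(\aff{L}/\aff{P_1}, \KK^{n+1}/\aff{L})$ while preserving ranks of homomorphisms. Therefore the $+$ and $-$ operations commute with $\pi_{P_1}$, so Part 1 applied to $C$ yields $\pi_{P_1}(\Sigma_k) = \Osc_{k-\ell_1}(C)$ for each $\ell_1 \leq k \leq \ell_2$; taking $k=\ell$ gives $\pi_{P_1}(\Sigma) = \Osc_{\ell - \dim P_1 - 1}(C)$.

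I expect the main technical obstacle to be the Pl\"ucker/Stiefel differentiation in Part 1 that pins down both the kernel and the image of $\varphi_t$; once established, everything else is bookkeeping around the $+/-$ sequence. For uniqueness of $(P_1, C)$, I will invoke the standard fact that the osculating $k$-spaces of an irreducible curve spanning $\PP^m$, for $k<m$, have empty common intersection: given any other pair $(P_1', C')$, the sum $P_1 + P_1'$ sits in every $L \in \Sigma$, so its image under $\pi_{P_1}$ lies in every osculating space of $C$, forcing $P_1' \subset P_1$ and, by symmetry, equality. Once $P_1$ is fixed, the curve $C = \pi_{P_1}(\Sigma_{\ell_1})$ is recovered by iterating $-$ on $\pi_{P_1}(\Sigma)$.
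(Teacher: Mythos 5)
Your proposal is correct and follows essentially the same route as the paper: both compute the tangent direction to $\Osc_k(\Sigma_0)$ via Corollary~\ref{cor:differentialStiefel} applied to the matrix of derivatives of a local parametrization, conclude $(\Osc_k(\Sigma_0))^{+} = \Osc_{k+1}(\Sigma_0)$ and $(\Osc_k(\Sigma_0))^{-} = \Osc_{k-1}(\Sigma_0)$, induct on $k$, and reduce Part~2 to Part~1 by projecting from the maximal common subspace $P_1$. The only notable difference is that you spell out the uniqueness of $(P_1, C)$ via the empty common intersection of osculating spaces, a point the paper leaves implicit in its remark that $P_1$ is the maximal subspace contained in every $L \in \Sigma$.
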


Note that this theorem is a reformulation of Theorem~\ref{thm:isoCurvesFULL}.

\begin{proof}[Proof of Theorem~\ref{thm:isoCurvesFULL} and Theorem~\ref{thm:isotropicCurves}]
Let us first consider the second part of Theorem~\ref{thm:isotropicCurves}.
We write $\Sigma_\ell = \Sigma$ and associate the maximal sequence~\eqref{eq:sequenceIsotropicCurves} of isotropic curves.
If $\ell_1 = 0$, we set $P_1 := \emptyset$.
Otherwise, $\ell_1 >0 $ and $P_1 := \Sigma_{\ell_1}^-$.
Note that $P_1$ is the maximal subspace of $\PP^n$ with the property that it is contained in every $L \in \Sigma$.
In any case, we define $C := \pi_{P_1}(\Sigma_{\ell_1}) \subset \PP^{n-\ell_1}$.
Now, $\pi_{P_1}(\Sigma) = \Osc_{\ell-\ell_1}(C)$ follows from the first part of Theorem~\ref{thm:isotropicCurves}.

Hence, it is enough to show the first assertion of this theorem.
For this, we consider an irreducible curve $\Sigma_0 \subset \PP^n$ and a point $p \in \Reg(\Sigma_0)$.
We choose coordinates such that $p = (1:0:\ldots:0)$.
Without loss of generality, we may work in the affine chart $\PP^n \setminus Z(x_0)$.
Since $\Sigma_0$ is smooth around $p$, there is a local analytic isomorphism $f$ from a neighborhood of the origin in $\KK^1$ to a neighborhood of the point $p$ in~$\Sigma_0$.
The map $f$ has the form $f(t) = \left( f_1(t), \ldots, f_n(t) \right)$ for some $f_1, \ldots, f_n \in \CC[\![ t ]\!]$ with $f(0) = (0, \ldots, 0 )$.
In our affine chart, the osculating $k$-plane at $f(\varepsilon)$ is the affine span of $f(\varepsilon)$, $\tfrac{\partial f}{\partial t}(\varepsilon)$, \ldots, $\tfrac{\partial^k f}{\partial^k t}(\varepsilon)$. 
We define 
\begin{align*}
A_k(\varepsilon) := \left[ \begin{array}{cccc}
1 & f_1(\varepsilon) & \cdots & f_n(\varepsilon) \\
1 & \frac{\partial f_1}{\partial t}(\varepsilon) & \cdots & \frac{\partial f_n}{\partial t}(\varepsilon) \\
\vdots & \vdots && \vdots \\
1 & \frac{\partial^k f_1}{\partial^k t}(\varepsilon) & \cdots & \frac{\partial^k f_n}{\partial^k t}(\varepsilon) 
\end{array} \right] \in \CC^{(k+1) \times (n+1)}
\end{align*}
such that the rowspace $L_k(\varepsilon) \in \Gr(k, \PP^n)$ of $A_k(\varepsilon)$ is an osculating $k$-plane of~$\Sigma_0$.
In particular, $L_k(0)$ is the osculating $k$-plane at $p$.
The tangent line at $A_k(\varepsilon)$ to the curve in $\KK^{(k+1) \times (n+1)}$ parametrized by $t \mapsto A_k(t)$ is the affine span of $A_k(\varepsilon)$ and
$$\left[\begin{array}{cccc} 0 & \frac{\partial f_1}{\partial t}(\varepsilon) & \cdots & \frac{\partial f_n}{\partial t}(\varepsilon) \\
\vdots & \vdots && \vdots \\ 
0 & \frac{\partial^{k+1} f_1}{\partial^{k+1} t}(\varepsilon) & \cdots & \frac{\partial^{k+1} f_n}{\partial^{k+1} t}(\varepsilon)  \end{array} \right].$$
Hence, by Corollary~\ref{cor:differentialStiefel}, the tangent line at $L_k(0)$ to the curve $\Osc_k(\Sigma_0)$, which is locally parametrized by $t \mapsto L_k(t)$, is spanned by 
\begin{align*}
\varphi: \aff{L_k(0)} &\longrightarrow \KK^{n+1} / \aff{L_k(0)}, \\
\left(1,f(0) \right) &\longmapsto 0 + \aff{L_k(0)}, \\
\left(1,\tfrac{\partial f}{\partial t}(0) \right) & \longmapsto 0 + \aff{L_k(0)}, \\
\vdots \\
\left(1, \tfrac{\partial^{k-1}f}{\partial^{k-1}t}(0) \right) &\longmapsto 0 + \aff{L_k(0)}, \\
\left(1, \tfrac{\partial^{k}f}{\partial^{k}t}(0) \right) &\longmapsto \left(0,\tfrac{\partial^{k+1}f}{\partial^{k+1}t}(0) \right) + \aff{L_k(0)}. 
\end{align*}
Since $p$ was an arbitrary smooth point of $\Sigma_0$, this shows that the curve $\Osc_k(\Sigma_0)$ is isotropic with $(\Osc_k(\Sigma_0))^+=\Osc_{k+1}(\Sigma_0)$ and $(\Osc_k(\Sigma_0))^-=\Osc_{k-1}(\Sigma_0)$.
Now the assertion $\Osc_k(\Sigma_0) = \Sigma_k$ follows by induction on $k$. It is trivial for $k=0$.
For $k >0$, the induction hypothesis yields $(\Osc_k(\Sigma_0))^- = \Osc_{k-1}(\Sigma_0) = \Sigma_{k-1}$, which then implies $\Osc_k(\Sigma_0) = ((\Osc_k(\Sigma_0))^-)^+ = \Sigma_{k-1}^+ = \Sigma_k$.
\end{proof}

\begin{example}
\label{ex:developableSurfacesAreIsotropicCurves}
Let $\Sigma \subset \Gr(1, \PP^3)$ be an irreducible isotropic curve and let $\mathcal{S}_\Sigma$ in  $\PP^3$ be the union of all lines on $\Sigma$.
If $\Sigma^- \subset \PP^3$ is a nondegenerate curve, the surface $\mathcal{S}_\Sigma$ is the tangent developable of $\Sigma^-$.
The projectively dual variety $\mathcal{S}_\Sigma^\vee$ is the dual curve $\Osc_2(\Sigma^-) = \Sigma^+$ of $\Sigma^-$ (see~\cite[page 111]{pieneCuspProj}).
The ruled surface $\mathcal{S}_\Sigma$ is cuspidal along $\Sigma^-$, and $\Sigma^-$ is classically known as its \emph{edge of regression}. We summarize these duality relations in Figure~\ref{fig:dualitySpaceCurves}, where $E(S) \subset \PP^3$ denotes the edge of regression of a developable surface $S \subset \PP^3 $. 

The situation degenerates when $\Sigma^- \subset \PP^3$ is a plane curve. In this case $\mathcal{S}_\Sigma$ is the plane containing $\Sigma^-$, and $(\Sigma^-)^\vee$ is a cone with vertex $\mathcal{S}_\Sigma^\vee = \Sigma^+$.
Dually, if $\Sigma^-$ is a point and $\Sigma^+$ is a plane curve, then $\mathcal{S}_\Sigma$ is a cone with vertex $\Sigma^-$. Finally, if both $\Sigma^-$ and $\Sigma^+$ are points, then $\mathcal{S}_\Sigma$ is the plane $(\Sigma^+)^\vee$.

In any case, the ruled surface $\mathcal{S}_\Sigma$ can be either a plane or the dual of a curve. 
So shortly put, the curve $\Sigma \subset \Gr(1, \PP^3)$ is isotropic if and only if the surface $\mathcal{S}_\Sigma$ ruled by the lines on $\Sigma$ is developable.
$\hfill\diamondsuit$
\end{example}

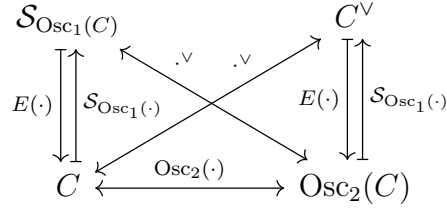
\begin{figure}
\centering
\begin{tikzcd}[column sep = huge, row sep = huge]
\mathcal{S}_{\Osc_1(C)}
\arrow[dr, leftrightarrow, "\cdot^\vee" near start]
\arrow[d, maps to, shift right=1mm, "E(\cdot)" left] & 
C^\vee 
\arrow[dl, leftrightarrow, "\cdot^\vee\hspace*{-1mm}" {swap, pos = 0.30}]
\arrow[d, maps to, shift right=1mm, "E(\cdot)" left] \\
C
\arrow[u, maps to, shift right=1mm, "\mathcal{S}_{\Osc_1(\cdot)}" right]
\arrow[r, leftrightarrow, "\Osc_2(\cdot)"] &
\Osc_2(C)
\arrow[u, maps to, shift right=1mm, "\mathcal{S}_{\Osc_1(\cdot)}" right]
\end{tikzcd}
\caption{Duality relations of a nondegenerate curve $C \subset \PP^3$.}
\label{fig:dualitySpaceCurves}
\end{figure}

\subsection{Strongly Isotropic Varieties}
\label{ssec:stronglyIsotropic}

In this subsection, we show that each strongly isotropic variety is either a curve or a subvariety of an $\alpha$- or $\beta$-\emph{variety}.

\begin{defn}
For $1 \leq \ell \leq n$ and $P_1 \in \Gr(\ell-1, \PP^n)$, we call 
$$\alpha(P_1) := \lbrace L \in \Gr(\ell, \PP^n) \mid P_1 \subset L \rbrace$$
the \emph{$\alpha$-variety} of $P_1$.
Analogously, for $0 \leq \ell \leq n-2$ and $P_2 \in \Gr(\ell+1, \PP^n)$, the \emph{$\beta$-variety} of $P_2$ is 
$$\beta(P_2) := \lbrace L \in \Gr(\ell,\PP^n) \mid L \subset P_2 \rbrace.$$
\end{defn}

Note that $\alpha(P_1)$ is isomorphic to $\Gr(0, \PP^{n-\ell}) = \PP^{n-\ell}$, and that
$\beta(P_2)$ is isomorphic to $\Gr(\ell, \PP^{\ell+1}) \isom (\PP^{\ell+1})^\ast$.

\begin{rem}
\label{rem:tangentSpacesAlphaBeta}
The tangent space of the $\alpha$-variety $\alpha(P_1)$ at a point $L$ is the $\alpha$-space $E_\alpha(\aff{P_1}) \subset \Hom(\aff{L}, \KK^{n+1}/\aff{L})$ (see Definition~\ref{def:alphaBetaSpace}).
Dually, the tangent space of the $\beta$-variety $\beta(P_2)$ at a point $L$ is the $\beta$-space $E_\beta (\aff{P_2} / \aff{L}) \subset \Hom(\aff{L}, \KK^{n+1}/\aff{L})$.
Conversely, given any $L \in \Gr(\ell, \PP^n)$ and an $\alpha$-space in $T_{\Gr(\ell, \PP^n),L}$, there is a unique $\alpha$-variety containing $L$ whose tangent space at $L$ is the given $\alpha$-space. The analogous assertion holds for $\beta$-spaces and $\beta$-varieties. 
$\hfill\diamondsuit$
\end{rem}

These observations show that subvarieties of $\alpha$- and $\beta$-varieties are strongly isotropic.
In fact, we can even show that all strongly isotropic varieties either are such subvarieties or have dimension one.
Note that we have seen in Subsection~\ref{ssec:isotropicCurves} that (strongly) isotropic curves are not necessarily contained in $\alpha$- or $\beta$-varieties.
The curve $\Osc_1(C) \subset \Gr(1, \PP^3)$ of tangent lines to a nondegenerate irreducible curve $C \subset \PP^3$ is such an example.

\begin{thm}
\label{thm:stronglyIsotropic}
\begin{enumerate}
\item Every subvariety of an $\alpha$- or $\beta$-variety is strongly isotropic.
\item Every irreducible strongly isotropic variety of dimension at least two is either a subvariety of a unique $\alpha$-variety or a subvariety of a unique $\beta$-variety.
\end{enumerate}
\end{thm}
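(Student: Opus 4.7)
Part~(1) is immediate from Remark~\ref{rem:tangentSpacesAlphaBeta}: the tangent space of $\alpha(P_1)$ at any point equals the $\alpha$-space $E_\alpha(\aff{P_1})$, which consists entirely of rank-$\leq 1$ homomorphisms, and similarly for $\beta(P_2)$. Hence any subvariety inherits a tangent space inside a Segre variety and is strongly isotropic.

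For part~(2), let $\Sigma \subset \Gr(\ell,\PP^n)$ be irreducible and strongly isotropic with $\dim \Sigma \geq 2$. At each $L \in \Reg(\Sigma)$, $T_{\Sigma,L}$ is a linear space of dimension $\geq 2$ consisting of rank-$\leq 1$ homomorphisms, so by Lemma~\ref{lem:alphaBetaSegre} it lies in a unique $\alpha$- or unique $\beta$-space; by irreducibility of $\Sigma$ the type is constant on a dense open set. In the $\alpha$-type case the hyperplane $\aff{P_L} \subset \aff{L}$ is uniquely determined at each smooth $L$, yielding a rational map $\varpi \colon \Sigma \dashrightarrow \Gr(\ell-1, \PP^n)$, $L \mapsto P_L$; the main task is to show $\varpi$ is constant. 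Corollary~\ref{cor:keyLemmaTangentCorrespondence} applied to $\varpi$ gives $\varphi|_{\aff{P_L}} = D_L\varpi(\varphi) \bmod \aff{L}$ for $\varphi \in T_{\Sigma, L}$. Strong isotropy makes the left side vanish, so $\im D_L\varpi(\varphi) \subset \aff{L}/\aff{P_L}$, a one-dimensional subspace of $\KK^{n+1}/\aff{P_L}$. This restricts but does not immediately annihilate $D_L\varpi$.

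To finish, I would use a local Stiefel parametrization $A(s) = [I_{\ell+1} \mid B(s)]$ of a neighborhood of $L_0 \in \Reg(\Sigma)$ with coordinates arranged so that $\aff{P_{L_0}} = \spann(e_0, \ldots, e_{\ell-1})$, and parametrize $P_{L_s}$ as the rowspace of $c(s) A(s)$ for an auxiliary $\ell\times(\ell+1)$ matrix $c(s)$ with $c(0) = [I_\ell \mid 0]$. Strong isotropy at $L_0$, read off via Corollary~\ref{cor:differentialStiefel}, forces $\partial B_j / \partial s_k(0) = 0$ for $j < \ell$; strong isotropy at nearby $L_s$ gives the identity $\sum_j c_{ij}(s)\, \partial B_j/\partial s_k(s) = 0$ for $i<\ell$. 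Differentiating this identity at $s = 0$ with respect to $s_m$ and using Schwarz's symmetry of the second derivatives of $B$ produces the relation
\[
\frac{\partial c_{i\ell}}{\partial s_m}(0)\cdot v_k \;=\; \frac{\partial c_{i\ell}}{\partial s_k}(0)\cdot v_m,\qquad v_k := \frac{\partial B_\ell}{\partial s_k}(0).
\]
Under Corollary~\ref{cor:differentialStiefel} the vectors $v_k$ parametrize a basis of $T_{\Sigma, L_0}$, so the hypothesis $\dim \Sigma \geq 2$ forces them to be linearly independent, which in turn forces $\partial c_{i\ell}/\partial s_m(0) = 0$ for all $i, m$. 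Plugging this into Corollary~\ref{cor:differentialStiefel} applied to $C(s) := c(s)A(s)$ shows $D_{L_0}\varpi = 0$; irreducibility then gives that $\varpi$ is constant and $\Sigma \subset \alpha(P_0)$.

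The $\beta$-type case reduces to the $\alpha$-type case via Remark/Definition~\ref{rem:dualityConormalSpaces}: the isomorphism $L \mapsto L^\vee$ has the rank-preserving differential $\varphi \mapsto \varphi^*$ that swaps $\alpha$- and $\beta$-spaces, so $\Sigma^\perp$ is strongly isotropic of $\alpha$-type and lies in a unique $\alpha$-variety that dualizes to a unique $\beta$-variety containing $\Sigma$. Uniqueness of the enveloping $\alpha$- or $\beta$-variety is a short dimension check: two distinct $\alpha$-varieties, and two distinct $\beta$-varieties, meet in at most a point, and $\alpha(P_1) \cap \beta(P_2)$ has dimension at most one, so $\dim \Sigma \geq 2$ excludes repeated or simultaneous containments. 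The main obstacle is the Stiefel computation of the third paragraph: simultaneously tracking derivatives of $A(s)$ and $c(s)$ and extracting from Schwarz's symmetry a dichotomy whose failing branch is eliminated precisely by $\dim \Sigma \geq 2$ requires careful bookkeeping, and I do not see a coordinate-free shortcut.
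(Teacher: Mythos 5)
Your proposal is correct, and for the heart of the matter (part 2, $\alpha$-type case) it takes a genuinely different route from the paper. Both arguments start the same way: the unique kernel hyperplane $P_L$ gives a rational map $L \mapsto P_L$ into $\Gr(\ell-1,\PP^n)$, and Corollary~\ref{cor:keyLemmaTangentCorrespondence} shows the image of its differential consists of maps with image in $\aff{L}/\aff{P_L}$. From there the paper argues globally: it splits on whether the general fiber of $L \mapsto P_L$ is a single point (if not, two points $L_1 \neq L_2$ in a fiber force $\im \phi \subset \aff{L_1}/\aff{P} \cap \aff{L_2}/\aff{P} = 0$ and the image is a point at once), and in the remaining case it sweeps out the variety $X_{\ker} \subset \PP^n$ covered by the planes $P$, computes $T_{X_{\ker},x} = \lbrace \phi \mid \im\phi \subset \aff{L}/\aff{x}\rbrace$ via the incidence correspondence, and derives a contradiction from the fact that a positive-dimensional fiber over $x$ would force two distinct $P$'s to determine the same $L$. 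You instead prove directly that the differential of $L \mapsto P_L$ vanishes at a general point, by a second-order jet computation in Stiefel coordinates: differentiating the first-order kernel condition $\sum_j c_{ij}\,\partial B_j/\partial s_k = 0$ and invoking Schwarz symmetry yields the cross-relation $\partial_{s_m}c_{i\ell}(0)\,v_k = \partial_{s_k}c_{i\ell}(0)\,v_m$, and the existence of two independent tangent directions ($\dim\Sigma \geq 2$) kills the coefficients. I checked the computation and it is sound (your $v_k$ are independent because the parametrization is immersive; $\dim\Sigma\geq 2$ only supplies a second index, which is exactly where the hypothesis enters). Your approach is more computational but entirely self-contained and local, in the spirit of classical moving-frame proofs that developable ruled surfaces are cones or tangent developables; the paper's is coordinate-free but needs the auxiliary swept variety $X_{\ker}$, the dimension bound of Lemma~\ref{lem:dimStronglyIso}, and a case split. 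Your duality reduction for the $\beta$-type case and the uniqueness check at the end agree with the paper (Lemma~\ref{lem:alphaBetaDualityIso}); the paper leaves the uniqueness dimension count implicit, which you spell out.
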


This theorem is a more specific version of Theorem~\ref{thm:stronglyIsoFULL}.
As in the classification of strongly coisotropic varieties in Subsection~\ref{ssec:stronglyCoisotropic}, we use Lemma~\ref{lem:alphaBetaSegre} two distinguish two types of strongly isotropic varieties.

\begin{defCor}
Let $\Sigma$ be an irreducible strongly isotropic variety of dimension at least two.
Either each tangent space at a smooth point of $\Sigma$ is contained in a unique $\alpha$-space, or each tangent space at a smooth point of $\Sigma$ is contained in a unique $\beta$-space.
In the first case, we call $\Sigma$ \emph{strongly isotropic of $\alpha$-type}. 
In the latter case, we say that $\Sigma$ is \emph{strongly isotropic of $\beta$-type}. 
\end{defCor}

\begin{lem}
\label{lem:dimStronglyIso}
Every strongly isotropic variety in $\Gr(\ell, \PP^n)$ of $\alpha$-type has dimension at most $n-\ell$, 
and every strongly isotropic variety of $\beta$-type has dimension at most $\ell+1$.
\end{lem}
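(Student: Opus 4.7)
The plan is to reduce the lemma to a direct dimension count for $\alpha$-spaces and $\beta$-spaces inside $\Hom(\aff{L},\KK^{n+1}/\aff{L})$, using the fact that at a smooth point $L\in\Reg(\Sigma)$ we have $\dim T_{\Sigma,L}=\dim \Sigma$.

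First I would pick $L\in\Reg(\Sigma)$ and invoke the definitions of $\alpha$- and $\beta$-type given just before the lemma: in the $\alpha$-type case there is a (unique) linear hyperplane $u\subset\aff{L}$ with $T_{\Sigma,L}\subset E_\alpha(u)$, and in the $\beta$-type case there is a (unique) one-dimensional subspace $w\subset \KK^{n+1}/\aff{L}$ with $T_{\Sigma,L}\subset E_\beta(w)$. So the bound on $\dim\Sigma$ reduces to bounding $\dim E_\alpha(u)$ and $\dim E_\beta(w)$.

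For the $\alpha$-type bound, by Definition~\ref{def:alphaBetaSpace} a homomorphism $\varphi\in E_\alpha(u)$ is one that factors through the one-dimensional quotient $\aff{L}/u$, so $E_\alpha(u)\cong\Hom(\aff{L}/u,\KK^{n+1}/\aff{L})$ has dimension $1\cdot(n-\ell)=n-\ell$. Dually, $\varphi\in E_\beta(w)$ lands in the one-dimensional subspace $w$, giving $E_\beta(w)\cong\Hom(\aff{L},w)$, of dimension $(\ell+1)\cdot 1=\ell+1$. Combining this with $\dim\Sigma=\dim T_{\Sigma,L}\le \dim E_\alpha(u)$ in the $\alpha$-case, and $\dim\Sigma\le\dim E_\beta(w)$ in the $\beta$-case, finishes the proof.

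There is no real obstacle here: the only subtlety is making sure the chosen $L$ lies in the smooth locus so that $\dim T_{\Sigma,L}$ equals the dimension of $\Sigma$, which is guaranteed because the definition of strongly isotropic of $\alpha$- or $\beta$-type quantifies over $L\in\Reg(\Sigma)$ and such points exist since $\Sigma$ is a variety. Thus the lemma follows from two elementary $\Hom$-space dimension computations.
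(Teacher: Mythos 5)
Your proposal is correct and is essentially the paper's own argument: the paper's one-line proof likewise reduces the bound to the facts that $\alpha$-spaces in $\Hom(\aff{L},\KK^{n+1}/\aff{L})$ have dimension $n-\ell$ and $\beta$-spaces have dimension $\ell+1$, which you verify via the same $\Hom$-space identifications. Your write-up just makes the containment $T_{\Sigma,L}\subset E_\alpha(u)$ (resp.\ $E_\beta(w)$) and the smoothness point explicit, which the paper leaves implicit.
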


\begin{proof}
This follows from the fact that $\alpha$-spaces in $\Hom(\aff{L},\KK^{n+1}/\aff{L})$ have dimension $n-\ell$ and that the dimension of $\beta$-spaces is $\ell+1$, where $L \in \Gr(\ell, \PP^n)$.
\end{proof}

\begin{lem}
\label{lem:alphaBetaDualityIso}
\begin{enumerate}
\item A subvariety $\Sigma \subset \Gr(\ell,\PP^n)$ is strongly isotropic of $\alpha$-type if and only if $\Sigma^\perp \subset \Gr(n-\ell-1, (\PP^n)^\ast)$ is strongly isotropic of $\beta$-type.
\item Moreover, $\Sigma$ is contained in the $\alpha$-variety $\alpha(P_1)$ if and only if $\Sigma^\perp$ is contained in the $\beta$-variety $\beta(P_1^\vee)$.
\end{enumerate}
\end{lem}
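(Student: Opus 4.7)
The plan is to derive both parts directly from the Grassmannian duality isomorphism $L\mapsto L^\vee$ spelled out in Remark/Definition~\ref{rem:dualityConormalSpaces}, together with two standard facts: that the differential of this isomorphism at $L$ is $\varphi\mapsto\varphi^{\ast}$, and that on Hom-spaces the transpose exchanges $\alpha$- and $\beta$-spaces, namely $E_\alpha(u)\mapsto E_\beta((U/u)^\ast)$ and $E_\beta(w)\mapsto E_\alpha((W/w)^\ast)$ (both recalled in the Segre-variety subsection).

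For part~(1), I would first note that $L\mapsto L^\vee$ is an isomorphism of smooth varieties, so it identifies $\Reg(\Sigma)$ with $\Reg(\Sigma^\perp)$, and its differential at $\aff{L}$ carries $T_{\Sigma,\aff{L}}\subset\Hom(\aff{L},\KK^{n+1}/\aff{L})$ isomorphically onto $T_{\Sigma^\perp,(\KK^{n+1}/\aff{L})^\ast}\subset\Hom((\KK^{n+1}/\aff{L})^\ast,\aff{L}^\ast)$ via $\varphi\mapsto\varphi^\ast$. Suppose $\Sigma$ is strongly isotropic of $\alpha$-type, so at a smooth point $\aff{L}$ the tangent space lies in some $E_\alpha(\aff{P_1})$ with $\aff{P_1}$ a hyperplane in $\aff{L}$. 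Applying the transpose identity, the tangent space of $\Sigma^\perp$ at $L^\vee$ is contained in $E_\beta((\aff{L}/\aff{P_1})^\ast)$, a $\beta$-space. Hence $\Sigma^\perp$ is strongly isotropic of $\beta$-type. The converse is symmetric (apply the same argument to $\Sigma^\perp$ and use biduality $(\Sigma^\perp)^\perp=\Sigma$).

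For part~(2), I would use the elementary fact that for linear subspaces $\aff{A},\aff{B}\subset\KK^{n+1}$ one has $\aff{A}\subset\aff{B}$ if and only if $\operatorname{Ann}(\aff{B})\subset\operatorname{Ann}(\aff{A})$. Taking $\aff{A}=\aff{P_1}$ and $\aff{B}=\aff{L}$ this translates to
\[
L\in\alpha(P_1)\;\Longleftrightarrow\;\aff{P_1}\subset\aff{L}\;\Longleftrightarrow\;\operatorname{Ann}(\aff{L})\subset\operatorname{Ann}(\aff{P_1})\;\Longleftrightarrow\;L^\vee\subset P_1^\vee\;\Longleftrightarrow\;L^\vee\in\beta(P_1^\vee).
\]
Quantifying over all $L\in\Sigma$ (and noting that $L\mapsto L^\vee$ maps $\Sigma$ onto $\Sigma^\perp$) yields $\Sigma\subset\alpha(P_1)$ iff $\Sigma^\perp\subset\beta(P_1^\vee)$. (In passing, this tangent-space-level calculation is consistent with Remark~\ref{rem:tangentSpacesAlphaBeta}, which identifies the tangent spaces of $\alpha(P_1)$ and $\beta(P_1^\vee)$ as $E_\alpha(\aff{P_1})$ and $E_\beta((\aff{L}/\aff{P_1})^\ast)$ respectively.)

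There is no real obstacle here: the lemma is a bookkeeping consequence of the two duality dictionaries already set up in Section~\ref{sec:prelim}. The only point worth double-checking is the dimension matching ($P_1\in\Gr(\ell-1,\PP^n)$ corresponds to $P_1^\vee\in\Gr(n-\ell,(\PP^n)^\ast)$, inside which $L^\vee\in\Gr(n-\ell-1,(\PP^n)^\ast)$ sits as a hyperplane), but this is automatic from the formula $\dim P_1+\dim P_1^\vee=n-1$.
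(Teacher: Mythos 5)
Your proposal is correct and follows essentially the same route as the paper: the paper's proof likewise reduces part~(1) to the fact that the differential of $L\mapsto L^\vee$ is the transpose $\varphi\mapsto\varphi^\ast$, which exchanges kernel conditions with image conditions (equivalently, $\alpha$-spaces with $\beta$-spaces), and treats part~(2) as immediate from annihilator reversal. You merely spell out a few steps the paper leaves implicit.
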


\begin{proof}
First, we notice that $L$ is a smooth point of $\Sigma$ if and only if $L^\vee$ is a smooth point of $\Sigma^\perp$.
Secondly, a linear subspace $U \subset \aff{L}$ is contained in the kernel of $\varphi \in T_{\Sigma,L}$ if and only if the image of $\varphi^\ast \in T_{\Sigma^\perp,L^\vee}$ is contained in $(\aff{L}/U)^\ast$. This shows the first part of Lemma~\ref{lem:alphaBetaDualityIso}.
The second part is immediate.
\end{proof}

Thus, to prove Theorem~\ref{thm:stronglyIsotropic}, we only have to consider strongly isotropic varieties of $\alpha$-type.
To every strongly isotropic variety $\Sigma \subset \Gr(\ell,\PP^n)$ of $\alpha$-type, we can associate a variety $\Sigma_{\ker} \subset \Gr(\ell-1, \PP^n)$ as follows.
For every smooth point $L$ of $\Sigma$, there is a unique hyperplane $P_L \subset L$ such that every tangent vector $\varphi \in T_{\Sigma,L}$ satisfies $\aff{P_L} \subset \ker \varphi$.
Hence, we get a rational map
\begin{align}
\label{eq:PsiKer}
\begin{split}
\Psi_{\ker}: \Sigma &\longdashrightarrow \Gr(\ell-1, \PP^n), \\
\Reg(\Sigma) \ni L &\longmapsto P_L,
\end{split}
\end{align}
and we define $\Sigma_{\ker}$ to be the Zariski closure of its image.
Since $\Sigma$ is irreducible, so is $\Sigma_{\ker}$.
We will prove Theorem~\ref{thm:stronglyIsotropic} by showing that $\Sigma_{\ker}$ is a point.

\begin{lem}
\label{lem:SigmaKer}
Let $\Sigma \subset \Gr(\ell, \PP^n)$ be a strongly isotropic variety of $\alpha$-type.
\begin{enumerate}
\item For a general point $L$ of $\Sigma$, the image of every tangent vector $\phi \in T_{\Sigma_{\ker}, P_L}$ is contained in $\aff{L}/\aff{P_L}$.
%More specifically, for every smooth point $L=\PP(U)$ of $\Sigma$ such that $\PP(\mathcal{U}_L) \in \Reg(\Sigma_{\ker})$ we have that the image of every tangent vector $\phi \in \Hom(\mathcal{U}_L, V/\mathcal{U}_L)$ of $\Sigma_{\ker}$ at $\PP(\mathcal{U}_L)$ is contained in $U/\mathcal{U}_L$.
\item If the general fiber of $\Psi_{\ker}$ contains exactly one point, then $\Sigma_{\ker}$ is strongly isotropic of $\beta$-type.
Otherwise, $\Sigma_{\ker}$ must be a point.
%Besides, the dimension of $\Sigma_{\ker}$ is at most $\dim(\Sigma)$, and
%if $\dim(\Sigma_{\ker}) < \dim(\Sigma)$, then $\Sigma_{\ker}$ must be a point.
\end{enumerate}
\end{lem}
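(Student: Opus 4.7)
For part (1), my plan is to apply Corollary~\ref{cor:keyLemmaTangentCorrespondence} to the rational map $\Psi_{\ker}: \Sigma \dashrightarrow \Sigma_{\ker} \subset \Gr(\ell-1, \PP^n)$, which satisfies $\aff{P_L} \subset \aff{L}$ by construction. The Corollary will give, for a general $L \in \Sigma$ and every $\varphi \in T_{\Sigma,L}$, the identity $\varphi|_{\aff{P_L}} = (D_L \Psi_{\ker}(\varphi) \mod \aff{L})$. Since $P_L$ is defined as the common kernel hyperplane of all tangent vectors at $L$, the left-hand side vanishes, so the image of $D_L \Psi_{\ker}(\varphi)$ must lie in $\aff{L}/\aff{P_L}$. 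Dominance of $\Psi_{\ker}$ onto $\Sigma_{\ker}$ means that its differential is surjective at a general $L$, which transfers the image constraint to every $\phi \in T_{\Sigma_{\ker}, P_L}$ as desired.

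For part (2), I would promote the observation of part (1) to the stronger inclusion $T_{\Sigma_{\ker}, P_L} \subset E_\beta(\aff{L}/\aff{P_L})$, using that $\aff{L}/\aff{P_L}$ is a one-dimensional subspace of $\KK^{n+1}/\aff{P_L}$ (because $\dim L = \dim P_L + 1$). If the general fiber of $\Psi_{\ker}$ consists of a single point, then $\dim \Sigma_{\ker} = \dim \Sigma \geq 2$, and this one containment already exhibits $\Sigma_{\ker}$ as strongly isotropic of $\beta$-type (Lemma~\ref{lem:alphaBetaSegre} takes care of the uniqueness of the $\beta$-space).

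The main obstacle, and the interesting case, is when the general fiber of $\Psi_{\ker}$ contains more than one point. The key observation I would exploit is that $E_\beta(w_1) \cap E_\beta(w_2) = \{0\}$ for any two distinct one-dimensional subspaces $w_1, w_2 \subset \KK^{n+1}/\aff{P}$. At a general $P \in \Sigma_{\ker}$ I would pick two distinct $L, L' \in \Psi_{\ker}^{-1}(P)$; since both contain $P$ but differ, the lines $\aff{L}/\aff{P}$ and $\aff{L'}/\aff{P}$ in $\KK^{n+1}/\aff{P}$ are distinct. Part (1) then forces $T_{\Sigma_{\ker}, P}$ to lie in both corresponding $\beta$-spaces simultaneously, so it must vanish, and irreducibility of $\Sigma_{\ker}$ then forces it to be a single point. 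The one technicality I expect to address is verifying that a general fiber meets the dense open locus in $\Sigma$ where part (1) applies in at least two points, which should follow from standard upper semicontinuity of fiber dimensions for the dominant map $\Psi_{\ker}$.
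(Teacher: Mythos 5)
Your proposal is correct and follows essentially the same route as the paper: part (1) via Corollary~\ref{cor:keyLemmaTangentCorrespondence} applied to $\Psi_{\ker}$ together with surjectivity of its differential at a general point, and part (2) by observing that two distinct $L_1,L_2$ in a general fiber force $\im\phi \subset \aff{L_1}/\aff{P} \cap \aff{L_2}/\aff{P} = \lbrace 0 \rbrace$, hence $\dim\Sigma_{\ker}=0$. The only difference is that you explicitly flag the general-position technicality of finding two fiber points in the good locus, which the paper's proof passes over silently.
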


\begin{proof}
For a general point $L \in \Sigma$, the differential 
\begin{align*}
D_L \Psi_{\ker} : T_{\Sigma,L} \longrightarrow T_{\Sigma_{\ker},P_L}
\end{align*}
of $\Psi_{\ker}$ at $L$ is a surjection.
By Corollary~\ref{cor:keyLemmaTangentCorrespondence}, it sends a map $\varphi: \aff{L} \to \KK^{n+1}/\aff{L}$ in $T_{\Sigma,L}$ to a linear map $\phi: \aff{P_L} \to \KK^{n+1}/\aff{P_L}$
such that $\varphi |_{\aff{P_L}} = (\phi \mod \aff{L})$.
But since $\varphi |_{\aff{P_L}}$ is the zero-map, the image of $\phi$ must be contained in $\aff{L}/\aff{P_L}$.
In particular, the rank of $\phi$ is at most one.
Since $D_L \Psi_{\ker}$ is surjective, every $\phi \in T_{\Sigma_{\ker},P_L}$ has at most rank one and satisfies $\im \phi \subset \aff{L}/\aff{P_L}$.
This shows that $\Sigma_{\ker}$ is strongly isotropic of $\beta$-type if $\dim \Sigma_{\ker} \geq 2$.
Now we have shown the first assertion of Lemma~\ref{lem:SigmaKer} as well as the first part of the second assertion.

Finally, we assume that the general fiber of $\Psi_{\ker}$ contains more than one point.
We consider a general point $P \in \Sigma_{\ker}$ and two general points $L_1, L_2 \in \Psi_{\ker}^{-1} (P)$ in its fiber.
Both differentials $D_{L_1} \Psi_{\ker}$ and $D_{L_2} \Psi_{\ker}$ are surjective, and the image of every $\phi \in T_{\Sigma_{\ker},P}$ is contained in $\aff{L_1} / \aff{P} \cap \aff{L_2} / \aff{P} = \lbrace 0 \rbrace$.
Thus, the dimension of $\Sigma_{\ker}$ must be zero.
\end{proof}

\begin{proof}[Proof of Theorem~\ref{thm:stronglyIsoFULL} and Theorem \ref{thm:stronglyIsotropic}]
The first part follows immediately from Remark~\ref{rem:tangentSpacesAlphaBeta}.
For the second part, let $\Sigma \subset \Gr(\ell, \PP^n)$ be an irreducible strongly isotropic variety of dimension at least two.
Due to Lemma \ref{lem:alphaBetaDualityIso}, we can assume that $\Sigma$ is of $\alpha$-type.
Thus, we have a dominant rational map $\Psi_{\ker}: \Sigma \dashrightarrow \Sigma_{\ker}$ as in~\eqref{eq:PsiKer}.
Moreover, we assume for contradiction that $\Sigma_{\ker} \subset \Gr(\ell-1, \PP^n)$ is not a point.
By Lemma~\ref{lem:SigmaKer}, the general fiber of $\Psi_{\ker}$ consists of exactly one point and $\Sigma_{\ker}$ is strongly isotropic of $\beta$-type.
In particular, $\dim \Sigma_{\ker}  = \dim \Sigma$.

We denote by $X_{\ker} \subset \PP^n$ the variety swept out by all $P \in \Sigma_{\ker}$.
It is the image of the incidence correspondence $F_{\ker} := \lbrace (x,P) \in \PP^n \times \Sigma_{\ker} \mid x \in P \rbrace$ under the projection $\pi$ onto the first factor; 
so $\dim X_{\ker} \leq \dim F_{\ker} = \dim \Sigma + \ell-1 \leq n-1$ by Lemma~\ref{lem:dimStronglyIso}.
At a smooth point $(x,P) \in F_{\ker}$ such that $P \in \Reg(\Sigma_{\ker})$ the incidence correspondence has the tangent space
\begin{align}
\label{eq:tangentSpaceIncidenceSigma}
T_{F_{\ker}, (x,P)} = \left\lbrace (\phi, \varphi) \in \Hom(\aff{x}, \KK^{n+1}/\aff{x}) \times T_{\Sigma_{\ker},P} \mid \varphi |_{\aff{x}} = (\phi \mod \aff{P}) \right\rbrace,
\end{align}
since the inclusion ``$\subset$'' follows from Lemma~\ref{lem:tangentSpaceIncidenceGrassmannian} and both linear spaces in~\eqref{eq:tangentSpaceIncidenceSigma} have the same dimension.

%We distinguish two cases:
%\begin{enumerate}
%\item
Let $(x,P) \in F_{\ker}$ be general.
As the differential $D_{(x,P)}\pi: T_{F_{\ker},(x,P)} \to T_{X_{\ker},x}$ of $\pi$ at $(x,P)$ is surjective, we see from~\eqref{eq:tangentSpaceIncidenceSigma} that
$$T_{X_{\ker},x} = \left\lbrace \phi \in \Hom( \aff{x} , \KK^{n+1}/\aff{x}) \mid \exists \varphi \in T_{\Sigma_{\ker},P} : \varphi |_{\aff{x}} = (\phi \mod \aff{P})  \right\rbrace.$$
We denote by $L \in \Sigma$ the unique point in the fiber $\Psi_{\ker}^{-1}(P)$.
By Lemma~\ref{lem:SigmaKer}, the space $T_{X_{\ker},x}$ is a subset of $\mathrm{H}_{x,L} := \lbrace \phi \in \Hom( \aff{x}, \KK^{n+1}/\aff{x}) \mid \im \phi \subset \aff{L} / \aff{x} \rbrace$.
The dimension of $\mathrm{H}_{x,L}$ is $\ell$, which implies that the dimension of $X_{\ker}$ is at most $\ell$.
Since we assumed that $\Sigma_{\ker} \subset \Gr(\ell-1, \PP^n)$ is not a point, the dimension of $X_{\ker}$ must be exactly $\ell$, so $T_{X_{\ker},x} = \mathrm{H}_{x,L}$.
Moreover, the dimension of the fiber $\pi^{-1}(x)$ is $\dim F_{\ker}-\dim X_{\ker} = \dim \Sigma - 1 \geq 1$.
We pick another general point $(x,P')$ in this fiber and repeat the above argument.
We see that the unique point $L' \in \Sigma$ in the fiber $\Psi_{\ker}^{-1}(P')$ also satisfies $T_{X_{\ker},x} = \mathrm{H}_{x,L'}$.
In particular, we have $\mathrm{H}_{x,L}=\mathrm{H}_{x,L'}$, so $L= L'$ and $P=P'$, but this is a contradiction since $(x,P)$ and $(x,P')$ were generally chosen on the variety $\pi^{-1}(x)$ of dimension at least one.

%\item $\dim X_{\ker}  < \ell-1 + \dim \Sigma$:
%By Lemma \ref{lem:union}, the degree of $X_{\ker}$ is one.
%Hence, every $P \in \Sigma_{\ker}$ is contained in in the projective subspace $X_{\ker}$.
%In particular, the image of every $\varphi \in T_{\Sigma_{\ker},P}$ for a smooth point $P$ is contained in $\aff{X_{\ker}}/\aff{P}$.
%Thus, by Lemma \ref{lem:SigmaKer}, we either have that $\Sigma \subset \Gr(\ell, X_{\ker})$ or that the image of every $\varphi \in T_{\Sigma_{\ker},P}$ for smooth points $P \in \Sigma_{\ker}$ is $\lbrace 0 \rbrace$.
%Since we assumed that $\Sigma_{\ker}$ is not a point, we have $\Sigma \subset \Gr(\ell, X_{\ker})$.
%By Corollary~\ref{cor:dimStronglyIso}, the dimension of $\Sigma$ is at most $\dim(X_{\ker})-\ell$, which is strictly less than $\dim(\Sigma)-1$.
%This is clearly a contradiction.
%\end{enumerate}
Thus, we have shown that $\Sigma_{\ker}$ must be a point, i.e., an $(\ell-1)$-dimensional subspace $P \subset \PP^n$.
Every $L \in \Sigma$ contains $P$, so $\Sigma \subset \alpha(P)$.
\end{proof}

\paragraph{Acknowledgements}
We thank Peter Bürgisser, Emre Sertöz and Bernd Sturmfels for many helpful discussions. 
We are grateful to TU Berlin and MPI MIS in Leipzig for the hospitality in September/October 2017, where most ideas for this project were developed. 
We also thank the referee for useful comments concerning the exposition. 

Kathl\'en Kohn acknowledges partial support from
the Einstein Foundation Berlin.
\bibliographystyle{alpha}
\bibliography{literatur}

\end{document}